\documentclass[a4paper,headings = small, abstract]{scrartcl}
\usepackage{amsmath}
\usepackage{amscd,amsthm,array,bbm,hhline, enumerate,dsfont, booktabs,fancybox,calc,textcomp,xcolor,graphicx,bbm,xspace,nicefrac,url,arcs,listings,nameref,float}
\usepackage{subcaption}

\makeatletter
\newcommand{\leqnomode}{\tagsleft@true}
\newcommand{\reqnomode}{\tagsleft@false}
\makeatother

\usepackage[theoremfont]{newpxtext}
\usepackage[vvarbb, upint, bigdelims]{newpxmath}
\usepackage[scr=boondoxupr, scrscaled = 1.05, cal = euler, calscaled =1.05]{mathalpha}
\usepackage[scaled=0.95]{inconsolata}
\DeclareMathAlphabet{\mathsf}{OT1}{\sfdefault}{m}{n}
\SetMathAlphabet{\mathsf}{bold}{OT1}{\sfdefault}{b}{n}
\usepackage[utf8]{inputenc}
\usepackage[shortlabels]{enumitem}
\usepackage{etoolbox}
\usepackage[normalem]{ulem}
\usepackage{mathtools}
\usepackage{geometry}
\usepackage{float}

\usepackage{bm}
\usepackage{tikz}
\usepackage{comment}

\usepackage[outdir =./]{epstopdf}
\usepackage{xurl} 
\usepackage[citestyle = numeric-comp,
bibstyle = numeric, 
giveninits=true,
natbib = true, 
backend = biber,
maxnames=99,
minnames=1,
url=false,
doi=false]{biblatex}

\numberwithin{equation}{section}
\graphicspath{{./simulationen/}}
\usepackage{chngcntr}
\counterwithin{figure}{section}

\usepackage{xcolor}
\usepackage[colorlinks=true, allcolors=myteal]{hyperref}
\definecolor{WIMgreen}{RGB}{60 134 132}
\definecolor{red_pers}{RGB}{204 37 41}
\definecolor{UMblue}{RGB}{4 47 86}
\definecolor{myteal}{RGB}{0 123 137}
\definecolor{nd}{RGB}{0 0 0}
\definecolor{dartmouthgreen}{rgb}{0.05, 0.5, 0.06}\definecolor{cobalt}{rgb}{0.0, 0.28, 0.67}\definecolor{coolblack}{rgb}{0.0, 0.18, 0.39}
\definecolor{glaucous}{rgb}{0.38, 0.51, 0.71}\definecolor{hooker\'sgreen}{rgb}{0.0, 0.44, 0.0}\definecolor{lemonchiffon}{rgb}{1.0, 0.98, 0.8}\definecolor{oucrimsonred}{rgb}{0.6, 0.0, 0.0}\definecolor{radicalred}{rgb}{1.0, 0.21, 0.37}\definecolor{raspberry}{rgb}{0.89, 0.04, 0.36}\definecolor{royalazure}{rgb}{0.0, 0.22, 0.66}
\definecolor{dex}{RGB}{138 18 34}
\definecolor{cs}{rgb}{0.0, 0.44, 1.0}
\definecolor{Purple}{RGB}{103 58 183}

\theoremstyle{plain}
\newtheorem{theorem}{Theorem}[section]
\newtheorem{proposition}[theorem]{Proposition}
\newtheorem{lemma}[theorem]{Lemma}
\newtheorem{corollary}[theorem]{Corollary}

\theoremstyle{definition}

\theoremstyle{remark}
\newtheorem{remark}[theorem]{Remark}

\SetLabelAlign{center}{\hss#1\hss}

\def\bbeta{\boldsymbol{\beta}}

\def\tr{\operatorname{tr}}

\def\C{\mathbf{C}}

\def\A{\boldsymbol{A}}
\def\bSigma{\mathbf{\Sigma}}

\def\E{\mathbb{E}}

\def\M{\mathbf{M}}
\def\N{\mathbb{N}}

\def\R{\mathbb{R}}

\definecolor{RTGblue}{HTML}{003466}
\def\Z{\mathbf{Z}}

\def\Rr{\mathbf{R}}

\def\X{\mathbf{X}}
\def\Z{\mathbf{Z}}

\def\N{\mathbb{N}}
\def\P{\mathbb{P}}

\newcommand{\ep}{\varepsilon}

\newcommand{\1}{\mathds{1}}

\def\vec{\operatorname{vec}}
\newcommand{\e}{\mathrm{e}}
\newcommand{\md}{\mathrm{d}}

\newcommand{\mix}{\mathrm{mix}}

\renewcommand{\tilde}{\widetilde}%
\newcommand{\T}{\mathbb{T}}\renewcommand{\d}{\,\mathrm{d}}

\newcommand*\samethanks[1][\value{footnote}]{\footnotemark[#1]}
\setkomafont{sectioning}{\normalcolor\bfseries}
\setkomafont{author}{\large}
\setkomafont{date}{\normalsize}

\newlist{todolist}{itemize}{2}
\setlist[todolist]{label=$\square$}

\let\originalleft\left
\let\originalright\right
\renewcommand{\left}{\mathopen{}\mathclose\bgroup\originalleft}
\renewcommand{\right}{\aftergroup\egroup\originalright}

\definecolor{myteal}{RGB}{0 123 137}
\definecolor{radicalred}{rgb}{1.0, 0.21, 0.37}
\allowdisplaybreaks
	\bibliography{lasso}

\makeatother
\title{\fontsize{16}{19} \selectfont Sparse Estimation for High-Dimensional L{\'e}vy-driven Ornstein--Uhlenbeck Processes from Discrete Observations}
\author{Niklas Dexheimer\thanks{ University of Twente \newline Department of Applied Mathematics \newline Drienerlolaan 5, 7522 NB Enschede, The Netherlands.\newline Email: \href{mailto:n.dexheimer@utwente.nl}{n.dexheimer@utwente.nl}/\href{mailto:n.jeszka@utwente.nl}{n.jeszka@utwente.nl}
}
\qquad Natalia Jeszka\samethanks
}

\begin{document}
	\maketitle
	\begin{abstract}
We study high-dimensional drift estimation for L{\'e}vy-driven Ornstein--Uhlenbeck processes based on discrete observations. Assuming sparsity of the drift matrix, we analyze Lasso and Slope estimators constructed from approximate likelihoods and derive sharp nonasymptotic oracle inequalities. Our bounds disentangle the contributions of discretization error and stochastic fluctuations, and establish minimax optimal convergence rates under suitable choices of tuning parameters in a high-frequency regime. We further quantify the sample complexity required to attain these rates depending on the L{\'e}vy noise. The results extend the theory of high-dimensional statistics for stochastic processes to a substantially broader class of noise mechanisms, in particular pure jump processes. They also demonstrate that Lasso and Slope remain competitive for jump-driven systems, providing practical guidance for inference in applications where L{\'e}vy processes are a natural modeling choice.
	\end{abstract}

	\section{Introduction}\label{sec: intro}
 High-dimensional statistics is a central field of modern statistical research, with a particular emphasis on settings where the number of parameters significantly exceeds the sample size. One common assumption in this setting is sparsity of the underlying parameter. Traditional estimation techniques are generally inadequate in this regime, prompting the introduction of penalized estimators to overcome these limitations. The Lasso estimator \cite{tibshirani96} is one of the most widely studied examples. It is specifically tailored for sparse settings by adding a $L_1$ penalty to the estimation objective. A generalization of the Lasso estimator is given by the Slope estimator \cite{bog15}, which introduces additional weight parameters to the $L_1$ penalty. The behavior of these estimators is by now well understood in classical settings, such as sparse linear regression where minimax optimality has been verified in \cite{belets18}.  In recent years, growing interest has focused on extending high-dimensional statistical theory to continuous-time settings, especially diffusion processes \cite{gama19,cmp20,dex24,amorino2025sampling,degreg25}. A prominent class of models in this context are multivariate Ornstein--Uhlenbeck (OU) processes. These are $d$-dimensional stochastic processes $\X=(X_t)_{t\geq0},$ which satisfy a stochastic differential equation (SDE) of the following form
 \begin{equation}\label{eq: sde intro}
     \d X_t=-\A_0 X_t\d t+\d W_t,\quad t\geq0,  
 \end{equation}
 where $\A_0\in\R^{d\times d}$ is called the drift matrix and $(W_t)_{t\geq0}$ is a standard $d$-dimensional Brownian motion. Scalar OU processes are often used for modeling interest rates, which is known as the Vasicek model. Therefore, multivariate OU processes are natural candidates for modeling interbank lending \cite{carmona15,fouque13}. The interactions between banks are encoded in the drift matrix $\A_0$ in this setting and for a very high number of banks it is natural to assume sparsity of $\A_0$. A straightforward way to generalize this model is to incorporate random shocks, such as changes to the key interest rate, by replacing the Brownian motion with a general $d$-dimensional L{\'e}vy process. Other applications of L{\'e}vy-driven OU processes are for example in computational neuroscience, where they are used for modeling the postsynaptic membrane potential in biological neural networks, see Section 2.1 in \cite{dexheimer2025spiketimingdependent}. A particularly interesting point of this model is that the driving L{\'e}vy process is a pure jump process. From a theoretical point of view, extending the noise in \eqref{eq: sde intro} to general L{\'e}vy processes also broadens the class of possible limiting distributions of $\X$ from normal distributions to the class of self-decomposable distributions, for details see \cite{mas04,sato84}.

\paragraph{Main Contributions}\hspace{-6pt}
This work investigates estimation of the drift matrix $\A_0$ of a $d$-dimensional L{\'e}vy-driven OU process $\X$, based on $n\in\N$ equidistant discrete observations of $\X$ separated by $\Delta_n>0$. We are especially interested in the case where $\A_0$ is sparse, which is why we employ Lasso and Slope estimators. For this we define a localized and truncated likelihood respectively contrast function see Section \ref{subsec: est}. In summary, our main contributions are as follows. 
 \begin{enumerate}
 \item We show sharp oracle inequalities for the $L_2$-error of the estimators, see Theorem \ref{thm: main}. These disentangle the contribution of the discretization, truncation and stochastic fluctuations to the overall error.
    \item  The stochastic error of both estimators is shown to be of the order \[\frac{s\log\Big(\frac{\e d^2}{s}\Big)}{T}, \]
    where $s$ denotes the sparsity of $\A_0$ and $T=n\Delta_n$ the observation length. This is the minimax optimal rate of convergence under sparsity constraints if a continuous record of observations of $\X$ up to time $T$ is available (see Theorem 2.7 in \cite{dex24}). Additionally, the discretization error is bounded by $d^2\Delta_n^2,$ which improves previous results in the literature. Optimal choices of the truncation level, depending on the background driving L{\'e}vy process, are presented.    To the best of our knowledge, this is the first result guaranteeing minimax optimality for sparse regression with high-frequency discrete observations of a diffusion process.
    \item The results hold true under minimal assumptions on the background driving L{\'e}vy process, namely as soon as the BDLP admits a $p$-th moment with $p>2$. Our analysis in particular permits pure jump processes and anisotropic noise, which to the best of our knowledge has not been investigated before in high-dimensional statistics for stochastic processes.
    \item We also prove a novel matrix Bernstein-type concentration inequality for the localized empirical covariance matrix of $\X$ (see Proposition \ref{prop: REP disc}) to verify an analogue to the restricted eigenvalue property needed for the theoretical analysis of the Lasso estimator (see Sections 3.1 of \cite{cmp20,dex24}). This leads to a sample complexity which grows at most polynomial in terms of the sparsity $s$ depending on the behavior of the BDLP.
\end{enumerate}
\paragraph{Related Work}
 The same setting as in this paper was investigated in \cite{dex24} except for the highly idealized assumption of a continuous record of observations of $\X$ being available. The authors were able to show that the proposed estimators achieve the minimax optimal rate of convergence for suitably chosen tuning parameters. Nevertheless, the paper has multiple restrictive assumptions in addition to the unrealistic continuous observation scheme. First, \cite{dex24} assumes knowledge of the continuous martingale part of the observed L{\'e}vy-driven OU process, since this is needed in order to compute the likelihood function (see \cite{sorensen91}). However, even given continuous observations of $\X,$ said continuous martingale part is not easily identifiable as soon as the background driving L{\'e}vy process (BDLP) has infinite jump activity. Moreover, the estimators cannot be applied if the BDLP is a pure jump process since they are based on the likelihood function and consequently on the continuous martingale part, which does not exist for pure jump processes. The analysis of \cite{dex24} also relies on a sufficiently strong concentration of the empirical covariance matrix of $\X$ (see Assumption $(\mathcal{H})$ in \cite{dex24}), as is usual in the analysis of the Lasso estimator (for more details see Section \ref{subsec: cov}). However, in the presence of jumps only a very weak concentration is verified (see \cite[Proposition 4.2]{dex24}). In the case of a heavy-tailed L{\'e}vy measure of the BDLP this leads to a sample complexity growing exponentially in terms of the dimension $d,$ which is highly unsatisfactory. 
 
 The traditional way to transform the results of \cite{dex24} to a discrete observation scheme is to use a jump filtering approach in order to approximate the continuous martingale part in the likelihood function. This method was used in \cite{mai14} for maximum likelihood estimation of the drift parameter of a discretely observed scalar L{\'e}vy-driven OU process. However, as stated in Remark 3.4 in \cite{mai14}, there is ``no hope'' to recover the continuous martingale part as soon as the drift of the BDLP is non-zero, which excludes many types of anisotropic martingale noise. As discussed above, jump filtering is in addition also not possible for pure jump processes, which is why we use a different approach.
 
 The first paper to investigate high-dimensional estimation for discretely observed diffusion processes was \cite{amorino2025sampling}. Compared to our work, the authors investigate a more general class of parametric diffusion processes with a standard Brownian motion as noise. In addition, classical OU processes are investigated since those are not contained in the considered general framework. A detailed comparison of the results to our paper is given in Remark \ref{rem:1}; in short, through our detailed analysis of the OU case we improve both the upper bound on the stochastic error and on the discretization error. \cite{degreg25} investigates an adaptive Elastic-Net estimator for very general diffusion processes given discrete observations. The paper is not only interested in parametric estimation of the drift but also of the volatility. The estimator is defined as a penalized version of a given original estimator, which is the MLE in most cases. Due to the high generality of the paper, the authors are mostly interested in showing that the Elastic-Net estimator matches the performance of the original estimator. Another related article is given by \cite{wong20} which investigates sparse regression for heavy-tailed $\beta$-mixing time series. Indeed, under the imposed assumptions (see Assumption \ref{ass: ergodicity}) the L{\'e}vy-driven OU process $\X$ is $\beta$-mixing by the results of \cite{mas04}, and allowing general L{\'e}vy process as noise in \eqref{eq: sde intro} corresponds to allowing heavy tailed noise in the time series dynamics investigated in \cite{wong20}. Comparing our results to \cite{wong20} we improve the rate of convergence of the Lasso estimator and also allow for heavy-tailed noise which only admits a polynomial moment instead of sub-Weibull noise.
 \paragraph{Structure}
The paper is structured as follows. We start by introducing the mathematical framework, notation and estimators in Section \ref{sec: preliminaries}. Thereafter, we present our main results in Section \ref{sec: results}, whose proofs are contained in Section \ref{sec: proof}. Subsequently, we conduct a simulation study on synthetic data in Section \ref{sec: sim}. The paper closes with a conclusion and future outlook in Section \ref{sec: conclusion}. Appendix \ref{app} contains some useful auxiliary results.

\section{Preliminaries and Notation} \label{sec: preliminaries}
Before we rigorously introduce our mathematical model we fix some basic notation. For $x\in\R^d,a_0>0,$ we denote the open ball in $\R^d$ around $x$ with radius $a_0$ by $B(x,a_0),$ for $n\in\N$ we set $[n]\coloneq\{1,\ldots,n\},$ and for a set $S$ we denote its complement by $S^{\mathsf c}$. For a symmetric matrix $\M\in\R^{d\times d}$, we write $\lambda_{\max}(\M)$ and $\lambda_{\min}(\M)$ for the largest and the smallest eigenvalue of $\M$, and for any positive definite matrix $\M\in\R^{d\times d}$ we denote by $\kappa(\M)\coloneq\lambda_{\max}(\M)/\lambda_{\min}(\M)$ the condition number of $\M$. For general $\M\in\R^{d_1\times d_2}$, we define
\[
\|\M\|_p\coloneq \left(\sum_{1\le i\le d_1,1\le j\le d_2}|M_{ij}|^p\right)^{1/p}, \quad p\ge 1,
\]
and $\Vert\M\Vert_\infty\coloneq \max_{1 \leq i\leq d_1, 1 \leq j\leq d_2}\vert \M_{ij}\vert$. By a slight abuse of notation we denote the number of non-zero entries of the matrix $\M$ by $\Vert \M\Vert_0,$ i.e.\ $\Vert \M\Vert_0\coloneq \sum_{1\le i\le d_1,1\le j\le d_2} \1(M_{ij}\neq 0)$.
Furthermore we set $\Vert \M\Vert$ to be the spectral norm, i.e.\ $\Vert \M\Vert^2=\lambda_{\max}(\M^\top \M)$.

The set of all real $d\times d$ matrices with the real parts of all eigenvalues positive is denoted by $M_+(\R^d)$, i.e., $\M\in M_+(\R^d)$ if and only if $\|\e^{-t\M}\|\to0$ as $t\to\infty$. 

Given $\bbeta=(\beta_1,\ldots,\beta_d)\in\R^d$, we let $(\beta_1^\#,\ldots,\beta_d^\#)$ denote a nonincreasing rearrangement of $|\beta_1|,\ldots,|\beta_d|$, and set
\[
\|\bbeta\|_\star\coloneq \sum_{j=1}^d\sqrt{\log\left(\frac{2d}{j}\right)} \beta_j^\#.\]
Then by Proposition 1.2 in \cite{bog15} $\Vert \cdot\Vert_\star$ defines a norm on $\R^d.$
For $\M\in \R^{d_1\times d_2}$, we set (by a slight abuse of notation) $\Vert \M\Vert_\star\coloneq \Vert \vec(\M)\Vert_\star,$ where $\vec$ denotes the usual vectorization operator. Then it holds
\begin{equation}\label{def:normstar}
	\Vert \M\Vert_\star=\sum_{j=1}^{d_1d_2}\vec(\M)^\#_j\sqrt{\log\left(\frac{2d_1d_2}{j}\right)}. 
\end{equation}

\subsection{Mathematical Model} \label{subsec: mat mod}

Throughout the paper, we let $\Z=(Z_t)_{t\ge0}$ denote a $d$-dimensional L{\'e}vy process defined on a filtered probability space $(\Omega,\mathcal F,(\mathcal F_t),\P)$ and adapted to the filtration $(\mathcal F_t)_{t\ge0}$. We will consider the $d$-dimensional L{\'e}vy-driven Ornstein--Uhlenbeck (OU) process, defined as a strong solution $\X=(X_t)_{t\ge0}$ to the stochastic differential equation
\begin{equation}\label{sde:ou}
\d X_t=-\A_0 X_t\d t+\mathrm{d} Z_t,\quad t>0,\qquad X_0\sim\pi,
\end{equation}
where $\A_0\in\R^{d\times d}$ is the drift parameter and $\pi$ the initial distribution of $\X$. The process $\Z$ is referred to as the background driving L{\'e}vy process (BDLP). We assume that the initial condition $X_0$ is independent of $\Z$.

An explicit representation of the solution to the SDE \eqref{sde:ou} is given by
\begin{equation}\label{eq: ou explicit}
X_t=\e^{-t\A_0}X_0+\int_0^t\e^{-(t-s)\A_0}\d Z_s,\quad t>0,
\end{equation}
which follows from It{\^o}'s formula (see e.g.\ equations (1.1) and (1.2) in \cite{mas04}).

Denote by $(b,\C=\bSigma\bSigma^\top,\nu)$ the generating triplet of $\Z$, i.e., $b\in\R^d$, $\C=\bSigma\bSigma^\top\in\R^{d\times d}$ is a symmetric, positive semidefinite matrix and $\nu$ is a Lévy measure, i.e.\ a $\sigma$-finite measure on $\R^d$ satisfying 
\[\nu(\{\boldsymbol 0\})=0\quad\text{ and }\quad \int_{\R^d}\min\{1,\|z\|^2\}\,\nu(\mathrm{d} z)<\infty.\]
The L{\'e}vy--It\={o} decomposition (see e.g.\ Theorem 2.4.16 in \cite{applebaum09}) then implies
\[
Z_t=bt+\Sigma W_t+\int_0^t\int_{\Vert z\Vert\geq1} zN(\md s,\md z)+\int_0^t\int_{\Vert z\Vert<1} z\tilde{N}(\md s,\md z), \quad t\geq 0,
\]
where $(W_s)_{s\geq 0}$ is a $d$-dimensional Brownian motion, $N$ is a Poisson random measure on $[0,\infty)\times \R^d$ with intensity measure given by $\mathbf{\lambda}\otimes \nu$, and $\tilde{N}$ denotes its compensated counterpart. 

Regarding the BDLP $\Z$, or equivalently its L{\'e}vy measure  $\nu$, we distinguish between the following cases.

\begin{itemize}
    \item The BDLP $\Z$ is continuous, which happens if and only if $\nu(\R^d)=0,$ i.e\ if $\Z$ is a Brownian motion.
    
    \item  The process $\Z$ has bounded jumps, i.e., there exists $a_0>0,$ such that 
\begin{equation*}
    \nu(B(0,a_0)^\mathsf{c})=0.
\end{equation*}

\item For $p\geq 2,$ the process $\Z,$ respectively its L{\'e}vy measure  $\nu$,  admits a $p$-th moment, i.e., 
\[\nu_p\coloneq \int \Vert z\Vert^p\,\nu(\mathrm{d} z)<\infty.\]
\item Additionally, we say that $\Z,$ respectively $\nu,$ is sub-Weibull with parameter $\alpha>0,$ if $\Z$ admits a second moment and if there exists a constant $c_\alpha>0,$ such that 
\[\int \Vert x\Vert^2\exp(c_\alpha \Vert x\Vert^\alpha)\,\nu(\mathrm{d} x)\leq 2\nu_2.\] 
By continuity such a constant always exists if $\Vert x\Vert^2\exp(c\Vert x\Vert^\alpha)$ is $\nu$-integrable for some $c>0.$ Compared to the usual sub-Gaussian norm for $\alpha=2$ (see Definition 2.5.6 in \cite{vers18}), the term $\Vert x\Vert^2$ is added in order to ensure integrability around the origin.
\end{itemize}
\subsection{The Lasso and Slope Estimators}\label{subsec: est}
In the following, our goal will be to estimate the drift parameter $\A_0\in\R^{d\times d}$ of the L{\'e}vy-driven OU process $\X$ defined in \eqref{sde:ou} based on discrete-time observations of the process. More specifically, for $n\in\N,T>0,$ we assume that observations $\left( X_{t_i}\right)_{i=0, \ldots, n}$ of $\X$ are available  where $0=t_0<t_1<\ldots<t_n=T.$ For simplicity, we assume that the observation times $(t_i)_{i\in \{0,\ldots,n\}}$ are equidistant, i.e.\ it holds $t_i = i T / n,i=0,\ldots,n$, and we denote the inter observation distance by $\Delta_n = T/ n$.

One of the classical approaches in parametric statistics is to apply the maximum likelihood estimator (MLE). Given a continuous record of observations of $\X$ up to time $T>0$ the negative log-likelihood function $\mathcal{L}_T\colon \R^{d\times d}\to \R$ is, under suitable assumptions on the BDLP $\Z$, explicitly computable by an application of Girsanov's theorem (see \cite{sorensen91})\begin{equation*}
    \mathcal{L}_T(\A) = \frac 1 T\int_0^T (\A X_{s-})^\top \d X_s^c + \frac{1}{2T} \int_0^T \| \A X_{s-} \|^2 \d s,\quad \A\in\R^{d\times d},
\end{equation*} 
where $(X^c_t)_{t\in[0,T]}$ denotes the continuous martingale part of $\X$ and $X_{s-}$ the left limit of $\X$ at time $s\in[0,T]$. A natural version of the likelihood function based on discrete observations is given by 
\begin{equation}\label{eq: disc like untrunc}
    \frac{1}{T}\sum_{i=1}^n \left(\A X_{t_{i-1}}\right)^\top \Delta X_i   +\frac{\Delta_n}{2T}\sum_{i=1}^n \Vert \A X_{t_{i-1}}\Vert^2
\end{equation}
where
\[
\Delta X_i = X_{t_{i}} - X_{t_{i-1}}.
\]
A crucial difference from the true likelihood function is that the continuous martingale part is replaced by the increment of the original process $\X$. Straightforward computations show that minimizing \eqref{eq: disc like untrunc} with respect to $\A$ is equivalent to minimizing the contrast function
\begin{equation}\label{eq: contrast untrunc} 
\frac{1}{T}\sum_{i=1}^n \| \Delta X_i - \Delta_n \A X_{t_{i-1}} \|^2,
\end{equation}
i.e., both are minimized by the same value of the parameter $\A$. 

As we are interested in a high-dimensional regime with possibly sparse drift matrix $\A_0$ we intend to use a penalized version of the likelihood as minimization  objective for our estimation procedure, as for example in the classical Lasso estimator \cite{tibshirani96}. Given a continuous record of observations of $\X$ and knowledge of the continuous martingale part, \cite{dex24} used penalized versions of $\mathcal{L}_T$ to define Lasso (with $\Vert\cdot\Vert_1$ penalization) and Slope (with $\Vert\cdot\Vert_\star$ penalization) estimators of the drift matrix of $\X$. Based on discrete observations of a continuous OU process, i.e.\ where the BDLP $\Z$ is a standard Brownian motion, \cite{amorino2025sampling} defined a Lasso estimator as minimizer of the contrast function  \eqref{eq: contrast untrunc} with $\Vert\cdot\Vert_1$ penalization. 
However, this approach is not suitable for the general L{\'e}vy-driven case, which will be illustrated below. We therefore propose the following pseudo-likelihood function
\begin{equation}
    \label{eq: like disc}
    \mathcal{L}^D_n(\A)\coloneq \frac{1}{T}\sum_{i=1}^n \left(\A X_{t_{i-1}}\right)^\top \Delta X_i \1_B\left(X_{t_{i-1}}\right) \1_{\{\Vert \Delta X_i \Vert< \eta\}} +\frac{\Delta_n}{2T}\sum_{i=1}^n \Vert \A X_{t_{i-1}}\Vert^2 \1_B\left(X_{t_{i-1}}\right)\1_{\{\Vert \Delta X_i \Vert< \eta\}},
\end{equation} where $B \subseteq \R^d$ is a bounded set with radius $b\coloneq\operatorname{rad}(B)<\infty$, and $\eta > 0$ is referred to as truncation level. Canonical choices of $B$ are discussed after Assumption \ref{ass: ergodicity}, explicit values of $\eta$ depending on the BDLP $\Z$ are given in Table \ref{table:1}.
As above, minimizing the pseudo-likelihood $\mathcal{L}^D_n(\A)$ corresponds to minimizing the localized and truncated contrast function 
\begin{equation*}
    R_T(\A)\coloneq \frac{1}{T}\sum_{i=1}^n \| \Delta X_i - \Delta_n \A X_{t_{i-1}} \|^2 \1_B\left(X_{t_{i-1}}\right)\1_{\{\Vert \Delta X_i \Vert< \eta\}}.
\end{equation*}

The first reason for defining the pseudo-likelihood $\mathcal{L}^D_n$ is that the continuous martingale part is replaced by the increment of the original process $\X$ in the discretized negative log-likelihood function \eqref{eq: disc like untrunc}. This is only well-motivated if the BDLP $\Z$ is a Brownian motion since the continuous martingale part of $\X$ is then $\X$ itself. The classical approach to overcome this issue is to try to recover the continuous martingale part by jump filtering as applied for maximum likelihood estimation for scalar Lévy-driven OU processes in \cite{mai14}. More specifically, summands of the contrast function \eqref{eq: contrast untrunc} are excluded if the norm of the corresponding increment exceeds the threshold $\eta>0,$ with $\eta$ tending to $0$ as the distance between observations $\Delta_n$ tends to $0$. However, as explained in the introduction, this excludes many types of Lévy process, in particular pure jump processes. To filter out excessively large jumps of $\Z$ we still disregard excessively large increments, but instead of letting the truncation level $\eta$ tend to $0$ with $\Delta_n$ we let it grow together with the observation length $T$. This also leads to fewer observations being ignored, which is natural in practical applications. 

Another change we make to the contrast function \eqref{eq: contrast untrunc} is to disregard excessively large observations of $\X$. This is motivated by the fact that the form of the contrast function stems from the sub-Gaussianity of the continuous martingale part. In particular, it resembles the classical negative log-likelihood function for the mean of a normal distribution. It is well-known that such $L_2$-criteria are not suitable in heavy-tailed settings, such as the general Lévy-driven case, since they put too much emphasis on extreme outliers. 

 Based on the pseudo-likelihood $\mathcal{L}^D_n$ we define the Lasso estimator $\hat{\A}_{\mathsf{L}}$ with tuning parameter $\lambda_{\mathsf{L}}>0$ as
\begin{align}\label{def: lasso discrete}
    \hat{\A}_{\mathsf{L}} \in \operatorname{argmin}_{\A\in\R^{d\times d}} (\mathcal{L}^D_n(\A)+\lambda_{\mathsf{L}}\Vert \A\Vert_1),
\end{align}
and similarly introduce the Slope estimator $\hat{\A}_{\mathsf{S}}$ with tuning parameter $\lambda_{\mathsf{S}}>0$ as 
\begin{equation}\label{def: slope discrete}
	\hat{\A}_{\mathsf{S}}\in \operatorname{argmin}_{\A\in\R^{d\times d}}(\mathcal{L}^D_n(\A)+\lambda_{\mathsf{S}}\|\A\|_\star),
\end{equation}
where $\Vert \cdot\Vert_\star$ is defined in \eqref{def:normstar}.
Our analysis requires a suitably stable behavior of $\X,$ in particular the existence of an invariant distribution $\mu$. Theorems 4.1 and 4.2 in \cite{sato84} or Proposition 2.2 in \cite{mas04} show that $\X$ admits an invariant distribution $\mu$ as soon as $\A_0\in M_+(\R^d)$ and $\E[(1\vee\log(\Vert Z_1\Vert))] < \infty$. In case an invariant distribution $\mu$ exists we denote its truncated covariance matrix for a set $B\subseteq\R^d$ by \[\C_{\infty}(B) \coloneq \E [Y Y^\top \1_B(Y)],\quad \textrm{where}\quad Y\sim\mu,\]
and denote its true covariance matrix by $\C_{\infty}\coloneq \C_{\infty}(\R^d).$ Throughout the remainder of the paper we impose the following assumption on the considered OU process, which implies sufficient stability of $\X$ and identifiability of $\A_0$.

\begin{enumerate}[$(\mathcal{A}_0$)]
	\item \label{ass: ergodicity}
	 $\A_0\in M_+(\R^d)$, the BDLP $\Z$ is a square-integrable martingale and $\lambda_{\min}(\C_\infty(B))>0$. 
	Moreover, the initial distribution coincides with the invariant distribution $\mu$, i.e., $\pi=\mu,$ so that $\X$ is stationary.
\end{enumerate}
 Assumption \ref{ass: ergodicity} is well-posed by the aforementioned results of \cite{sato84,mas04}. In fact, \cite{mas04} even shows that $\X$ is exponentially ergodic as soon as $\A_0\in M_+(\R^d)$ and $\Z$ admits a $p$-th moment for some $p>0$, which is fulfilled under \ref{ass: ergodicity}. The stationarity assumption is therefore only for convenience and can be circumvented in practice by allowing a large enough burn-in, for more details see e.g.\ Lemma 4.2 in \cite{chris24}. Under \ref{ass: ergodicity} also the matrix 
 \begin{equation}\label{eq: def nu mat}
      \boldsymbol{\nu}_2\coloneq \int zz^\top\,\nu(\md z)
 \end{equation}
 which will be important in our statements, is well-defined. We want to emphasize that we assume positive definiteness of $\C_\infty(B)$ rather than $\C$. This assumption is weaker than requiring $\lambda_{\min}(\C)>0,$ since $\lambda_{\min}(\C)>0,$ implies $\lambda_{\min}(\C_\infty(B))>0$ for any $B$ with Lebesgue measure larger than $0$ by the results of \cite{sato84,mas04}. This condition also allows $\Z$ to be a pure jump process, contrasting the assumptions of \cite{dex24}. Said assumption acts as an identifiability condition for $\A_0$ since $\C_\infty(B)$ can be interpreted as the Fisher information in the given model.

Additionally to the tuning parameters and the truncation level $\eta,$ our estimators also require us to choose the set $B\subset \R^d$. For this we recall the so-called thin shell phenomenon for high-dimensional random variables, see e.g. Section 3.1 in \cite{vers18}. This states that the squared norm of high-dimensional random vectors concentrates strongly around the trace of their covariance matrix. Consequently, in order to have a sufficient amount of observations in the set $B,$ its radius $b$ should grow with the dimension like $\sqrt{d}.$ The canonical choice for this is $B=B(0,c\sqrt{d}),$ the open ball with radius $b=c\sqrt{d},$ where $c>0$ is a dimension independent constant.

For the presentation of our main results we also introduce the following quantities. For any stochastic process $(Y_t)_{t\in[0,T]},$ and $B\subseteq\R^d,\eta>0,  T>0,$  we define the empirical norms 
\begin{align*}
   \Vert Y \Vert^2_{l^2_n(B,X)} &\coloneq \frac{\Delta_n}{T} \sum_{i=1}^n \Vert Y_{t_{i-1}} \Vert^2 \1_B\left(X_{t_{i-1}}\right),
   \\
   \Vert Y \Vert^2_{l^2_n(B,X,\eta)} &\coloneq \frac{\Delta_n}{T} \sum_{i=1}^n \Vert Y_{t_{i-1}} \Vert^2 \1_B\left(X_{t_{i-1}}\right)\1_{\{\Vert \Delta X_i \Vert< \eta\}}.
\end{align*}
Moreover, we denote the truncated analogues of the empirical covariance matrix of $\X$ by
\begin{align*}
\hat{\C}_n(B) &\coloneq\frac{\Delta_n}{T} \sum_{i=1}^n X_{t_{i-1}} X_{t_{i-1}}^\top \1_B\left(X_{t_{i-1}}\right),
\\
\hat{\C}^\eta_n(B) &\coloneq\frac{\Delta_n}{T} \sum_{i=1}^n X_{t_{i-1}} X_{t_{i-1}}^\top \1_B\left(X_{t_{i-1}}\right)\1_{\{\Vert \Delta X_i \Vert< \eta\}}.
\end{align*}
Lastly, for $i\in[n]$, we also introduce
\begin{equation}\label{eq: def z tilde}
  \Delta\tilde{Z}_i\coloneq  \int_{t_{i-1}}^{t_{i}}\exp(-(t_{i}-u)\A_0)\d Z_u .
\end{equation}

\section{Main Results}\label{sec: results}
The main result of this paper is given by the following oracle inequalities for the Lasso and Slope estimators. Its proof together with all necessary auxiliary results can be found in Section \ref{sec: proof}. The statements require an observation horizon of length $T_\star(\ep_0,\eta),$ which acts as the sample complexity of the given problem. It is defined in detail in Section \ref{sec: proof} (see Proposition \ref{prop: qt}). Naturally, the sample complexity depends on the confidence level $\ep_0\in(0,1),$ but also on the truncation parameter $\eta>0$. Since it is beneficial to choose $\eta$ differently depending on the tails of the L{\'e}vy measure $\nu$, we choose to first give a general statement, suppressing the dependence on $\eta$. An explicit statement for the cases introduced in Section \ref{subsec: mat mod} can be found in Corollary \ref{cor: main}.
\begin{theorem}\label{thm: main}
Assume \ref{ass: ergodicity}. Let 
$\ep_0\in(0,1)$ be given, assume $b\geq1,\eta\geq 2b(\exp(\Delta_n\Vert\A_0\Vert)-1)$ and define
\[\gamma(\Delta_n)\coloneq\lambda_{\max}(\C+\boldsymbol{\nu}_2)(\lambda_{\max}(\C_\infty(B))\lor 1)\exp(\Delta_n\Vert\A_0\Vert),\]
where $\boldsymbol{\nu}_2$ is defined in \eqref{eq: def nu mat}. Then, there exists a universal constant $c_{\star}>0$ such that for any $T\geq T_\star(\ep_0,\eta)$ the following statements hold true.
\begin{enumerate}
     \item[a)] Let $\hat{\A}_{\mathsf{L}}$ be the Lasso estimator with tuning parameter $\lambda_{\mathsf{L}}$ satisfying
     \begin{equation}\label{eq:lassolambda}
           \lambda_{\mathsf{L}}\geq 2c_\star\Big(\frac{\log(2\e d^2/s)\gamma(\Delta_n)}{T}\Big)^{1/2}.  
     \end{equation}
 Then for any $\ep\in(0,1)$ and any $s$-sparse matrix $\A\in\R^{d\times d}$ the bound
    \begin{align*}
                 &\Vert(\hat{\A}_{\mathsf{L}}-\A_0)X\|_{l^2_n(B,X,\eta)}^2 +\lambda_{\mathsf{L}}\Vert\hat{\A}_{\mathsf{L}}-\A\Vert_1\\
    &\leq\|(\A-\A_0) X\|_{l^2_n(B,X,\eta)}^2 +3\Delta_n^2\lambda_{\max}(\C_\infty(B))\Vert\A_0\Vert_{2}^4\Big(\frac{1}{2}+\frac{1}{6}\exp(\Delta_n\Vert\A_0\Vert_{2})\Delta_n\Vert\A_0\Vert_{2}\Big)^2 
    \\
&\quad +\frac{3}{\Delta_n} \lambda_{\max}(\C+\boldsymbol{\nu}_2)\exp(\Delta_n\Vert\A_0\Vert)\P(\Vert \Delta  \tilde{Z}_1\Vert\geq \eta/2 )  
    \\&\quad+ \frac{48\lambda^2_{\mathsf{L}}}{\lambda_{\min}(\C_\infty(B))\P(\Vert\Delta\tilde{Z}_1\Vert\leq\eta/2)}\Bigg(s\lor\frac{\log(2/\ep)}{\log(2\e d^2/s)}\Bigg),
    \end{align*}
    holds true with probability larger than $1-\ep_0-\ep$.
    \item[b)] Let $\hat{\A}_{\mathsf{S}}$ be the Slope estimator with tuning parameter $\lambda_{\mathsf{S}}$ satisfying
    \begin{equation}\label{eq:slopelambda}
        \lambda_{\mathsf{S}}\geq 2c_\star\Big(\frac{\gamma(\Delta_n)}{T}\Big)^{1/2}. 
    \end{equation}
    Then for any $\ep\in(0,1)$ and any $s$-sparse matrix $\A\in\R^{d\times d}$ the bound
    \begin{align*}
                 &\Vert(\hat{\A}_{\mathsf{S}}-\A_0)X\|_{l^2_n(B,X,\eta)}^2 +\lambda_{\mathsf{S}}\Vert\hat{\A}_{\mathsf{S}}-\A\Vert_\star\\
    &\leq\|(\A-\A_0) X\|_{l^2_n(B,X,\eta)}^2 +3\Delta_n^2\lambda_{\max}(\C_\infty(B))\Vert\A_0\Vert_{2}^4\Big(\frac{1}{2}+\frac{1}{6}\exp(\Delta_n\Vert\A_0\Vert_{2})\Delta_n\Vert\A_0\Vert_{2}\Big)^2 
    \\
&\quad +\frac{3}{\Delta_n} \lambda_{\max}(\C+\boldsymbol{\nu}_2)\exp(\Delta_n\Vert\A_0\Vert)\P(\Vert \Delta  \tilde{Z}_1\Vert\geq \eta/2 )  
    \\&\quad+ \frac{48\lambda^2_{\mathsf{S}}}{\lambda_{\min}(\C_\infty(B))\P(\Vert\Delta\tilde{Z}_1\Vert\leq\eta/2)}\Bigg(s\log\Big(\frac{2\e d^2}{s}\Big)\lor\log(2/\ep)\Bigg),
    \end{align*}
    holds true with probability larger than $1-\ep_0-\ep$.
\end{enumerate}
\end{theorem}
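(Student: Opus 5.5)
The argument follows the standard deterministic route for penalized least squares (Bellec--Lecu\'e--Tsybakov, and Section 3 of \cite{dex24}): prove the oracle inequality on a good event, then show that event has probability at least $1-\ep_0-\ep$. Write $w_i\coloneq\1_B(X_{t_{i-1}})\1_{\{\Vert\Delta X_i\Vert<\eta\}}$. From \eqref{eq: ou explicit} we get
\[
\Delta X_i=\Delta_n(-\A_0X_{t_{i-1}})+(\e^{-\Delta_n\A_0}-I+\Delta_n\A_0)X_{t_{i-1}}+\Delta\tilde Z_i,
\]
with the middle term a discretization remainder $D_i$. The Taylor bound $\Vert \e^{-\Delta_n\A_0}-I+\Delta_n\A_0\Vert\le\Delta_n^2\Vert\A_0\Vert^2(\tfrac12+\tfrac16\e^{\Delta_n\Vert\A_0\Vert}\Delta_n\Vert\A_0\Vert)$ produces the constant appearing in the second term of the theorem.

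\textbf{Step 1: first-order optimality.} Since the Lasso/Slope objective is convex, the optimality condition for $\hat\A$ compared against an arbitrary $s$-sparse $\A$ gives the usual inequality. It bounds $2\Vert(\hat\A-\A_0)X\Vert^2_{l^2_n(B,X,\eta)}$ by $\Vert(\A-\A_0)X\Vert^2-\Vert(\hat\A-\A)X\Vert^2$ plus the following:
\begin{itemize}
\item a martingale-type term $\frac2T\langle \hat\A-\A,\sum_i w_i\Delta\tilde Z_iX_{t_{i-1}}^\top\rangle$;
\item a discretization term $\frac2T\sum_i w_i\langle(\hat\A-\A)X_{t_{i-1}},D_i\rangle$;
\item the penalty difference $\lambda(\Vert\A\Vert-\Vert\hat\A\Vert)$.
\end{itemize}
The discretization term is handled by Cauchy--Schwarz and Young's inequality. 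Absorbing part of $\Vert(\hat\A-\A)X\Vert^2$ leaves $\frac{\Delta_n}{T}\sum_i\Vert D_i\Vert^2/\Delta_n^2\le \Delta_n^2\Vert\A_0\Vert^4(\cdots)^2\,\lambda_{\max}(\hat\C_n(B))$. On the good event, $\lambda_{\max}(\hat\C_n(B))$ is at most about $\lambda_{\max}(\C_\infty(B))$; the deviation is controlled by Proposition \ref{prop: qt}.

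\textbf{Step 2: the truncated noise term.} The product $w_i\Delta\tilde Z_i$ is not a martingale difference, because the indicator $\1_{\{\Vert\Delta X_i\Vert<\eta\}}$ depends on $\Delta\tilde Z_i$. I will split it as
\[
w_i\Delta\tilde Z_i=\1_B(X_{t_{i-1}})\Delta\tilde Z_i-\1_B(X_{t_{i-1}})\Delta\tilde Z_i\1_{\{\Vert\Delta X_i\Vert\ge\eta\}}.
\]
The first piece is a genuine martingale difference sequence, since $\Z$ is a square-integrable martingale under \ref{ass: ergodicity}. On $B$, $\Vert(\e^{-\Delta_n\A_0}-I)X_{t_{i-1}}\Vert\le b(\e^{\Delta_n\Vert\A_0\Vert}-1)\le\eta/2$ by the assumption on $\eta$, so $\{\Vert\Delta X_i\Vert\ge\eta\}\subseteq\{\Vert\Delta\tilde Z_i\Vert\ge\eta/2\}$. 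The second piece is therefore bounded via Cauchy--Schwarz and Young by $\frac{1}{T}\sum_i\Vert\Delta\tilde Z_i\Vert^2\1\{\Vert\Delta\tilde Z_i\Vert\ge\eta/2\}$. Using stationarity and a Markov/expectation bound, this is of order $\frac1{\Delta_n}\E[\Vert\Delta\tilde Z_1\Vert^2\1\{\cdots\}]$. Combining with $\E\Vert\Delta\tilde Z_1\Vert^2\lesssim\Delta_n\lambda_{\max}(\C+\boldsymbol\nu_2)\e^{\Delta_n\Vert\A_0\Vert}$ should produce the third term, possibly via a more careful Cauchy--Schwarz or by including this in the event defining $T_\star$. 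For the martingale piece, I will use a sub-Gaussian/Bernstein-type bound for the weighted sum of the coordinates $\frac1T\sum_i\1_B(X_{t_{i-1}})(\Delta\tilde Z_i)_kX_{t_{i-1},l}$. The quadratic variation is bounded by $\lambda_{\max}(\C+\boldsymbol\nu_2)\e^{\Delta_n\Vert\A_0\Vert}\lambda_{\max}(\hat\C_n(B))T^{-1}$, which is the quantity $\gamma(\Delta_n)/T$. Feeding these coordinate bounds into the Bellec--Lecu\'e--Tsybakov argument bounds the noise term by $c_\star\sqrt{\gamma/T}\,$ times $\Vert\cdot\Vert_\star$ (or times $\sqrt{\log(2\e d^2/s)}\Vert\cdot\Vert_1$ plus a $\sqrt{\log(2/\ep)}\Vert\cdot\Vert_2$ term) with probability $1-\ep$. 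The choices \eqref{eq:lassolambda} and \eqref{eq:slopelambda} then let the penalty absorb half of this.

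\textbf{Step 3: restricted eigenvalue and conclusion.} The standard cone argument leaves a term $\lambda\sqrt s\Vert\hat\A-\A\Vert_2$, which must be converted into the empirical norm $\Vert(\hat\A-\A)X\Vert_{l^2_n(B,X,\eta)}$. For this I need a lower bound
\[
\lambda_{\min}(\hat\C^\eta_n(B))\ge\tfrac12\lambda_{\min}(\C_\infty(B))\P(\Vert\Delta\tilde Z_1\Vert\le\eta/2),
\]
at least on the relevant cone, holding with probability $1-\ep_0$ for $T\ge T_\star$. This is what Proposition \ref{prop: qt} (via Proposition \ref{prop: REP disc}) provides. The factor $\P(\Vert\Delta\tilde Z_1\Vert\le\eta/2)$ arises because $\Delta\tilde Z_i$ is independent of $X_{t_{i-1}}$ and, on $B$, $\{\Vert\Delta\tilde Z_i\Vert<\eta/2\}\subseteq\{\Vert\Delta X_i\Vert<\eta\}$. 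Young's inequality then yields the last term, $48\lambda^2 s/(\lambda_{\min}\P(\cdot))$ with constant 48 after bookkeeping. Taking a union bound over the events gives probability $1-\ep_0-\ep$.

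The main obstacle is Step 2: the truncation destroys the martingale structure, so the bias from truncated large increments has to be controlled sharply enough to give exactly the $\frac{1}{\Delta_n}\P(\Vert\Delta\tilde Z_1\Vert\ge\eta/2)$ term. The concentration inputs are taken from the propositions defining $T_\star$.
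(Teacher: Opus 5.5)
Your Steps 1 and 3 match the paper. The basic inequality comes from the subdifferential, the discretization term gets a deterministic Taylor/Young bound, and the lower eigenvalue bound for $\hat{\C}^\eta_n(B)$ is part of the event $\mathfrak{C}\subseteq Q_T$; it comes from Proposition \ref{prop:rep disc trunc} via matrix Freedman, with constant $\tfrac14$, which is what produces the $48$. The genuine gap is Step 2, exactly where you hedge, and it has two parts.

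\textbf{The bias.} Your split $w_i\Delta\tilde{Z}_i=\1_B(X_{t_{i-1}})\Delta\tilde{Z}_i-\1_B(X_{t_{i-1}})\Delta\tilde{Z}_i\1_{\{\Vert\Delta X_i\Vert\geq\eta\}}$ makes the truncation bias a random, non-predictable sum. Since $\M=\A-\hat{\A}$ is data dependent, you can only bound it pathwise by Cauchy--Schwarz and Young. The Young step must carry the weight $\Delta_n$ to be absorbed into $\Vert\cdot\Vert_{l^2_n}$, so you are left with $\frac{c}{T\Delta_n}\sum_i\Vert\Delta\tilde{Z}_i\Vert^2\1_{\{\Vert\Delta\tilde{Z}_i\Vert\geq\eta/2\}}$. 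This is controllable only through Markov's inequality, which costs a factor $1/\ep$ absent from the theorem. Worse, its mean satisfies
\[
\frac{c}{\Delta_n^{2}}\E\big[\Vert\Delta\tilde{Z}_1\Vert^2\1_{\{\Vert\Delta\tilde{Z}_1\Vert\geq\eta/2\}}\big]\geq\frac{c\,\eta^2}{4\Delta_n^{2}}\P\big(\Vert\Delta\tilde{Z}_1\Vert\geq\eta/2\big).
\]
This exceeds the third term of the theorem by a factor of order $\eta^2/(\Delta_n\lambda_{\max}(\C+\boldsymbol{\nu}_2))$. That factor is large for any $\eta$ making the truncation error small, for instance of order $d\log(T)$ in the Brownian case of Table \ref{table:1}. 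Also, $\E\Vert\Delta\tilde{Z}_1\Vert^2$ is of order $\Delta_n\tr(\C+\boldsymbol{\nu}_2)$, not $\Delta_n\lambda_{\max}(\C+\boldsymbol{\nu}_2)$.

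\textbf{The martingale.} Your martingale part keeps the untruncated increments, which have only second moments under \ref{ass: ergodicity}. A bound on the predictable quadratic variation does not give sub-Gaussian or Bernstein tails here. Freedman/Bernstein need bounded increments. The inequality of \cite{vanzanten01} additionally needs the realized quadratic variation $\sup_{\Vert\M\Vert_2=1}\sum_i((\M X_{t_{i-1}})^\top\Delta\tilde{Z}_i)^2\1_B(X_{t_{i-1}})$ to be $O(\gamma(\Delta_n)T)$. That supremum is at least $\max_i\Vert X_{t_{i-1}}\Vert^2\Vert\Delta\tilde{Z}_i\Vert^2\1_B(X_{t_{i-1}})$, so it is governed by the tails of $\nu$. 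It is not covered by Proposition \ref{prop: qt}, which concerns the truncated martingale.

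\textbf{The missing idea.} Compensate the truncated increment. Since $\E[\Delta\tilde{Z}_i\vert\mathcal{F}_{t_{i-1}}]=0$,
\[
\Delta\tilde{Z}_i\1_{\{\Vert\Delta X_i\Vert<\eta\}}=\Delta\bar{Z}_i-\E\big[\Delta\tilde{Z}_i\1_{\{\Vert\Delta X_i\Vert\geq\eta\}}\,\big\vert\,\mathcal{F}_{t_{i-1}}\big],
\]
with $\Delta\bar{Z}_i$ as in the paper. This single choice fixes both problems.

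First, the bias vector $v_i=\E[\Delta\tilde{Z}_i\1_{\{\Vert\Delta X_i\Vert\geq\eta\}}\1_B(X_{t_{i-1}})\vert\mathcal{F}_{t_{i-1}}]$ is predictable and does not depend on $\M$. Conditional Cauchy--Schwarz in the direction $v_i/\Vert v_i\Vert$, together with Lemmas \ref{lemma: ou jumps, levy jumps} and \ref{lemma: ew z}, gives $\Vert v_i\Vert^2\leq\Delta_n\lambda_{\max}(\C+\boldsymbol{\nu}_2)\e^{\Delta_n\Vert\A_0\Vert}\P(\Vert\Delta\tilde{Z}_1\Vert\geq\eta_-)$. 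This holds deterministically and uniformly in $\M$; it is Lemma \ref{lemma: st decomp}, and it yields exactly the third term.

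Second, on $\{X_{t_{i-1}}\in B\}$ one has $\Vert\Delta\bar{Z}_i\Vert\leq 2\eta_+$. This boundedness lets Freedman's inequality and mixed-tail chaining control $\sup_{\M}\sum_i\xi_i^2(\M)$ on $Q_T$ (Proposition \ref{prop: qt}; this is where $\eta_+^2$ enters $T_\star(\ep_0,\eta)$). On $Q_T$, \cite{vanzanten01} then gives sub-Gaussian increments of $\M\mapsto\tilde{\mathcal{S}}(\M)$ in every direction, not merely coordinatewise. Generic chaining over $\mathcal{D}^*(\ep)$, whose Gaussian width is $O(1)$, yields $\vert\tilde{\mathcal{S}}(\M)\vert\leq c_\star\sqrt{\gamma(\Delta_n)T}\,(\Vert\M\Vert_\star\lor\sqrt{\log(2/\ep)}\Vert\M\Vert_2)$ (Corollary \ref{cor:fin}). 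The tuning parameters are then free of $\ep$, as the theorem requires.
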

Before we continue with stating the upper bounds under different assumptions on the tails of $\nu$, we discuss several aspects of the above result.
\begin{remark}\label{rem:1}
    \begin{enumerate}[a)]
    \item Both of the above statements are sharp oracle inequalities in the empirical norm $\|\cdot\|_{l^2_n(B,X,\eta)}$. This mirrors Theorems 4.2 and 6.1 in \cite{belets18}, where Lasso and Slope estimators in the linear regression setting are investigated.
    \item  We see that the error decomposes into four components.
    \begin{itemize}
    \item A bias term, which measures how close the true drift matrix $\A_0$ is to a $s$-sparse matrix $\A$. Naturally, this term disappears if $\A_0$ itself is $s$-sparse.
        \item The discretization error, which is governed by the magnitude of the inter observation distance $\Delta_n$. For $\Delta_n\lesssim \Vert \A_0\Vert_0^{-1/2},$ this error is of the order $\Delta_n^2\Vert \A_0\Vert_2^4\lesssim \Delta_n^2\Vert \A_0\Vert_0^2$. In contrast, Corollary 4 in \cite{amorino2025sampling}, which concerns the Lasso estimator for continuous OU processes, shows a discretization error of the order $\Delta_nsd^4\log(d)$. This result in particular assumes $\Delta_n\lesssim s^{-2}$ implicitly, see the proof of Proposition 6.9, in particular the reference to the derivation of equation (49) in \cite{amorino2025sampling}. The reason for this difference, is the different approach to the estimation problem. Whereas we investigate L{\'e}vy-driven OU processes, \cite{amorino2025sampling} focuses on continuous diffusion processes with a general parametric structure in the drift function. To bound the discretization error in this setting, the authors obtain a high probability bound through Talagrand's generic chaining device, and apply this result to continuous OU processes. We, on the other hand, directly exploit the fact that L{\'e}vy-driven OU processes admit an explicit solution for bounding the discretization error by a deterministic bound through a Taylor approximation (see Proposition \ref{prop: discretizationdex}). 
        \item The truncation error, which depends directly on the tails of the L{\'e}vy measure $\nu$ and the truncation level $\eta$. In the following, we will choose $\eta$ such that it diverges as $T$ tends to infinity. Since $\Delta\tilde{Z}_1$ is a stochastic integral of length $\Delta_n,$ see \eqref{eq: def z tilde}, this leads to a sufficiently fast decrease of this part of the error. 
        \item The stochastic error, which is of the form $s\log(2\e d^2/s)/T,$ for suitably chosen tuning parameters. This convergence rate is known to be minimax optimal for sparse linear regression with $d^2$ parameters (see Theorem 7.1 in \cite{belets18}) and L{\'e}vy-driven OU processes 
        if continuous observations are available (see Theorem 2.7 in \cite{dex24}). The difference to the commonly obtained rate of convergence $s\log(d^2)/T$ (see e.g. \cite{amorino2025sampling,wong20}) lies in the logarithmic term. Although this term generally has a minimal impact, the more refined bound shows that the Lasso and Slope estimators can also achieve the minimax optimal rate for non-sparse estimation problems, i.e.\ if $s=d^2$.
    \end{itemize}
    \item The choice of the tuning parameters for the estimators only depends on $\Delta_n$ through the term $\exp(\Delta_n\Vert \A_0\Vert),$ which is negligible in a high-frequency framework. This is significantly different to \cite{amorino2025sampling}, where $\lambda$ has to be chosen larger than $ d^2\Delta_n^{3/2}\sqrt{\log(d^2)+\log(1/\ep)},$ see the definition of $\lambda_2^{OU}$ in \cite{amorino2025sampling}. The reason for this is the different approach to bounding the discretization error, which has already been discussed in point b). Moreover, the optimal choice of $\lambda$ for the Slope estimator is completely independent of the dimension and sparsity, whereas the optimal tuning parameter for the Lasso estimator depends on the a priori unknown sparsity $s$. This is also the case in \cite{belets18} and \cite{dex24}. However, since the ergodicity assumption \ref{ass: ergodicity} implies $s\geq d,$ one can always choose the logarithmic term as $\log(2\e d),$ which then leads to the commonly obtained rate of convergence $s\log(d)/T,$ for the Lasso estimator (see point b)). Therefore, the optimal choices of $\lambda$ essentially only depend on the observation horizon $T$ and are in particular completely independent of the confidence level $\ep_0+\ep$, which is for example not the case in \cite{amorino2025sampling,wong20}. Contrary to \cite{wong20}, the tuning parameters are also independent of the unknown mixing coefficients.
    \end{enumerate}
\end{remark}
In order to ease the presentation of the next result, we introduce the following condition on the truncation level $\eta$.
\begin{enumerate}[$(\mathcal{N})$]
	\item \label{ass: eta}
	There exists a constant $c_\eta>0,$ such that
    \[\P(\Vert \Delta\tilde{Z}_1\Vert\geq \eta/2)\leq c_\eta\frac{d\Delta_n}{T}. \]
\end{enumerate}
The truncation error in Theorem \ref{thm: main} is of the order $d/T,$ as soon as $\eta$ satisfies condition \ref{ass: eta}. In this case it is negligible compared to the stochastic error of order $s\log(2\e d^2/s)/T,$ since the ergodicity assumption \ref{ass: ergodicity} implies $s\geq d.$ Sufficient conditions on the magnitude of $\eta$ in order to ensure that assumption \ref{ass: eta} is satisfied are given in Proposition \ref{prop: eta} for the case where $\Z$ is continuous; has bounded jumps; is sub-Weibull; admits a polynomial moment. We summarize these in Table \ref{table:1}, together with their implications on the sample complexity $T_\star$. Not to overburden notation, we introduce for $\ep\in(0,1),$
\[T_{\star,0}=T_{\star,0}(\ep)\coloneq d\log(d/\ep)^2\lor \frac{d}{c_{\mathrm{mix},2}}\log\Big(\frac{c_{\mathrm{mix},1}d}{c_{\mathrm{mix},2}\ep}\Big)^2,\] where $c_{\mathrm{mix},1},c_{\mathrm{mix},2}>0$ are the constants appearing in the $\beta$-mixing coefficients, see \eqref{eq: mixing}. Explicit expressions for these constants are generally hard to obtain. However for the case where $\Z$ is continuous, we show in Proposition \ref{prop: mix}, that $c_{\mathrm{mix},1}$ is of order $d,$ whereas $c_{\mathrm{mix},2}$ only depends on the eigenvalues of $\C$ and $\C_\infty$. Hence, in this case $T_{\star,0}$ is of the order $d\log(d/\ep)^2$. Furthermore we only focus on the case of small confidence levels $\ep$ and omit terms without a direct dimensional dependence in Table \ref{table:1}; the explicit expressions can be found in Section \ref{sec: proof} and Proposition \ref{prop: eta}. We discuss these results in detail in Remark \ref{rem:cor}.
\begin{table}[h!]
    \centering
 \begin{tabular}{l|l|l}
  &Minimal order of $\eta$   & Minimal order of $T_{\star}(\ep,\eta)$
  \\ \hline
  $\Z$ is continuous   &$\sqrt{\log(T)d\Delta_n}$&$d^2\Delta_n\log(T)\log(1/\ep)\lor T_{\star,0} $
  \\
    $\Z$ has bounded jumps  &$\sqrt{d}\log(T)$&$d^2\log(T)\log(1/\ep)\lor T_{\star,0}$
    \\
      $\Z$ is $\alpha$ sub-Weibull   &$\sqrt{d}\log(T)^{1+1/\alpha}$&$d^2\log(T)^{2+2/\alpha}\log(1/\ep)\lor T_{\star,0}$
      \\
        $\Z$ admits a $p$-th moment  &$T^{1/p}d^{1/2-1/p}$&$d^{2-2/p}T^{2/p}\log(1/\ep)\lor T_{\star,0}$
\end{tabular}
    \caption{Minimal orders of the truncation level $\eta$ to satisfy condition \ref{ass: eta} under different assumptions on the BDLP $\Z$. Additionally, the implications on the sample complexity $T_\star$ are added.}
    \label{table:1}
\end{table}

We are now ready to state the next result, which in particular gives a bound for the error of the investigated estimators in the Frobenius norm.
\begin{corollary}\label{cor: main}
    Let everything be given as in Theorem \ref{thm: main}. Assume additionally that $\eta$ satisfies condition \ref{ass: eta}, that $T\geq 2c_\eta d\Delta_n$ and that $\A_0$ is $s$-sparse. Then there exists a universal constant $c>0,$ such that the following statements hold true.
    \begin{enumerate}[a)]
        \item For any $\ep\in(0,1)$ the bound
            \begin{align*}
                 &\Vert(\hat{\A}_{\mathsf{L}}-\A_0)X\|_{2}^2 +\frac{\lambda_{\mathsf{L}}}{\lambda_{\min}(\C_\infty(B))}\Vert\hat{\A}_{\mathsf{L}}-\A\Vert_1\\
    &\leq c\Delta_n^2\kappa(\C_\infty(B))\Vert\A_0\Vert_{2}^4\Big(1+\exp(\Delta_n\Vert\A_0\Vert_{2})\Delta_n\Vert\A_0\Vert_{2}\Big)^2 
    \\&\quad+ c\frac{\lambda^2_{\mathsf{L}}}{(\lambda_{\min}(\C_\infty(B))\land 1)^2}\Bigg(s\lor\frac{\log(2/\ep)}{\log(2\e d^2/s)}\Bigg),
    \end{align*}
     holds true with probability larger than $1-\ep_0-\ep$.
                \item For any $\ep\in(0,1)$ the bound
                 \begin{align*}
                       &\Vert(\hat{\A}_{\mathsf{S}}-\A_0)X\|_{2}^2 +\frac{\lambda_{\mathsf{S}}}{\lambda_{\min}(\C_\infty(B))}\Vert\hat{\A}_{\mathsf{S}}-\A_0\Vert_1
                 \\
    &\leq c\Delta_n^2\kappa(\C_\infty(B))\Vert\A_0\Vert_{2}^4\Big(1+\exp(\Delta_n\Vert\A_0\Vert_{2})\Delta_n\Vert\A_0\Vert_{2}\Big)^2 
    \\
 &\quad+c\frac{\lambda_{\mathsf{S}}^2}{(\lambda_{\min}(\C_\infty(B))\land 1)^2}\Bigg(s\log\Big(\frac{2\e d^2}{s}\Big)\lor\log(2/\ep)\Bigg),
    \end{align*}
    holds true with probability larger than $1-\ep_0-\ep$.
    \end{enumerate}
\end{corollary}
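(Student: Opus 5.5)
Set $\A=\A_0$ in Theorem~\ref{thm: main}; since $\A_0$ is $s$-sparse this is admissible and removes the bias term. Read $\Vert(\hat\A-\A_0)X\Vert_2^2$ as the population quantity $\Vert \C_\infty(B)^{1/2}(\hat\A-\A_0)^\top\Vert_2^2$, i.e.\ the Frobenius norm weighted by the truncated covariance. Then the main task is to pass from the empirical norm $\Vert\cdot\Vert_{l^2_n(B,X,\eta)}$ on the left-hand side to this quantity, and to simplify the remaining error terms with condition \ref{ass: eta}.

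\textbf{Simplifying the right-hand side.} Condition \ref{ass: eta} gives
\[
\frac{3}{\Delta_n}\lambda_{\max}(\C+\boldsymbol\nu_2)\exp(\Delta_n\Vert\A_0\Vert)\,\P(\Vert\Delta\tilde Z_1\Vert\ge\eta/2)\le 3c_\eta\frac{d}{T}\gamma(\Delta_n),
\]
using $\lambda_{\max}(\C_\infty(B))\lor1\ge1$. By \eqref{eq:lassolambda}, $\gamma(\Delta_n)/T\lesssim\lambda_{\mathsf L}^2/\log(2\e d^2/s)$. Since $s\ge d$ under \ref{ass: ergodicity}, we have $d/\log(2\e d^2/s)\le s$, so the truncation term is absorbed into the stochastic term. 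The same argument works for Slope: $d\le s\log(2\e d^2/s)$ and $\gamma/T\lesssim\lambda_{\mathsf S}^2$.

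The assumption $T\ge 2c_\eta d\Delta_n$ together with \ref{ass: eta} yields $\P(\Vert\Delta\tilde Z_1\Vert\le\eta/2)\ge 1/2$. This turns the denominator $\P(\cdots)$ into a constant. The discretization term is already of the stated form after bounding $(1/2+x/6)^2\le(1+x)^2$.

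\textbf{Passing to the population norm.} I would reuse the high-probability event from Proposition~\ref{prop: REP disc}/Proposition~\ref{prop: qt} that underlies $T\ge T_\star(\ep_0,\eta)$, and which is already included in the $1-\ep_0$ probability. On that event the truncated localized empirical covariance satisfies a restricted eigenvalue-type lower bound, presumably of the form
\[
\Vert\M X\Vert^2_{l^2_n(B,X,\eta)}\ge \tfrac12\,\P(\Vert\Delta\tilde Z_1\Vert\le\eta/2)\,\Vert \C_\infty(B)^{1/2}\M^\top\Vert_2^2
\]
for $\M$ in the relevant cone. Alternatively it may take the form of a bound $\lambda_{\min}(\hat\C^\eta_n(B))\gtrsim\lambda_{\min}(\C_\infty(B))$ on the cone. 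Applying this with $\M=\hat\A-\A_0$ gives the Frobenius error up to a constant, weighted appropriately. Combined with the theorem, it controls $\lambda_{\min}(\C_\infty(B))\Vert\hat\A-\A_0\Vert_2^2+\lambda\Vert\hat\A-\A_0\Vert_1$.

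Next, divide by $\lambda_{\min}(\C_\infty(B))\land1$. This produces the factor $\kappa(\C_\infty(B))$ on the discretization term, since that term carries $\lambda_{\max}(\C_\infty(B))$. It also produces the squared $(\lambda_{\min}\land1)^{-2}$ on the stochastic term, since that term already carries one $\lambda_{\min}^{-1}$.

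For Slope, convert the $\Vert\cdot\Vert_\star$ term into $\Vert\cdot\Vert_1$ via $\Vert\M\Vert_\star\ge\sqrt{\log 2}\,\Vert\M\Vert_1$. This holds because every weight $\sqrt{\log(2d^2/j)}\ge\sqrt{\log2}$.

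\textbf{Main obstacle.} The difficulty is that $\hat\A-\A_0$ must lie in the cone on which the restricted eigenvalue bound holds, or else the bound must hold uniformly over all matrices. I would first check whether Proposition~\ref{prop: REP disc} gives a uniform spectral bound $\Vert\hat\C^\eta_n(B)-\text{(its mean)}\Vert$ small on $T\ge T_\star$. If it does, then $\lambda_{\min}(\hat\C^\eta_n(B))\ge\tfrac14\lambda_{\min}(\C_\infty(B))$ directly and no cone argument is needed. Otherwise, I would derive the cone condition $\Vert(\hat\A-\A_0)_{S^{\mathsf c}}\Vert_1\le c\Vert(\hat\A-\A_0)_S\Vert_1$, plus a term from the discretization error, from the standard basic-inequality argument in the proof of Theorem~\ref{thm: main}, and handle the case where the discretization term dominates separately.
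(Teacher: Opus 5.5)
Your plan is correct and is essentially the paper's proof. You set $\A=\A_0$, absorb the truncation term via condition~\ref{ass: eta}, $\gamma(\Delta_n)/T\lesssim\lambda^2$ and $s\ge d$, and use $T\ge 2c_\eta d\Delta_n$ to get $\P(\Vert\Delta\tilde{Z}_1\Vert\le\eta/2)\ge 1/2$. The first branch of your ``main obstacle'' paragraph is the one that works: the theorem's event satisfies $\Lambda\subseteq Q_T\subseteq\mathfrak{C}\subseteq\mathfrak{C}^-_\eta$ (Proposition~\ref{prop:rep disc trunc}). So on that same event one has the uniform bound $\lambda_{\min}(\hat{\C}^\eta_n(B))\ge\frac14\lambda_{\min}(\C_\infty(B))\P(\Vert\Delta\tilde{Z}_1\Vert\le\eta_-)$ with $\eta_-\ge\eta/2$, i.e.\ \eqref{eq: lambda cov}, and no cone argument is needed.

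There are two small corrections. First, the left-hand side is the plain Frobenius norm $\Vert\hat{\A}-\A_0\Vert_2^2$, which is what your derivation actually controls; the $\C_\infty(B)$-weighted reading would cost an extra factor $\lambda_{\max}(\C_\infty(B))$, since only a $\lambda_{\min}$ bound on $\hat{\C}^\eta_n(B)$ is available. Second, you should divide by $\lambda_{\min}(\C_\infty(B))$ itself to obtain $\kappa(\C_\infty(B))$; the $\land 1$ only enters when the resulting $\lambda_{\min}^{-1}$ truncation term and $\lambda_{\min}^{-2}$ stochastic term are merged.
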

    The proof of Corollary \ref{cor: main} is given in Section \ref{sec: proof}. We close this section with a brief discussion of the above result and the sample complexities depicted in Table \ref{table:1}.
    \begin{remark}\label{rem:cor}
        \begin{enumerate}[a)]
            \item If the tuning parameters $\lambda_{\mathsf{L}}$ and $\lambda_{\mathsf{S}}$ are chosen such that they satisfy inequalities \eqref{eq:lassolambda}, respectively \eqref{eq:slopelambda}, with equality, and $\Delta_n\Vert \A_0\Vert_2\in O(1)$, the upper bounds of Corollary \ref{cor: main} are essentially given by 
            \[  \Delta_n^2d^2\Vert \A_0\Vert_2^4+\frac{s\log(2\e d^2/s)}{T}\leq   \Delta_n^2(d^2\Vert\A_0\Vert^4\land s^2\Vert \A_0\Vert_\infty^4)+\frac{s\log(2\e d^2/s)}{T}.\]
          We see that there are two different regimes. Ignoring the logarithmic term, we obtain that the discretization error dominates asymptotically if $s\Delta_n^2T\to \infty,$ whereas the stochastic error dominates if $s\Delta_n^2T\to 0$. This indicates that the same minimax optimal rate of convergence as for continuous observations, see \cite{dex24}, is achieved as soon as $\Delta_n\in o((sT)^{-1/2})$. The analogous sufficient condition obtained in Corollary 4 of \cite{amorino2025sampling} for continuous OU processes is of the form $\Delta_n\in o((d^4T)^{-1}),$ which is more restrictive. 
            \item The sample complexity required in Corollary \ref{cor: main} is given as the maximum of two terms: One which resembles the assumptions of Theorem \ref{thm: main} given by $T_{\star,0}$ and one required for condition \ref{ass: eta} to be satisfied. 
            The latter naturally depends on the tails of the L{\'e}vy-measure $\nu,$ see Table \ref{table:1}. In the case where $\Z$ is continuous, the sample complexity $T_\star$ is up to logarithmic factors equal to $T_{\star,0}$ as soon as $\Delta_n\in O(1/d)$. The term $T_{\star,0}$ is up to logarithmic factors identical to the sample complexity required for continuous BDLPs $\Z$ under continuous observations (see Section 3.1 in \cite{dex24}). We therefore recover the result of \cite{dex24} as $\Delta_n\to 0$. However, if the BDLP $\Z$ is not continuous, $T_{\star}$ can be dominated by the sample complexity required through condition \ref{ass: eta}, depending on the magnitude of the confidence level. In the sub-Weibull case we then require an observation horizon of length $\gtrsim d^2,$ up to logarithmic terms. In contrast, \cite{wong20}, where the Lasso estimator for exponentially $\beta$-mixing time series with sub-Weibull noise with parameter $\alpha\in(0,2)$ is investigated, derive a sample complexity of order $s^{1+2/\alpha},$ (see Corollary 9 in \cite{wong20} together with the definition of $\gamma$ therein), which strictly dominates $d^2$ if $\alpha\in(0,2)$ for the given problem due to assumption \ref{ass: ergodicity}. 
        \end{enumerate}
    \end{remark}
\section{Proof of the Main Result}\label{sec: proof}
Throughout this whole section we will assume that \ref{ass: ergodicity} holds true without further mention.
The first step in proving the main results of this paper is the following inequality for penalized estimators. It relies on deterministic arguments from convex analysis and is an adaptation of Lemma A.2 in \cite{belets18}. In high-dimensional statistics, such inequalities are often referred to as ``basic inequality''. 
\begin{lemma}\label{lemma: L2 frobenius disc}

Let $h\colon \R^{d\times d} \to \R$ be a convex function
and $\hat{\A}$ be a solution to the minimization problem $\min_{\A\in\R^{d\times d}}\left(\mathcal L^D_n(\A)+h(\A)\right)$. Then $\hat{\A}$ satisfies for all $\A\in\R^{d\times d},$
 \begin{align*}
    &\Vert(\hat{\A}-\A_0)X\|_{l^2_n(B,X,\eta)}^2 \\
    &\leq\|(\A-\A_0) X\|_{l^2_n(B,X,\eta)}^2-\|(\hat{\A}-\A)X\|_{l^2_n(B,X,\eta)}^2 + 2 \left(h(\A)-h(\hat{\A}) \right) \\ 
    & \quad + \frac{2}{T} \left(  \sum_{i=1}^n \left((\A - \hat{\A}) X_{t_{i-1}}\right)^\top (\exp(-\Delta_n\A_0)-\mathbb{I}_d+\Delta_n\A_0)X_{t_{i-1}} \1_{\{\Vert \Delta X_i \Vert< \eta\}} \1_B \left( X_{t_{i-1}} \right)\right.
    \\
    & \quad + \left. \sum_{i=1}^n \left((\A - \hat{\A}) X_{t_{i-1}}\right)^\top \Delta \tilde{Z}_i \1_{\{\Vert \Delta X_i \Vert< \eta\}} \1_B \left( X_{t_{i-1}} \right) \right),
\end{align*}
where
$\Delta \tilde{Z}_i$ is defined in \eqref{eq: def z tilde}.
\end{lemma}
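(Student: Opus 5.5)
The plan is to rewrite $\mathcal L^D_n$ as half the empirical squared distance to $\A_0$, plus a term linear in $\A$, plus a constant. The inequality then follows from the first-order optimality condition for a convex objective whose smooth part is an exact quadratic. Throughout I abbreviate the weights by $w_i\coloneq \1_B(X_{t_{i-1}})\1_{\{\Vert \Delta X_i\Vert<\eta\}}$.

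\emph{Step 1 (decomposition of the increments).} By the explicit representation \eqref{eq: ou explicit}, applied on $[t_{i-1},t_i]$ and using the definition \eqref{eq: def z tilde}, we have $X_{t_i}=\exp(-\Delta_n\A_0)X_{t_{i-1}}+\Delta\tilde Z_i$. Writing $\Rr\coloneq \exp(-\Delta_n\A_0)-\mathbb{I}_d+\Delta_n\A_0$, this gives
\[
\Delta X_i=-\Delta_n\A_0X_{t_{i-1}}+\xi_i,\qquad \xi_i\coloneq \Rr X_{t_{i-1}}+\Delta\tilde Z_i .
\]

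\emph{Step 2 (rewriting the pseudo-likelihood).} Substitute this into \eqref{eq: like disc} and complete the square, using
\[
(\A X)^\top(-\Delta_n\A_0X)+\tfrac{\Delta_n}{2}\Vert \A X\Vert^2=\tfrac{\Delta_n}{2}\bigl(\Vert(\A-\A_0)X\Vert^2-\Vert\A_0X\Vert^2\bigr).
\]
This yields, for every $\A\in\R^{d\times d}$,
\[
\mathcal L^D_n(\A)=\tfrac12\Vert(\A-\A_0)X\Vert^2_{l^2_n(B,X,\eta)}+\frac1T\sum_{i=1}^n w_i(\A X_{t_{i-1}})^\top\xi_i-\tfrac12\Vert \A_0X\Vert^2_{l^2_n(B,X,\eta)}.
\]
The sign is the delicate point here. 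Because the linear term enters \eqref{eq: like disc} with a plus sign, matching the drift $-\A_0X_t$, the cross term appears as $+\frac1T\sum w_i(\A X_{t_{i-1}})^\top\xi_i$. In the final inequality it will therefore be evaluated at $\A-\hat\A$, and not at $\hat\A-\A$.

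\emph{Step 3 (optimality).} Set $q(\A)\coloneq\mathcal L^D_n(\A)$. This is a quadratic polynomial in $\A$ whose second-order part is $\frac12\Vert \A X\Vert^2_{l^2_n(B,X,\eta)}$. Its Taylor expansion is therefore exact:
\[
q(\A)=q(\hat\A)+\langle\nabla q(\hat\A),\A-\hat\A\rangle+\tfrac12\Vert(\A-\hat\A)X\Vert^2_{l^2_n(B,X,\eta)}.
\]
Since $\hat\A$ minimizes the convex function $q+h$ and $q$ is differentiable, we have $0\in\nabla q(\hat\A)+\partial h(\hat\A)$. Hence $g\coloneq-\nabla q(\hat\A)$ is a subgradient of $h$ at $\hat\A$, so $h(\A)\ge h(\hat\A)-\langle\nabla q(\hat\A),\A-\hat\A\rangle$. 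Adding this to the exact expansion gives
\[
q(\hat\A)+h(\hat\A)+\tfrac12\Vert(\A-\hat\A)X\Vert^2_{l^2_n(B,X,\eta)}\le q(\A)+h(\A).
\]

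\emph{Step 4 (conclusion).} Insert the representation from Step 2 on both sides; the constants $-\frac12\Vert\A_0X\Vert^2_{l^2_n(B,X,\eta)}$ cancel. Multiplying by $2$ and rearranging gives
\[
\Vert(\hat\A-\A_0)X\Vert^2_{l^2_n(B,X,\eta)}\le\Vert(\A-\A_0)X\Vert^2_{l^2_n(B,X,\eta)}-\Vert(\hat\A-\A)X\Vert^2_{l^2_n(B,X,\eta)}+2(h(\A)-h(\hat\A))+\frac2T\sum_{i=1}^nw_i\bigl((\A-\hat\A)X_{t_{i-1}}\bigr)^\top\xi_i .
\]
Splitting $\xi_i=\Rr X_{t_{i-1}}+\Delta\tilde Z_i$ produces exactly the two sums in the statement.

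The only real obstacle is the sign bookkeeping in Step 2. The rest is the standard convex-analysis argument of Lemma A.2 in \cite{belets18}.
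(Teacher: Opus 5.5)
Your proof is correct and follows essentially the paper's argument: both rest on the first-order condition that $-\nabla\mathcal L^D_n(\hat{\A})$ is a subgradient of $h$ at $\hat{\A}$, combined with the fact that $\mathcal L^D_n$ is exactly quadratic with second-order part $\frac12\Vert \A X\Vert^2_{l^2_n(B,X,\eta)}$. The difference is only one of presentation. You compare objective values through the completed square and an exact second-order expansion, whereas the paper writes out the gradient $(\A-\A_0)\hat{\C}^\eta_n(B)+\ep_n(\A_0,B,\eta)$ and applies the polarization identity for the trace inner product, which is the same algebraic fact.
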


\begin{proof}[Proof of Lemma \ref{lemma: L2 frobenius disc}]
     We start by computing the gradient of $\mathcal{L}^D_n$. For this, we first note that Lemma \ref{lemma: basic ou} implies $\Delta X_i = (\exp(-\Delta_n\A_0)-\mathbb{I}_d)X_{t_{i-1}} + \Delta \tilde{Z}_i $. Therefore, we can rewrite
 \begin{align*}
    \mathcal{L}^D_n(\A) = & \frac{1}{T} \sum_{i=1}^n \left(\A X_{t_{i-1}}\right)^\top  (\exp(-\Delta_n\A_0)-\mathbb{I}_d+\Delta_n\A_0)X_{t_{i-1}} \1_{\{\Vert \Delta X_i \Vert< \eta\}} \\
     & +  \frac{1}{T} \sum_{i=1}^n \left(\A X_{t_{i-1}}\right)^\top \Delta \tilde{Z}_i \1_B\left(X_{t_{i-1}}\right) \1_{\{\Vert \Delta X_i \Vert< \eta\}} \\
     & -  \frac{\Delta_n}{T} \sum_{i=1}^n \left(\A X_{t_{i-1}}\right)^\top \A_0  X_{t_{i-1}} \1_B\left(X_{t_{i-1}}\right) \1_{\{\Vert \Delta X_i \Vert< \eta\}}  
       +  \frac{\Delta_n}{2T}\sum_{i=1}^n \Vert \A X_{t_{i-1}}\Vert^2 \1_B\left(X_{t_{i-1}}\right)\1_{\{\Vert \Delta X_i \Vert< \eta\}} \\
 = & \tr\left(\A\ep_n(\A_0, B, \eta)^\top - \A \hat{\C}^\eta_n(B)\A_0^\top +\frac{1}{2} \A \hat{\C}^\eta_n(B)\A^\top \right),
\end{align*}
 where
\begin{align*}
     \ep_n(\A_0,B,\eta)^\top \coloneq \frac{1}{T}  
     & \left( \sum_{i=1}^n X_{t_{i-1}}  ((\exp(-\Delta_n\A_0)-\mathbb{I}_d+\Delta_n\A_0)X_{t_{i-1}})^\top \1_{\{\Vert \Delta X_i \Vert< \eta\}} \1_B \left( X_{t_{i-1}} \right)\right.\\
     & \left. + \sum_{i=1}^n X_{t_{i-1}}
     \left( \Delta \tilde{Z}_i\right)^\top \1_{\{\Vert \Delta X_i \Vert< \eta\}} \1_B \left( X_{t_{i-1}} \right)  \right).
 \end{align*}
    Through the above expression a straightforward computation gives \[ \nabla \mathcal{L}^D_n(\A) = (\A-\A_0)\hat{\C}_n^{\eta}(B)+\ep_n(\A_0, B, \eta).\]
    For notational simplicity, we now define the function $ f\equiv \mathcal{L}^D_n+h$.
	Since $f$ is convex, it follows that $\mathbf{0}$ is in the subdifferential of $f$ at $\hat{\A}$.
	The Moreau--Rockafellar theorem then provides the existence of a matrix $\mathbf B$ in the subdifferential of $h$ at  $\hat{\A}$ such that $$\mathbf{0}=(\hat{\A}-\A_0)\hat{\C}_n^{\eta}+ \ep_n(\A_0, B, \eta) +\mathbf{B}.$$ 
    Then it holds $\tr( \mathbf{B}^\top( \A-\hat{\A}))\leq h(\A)-h(\hat{\A}),$  since the trace is the inner product
    associated with the Frobenius norm $\|\cdot\|_2$. 
	Consequently,
	\begin{align*}
		&\Vert(\hat{\A}-\A_0)X\|_{l^2_n(B,X,\eta)}^2-\|(\A-\A_0) X\|_{l^2_n(B,X,\eta)}^2+\|(\hat{\A}-\A)X\|_{l^2_n(B,X,\eta)}^2
		\\
		& =\tr\left(\left[(\hat{\A}-\A_0)^\top(\hat{\A}-\A_0)-(\A-\A_0)^\top(\A-\A_0)+(\hat{\A}-\A)^\top(\hat{\A}-\A)\right] \hat{\C}^\eta_n(B)\right)
		\\
		& =2\tr\left( (\hat{\A}-\A_0)^\top(\hat{\A}-\A)\hat{\C}^\eta_n(B)\right)
		\\
		& =2\tr\left((\hat{\A}-\A )^\top(\hat{\A}-\A_0)\hat{\C}^\eta_n(B)\right)
		\\
		& =2\tr\left((\A-\hat{\A})^\top(\ep_n\left(\A_0, B, \eta \right)+\mathbf{B})\right)
		\\
		& \leq 2 \left (\tr \left((\A-\hat{\A})^\top\ep_n(\A_0, B, \eta)\right)+h(\A)-h(\hat{\A})\right ),
	\end{align*}
 concluding the proof.
\end{proof} 
The basic bound provided by Lemma \ref{lemma: L2 frobenius disc} shows that in order to control the error of the estimators, we have to bound the following error terms
 \begin{align}
 \begin{split}\label{eq:def errs}
     &\mathcal{D}(\M) \coloneq \sum_{i=1}^n \left(\M X_{t_{i-1}}\right)^\top (\exp(-\Delta_n\A_0)-\mathbb{I}_d+\Delta_n\A_0)X_{t_{i-1}} \1_{\{\Vert \Delta X_i \Vert< \eta\}} \1_B \left( X_{t_{i-1}} \right)\\
     &\mathcal{S}(\M) \coloneq \sum_{i=1}^n \left(\M X_{t_{i-1}}\right)^\top \Delta \tilde{Z}_i \1_{\{\Vert \Delta X_i \Vert< \eta\}} \1_B \left( X_{t_{i-1}} \right),    
     \end{split}
 \end{align}
 where we recall that
 $$\Delta \tilde{Z}_i=\int_{t_{i-1}}^{t_i}\e^{-(t_i-u)\A_0}\d Z_u.$$
 In the following we will refer to $\mathcal{D}$ as ``discretization error'' since it tends to $0$ as the inter-observation distance $\Delta_n$ tends to $0$. We call $\mathcal{S}$ the ``stochastic error,'' which can be seen as discrete time analogue of the stochastic error observed under continuous observations, see \cite{dex24}. In particular, the stochastic error is essentially given as a martingale transform, which is the discrete counterpart of the It{\^o} integral $\ep_T$ observed for continuous observations in Lemma 2.1 of \cite{dex24}.
 \subsection{Discretization Error}
 Using a Taylor expansion together with classical inequalities, we obtain the following deterministic bound on the discretization error.

\begin{proposition}\label{prop: discretizationdex}
For any $\M\in\R^{d\times d},c>0,$ the following bound for the discretization error holds true 
\begin{align*}
\frac{2}{T}\mathcal{D}(\M)
&\leq\frac{1}{c} \Vert\M X\Vert_{l_n^2(B,X,\eta)}^2+c\Delta_n^2\Vert \hat{\C}^\eta_n(B)\Vert\Vert\A_0\Vert_{2}^4\Big(\frac{1}{2}+\frac{1}{6}\exp(\Delta_n\Vert\A_0\Vert_{2})\Delta_n\Vert\A_0\Vert_{2}\Big)^2 .
\end{align*}
\end{proposition}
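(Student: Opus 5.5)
Set $\mathbf{R}\coloneq \exp(-\Delta_n\A_0)-\mathbb{I}_d+\Delta_n\A_0$ and abbreviate $w_i\coloneq\1_{\{\Vert \Delta X_i\Vert<\eta\}}\1_B(X_{t_{i-1}})\in\{0,1\}$, so that $w_i=w_i^2$. Then
\[
\frac{2}{T}\mathcal{D}(\M)=\frac{2}{T}\sum_{i=1}^n (\M X_{t_{i-1}}w_i)^\top(\mathbf{R}X_{t_{i-1}}w_i).
\]
We apply Young's inequality $2x^\top y\le \frac1c\Vert x\Vert^2+c\Vert y\Vert^2$ to each summand with $x=\sqrt{\Delta_n}\M X_{t_{i-1}}w_i$ and $y=\Delta_n^{-1/2}\mathbf{R}X_{t_{i-1}}w_i$. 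Summing gives
\[
\frac{2}{T}\mathcal{D}(\M)\le \frac1c\Vert \M X\Vert^2_{l^2_n(B,X,\eta)}+\frac{c}{T\Delta_n}\sum_{i=1}^n\Vert \mathbf{R}X_{t_{i-1}}\Vert^2 w_i .
\]
The first term is already the one claimed in Proposition~\ref{prop: discretizationdex}.

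For the second term we write
\[
\frac{1}{T\Delta_n}\sum_i \Vert \mathbf{R}X_{t_{i-1}}\Vert^2w_i=\frac{1}{\Delta_n^2}\tr\big(\mathbf{R}\hat{\C}^\eta_n(B)\mathbf{R}^\top\big),
\]
which uses $\Delta_n/T$ normalisation, i.e.\ $\frac{1}{T\Delta_n}=\frac{1}{\Delta_n^2}\cdot\frac{\Delta_n}{T}$. Since $\hat{\C}^\eta_n(B)$ is positive semidefinite, this trace is bounded by $\Vert\hat{\C}^\eta_n(B)\Vert\,\Vert\mathbf{R}\Vert_2^2$, using the Frobenius norm of $\mathbf R$. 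It therefore suffices to show
\[
\Vert \mathbf{R}\Vert_2\le \Delta_n^2\Vert\A_0\Vert_2^2\Big(\tfrac12+\tfrac16\exp(\Delta_n\Vert\A_0\Vert_2)\Delta_n\Vert\A_0\Vert_2\Big).
\]

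This is the Taylor step. From the power series,
\[
\mathbf R=\sum_{k\ge2}\frac{(-\Delta_n\A_0)^k}{k!}=\frac{\Delta_n^2\A_0^2}{2}+\sum_{k\ge3}\frac{(-\Delta_n\A_0)^k}{k!}.
\]
The Frobenius norm is submultiplicative, so $\Vert\A_0^k\Vert_2\le\Vert\A_0\Vert_2^k$. For the tail we use $1/k!\le 1/(6(k-3)!)$ for $k\ge3$, which gives
\[
\sum_{k\ge3}\frac{x^k}{k!}\le\frac{x^3}{6}\e^{x},\qquad x=\Delta_n\Vert\A_0\Vert_2 .
\]
Adding the quadratic term yields exactly the factor $\Delta_n^2\Vert\A_0\Vert_2^2\big(\frac12+\frac16 x\e^{x}\big)$. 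Squaring it and combining with the previous display gives the claimed bound with $\Vert\A_0\Vert_2^4$ and the squared bracket.

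The only real care point is the choice of norms. We need the operator norm on $\hat{\C}^\eta_n(B)$ but the Frobenius norm on $\mathbf R$, because the exponential in the bound is $\exp(\Delta_n\Vert\A_0\Vert_2)$ with the Frobenius norm. The step $\tr(\mathbf R\C\mathbf R^\top)\le\Vert\C\Vert\,\Vert\mathbf R\Vert_2^2$ for positive semidefinite $\C$ reconciles the two, so I expect no genuine obstacle.
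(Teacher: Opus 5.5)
Your proposal is correct and follows the paper's proof essentially step for step. The steps match: Young's inequality with the $\Delta_n$-weighting (the paper applies Cauchy--Schwarz first, which is equivalent); rewriting the remainder as $\Delta_n^{-2}\tr(\mathbf{R}\hat{\C}^\eta_n(B)\mathbf{R}^\top)\le\Delta_n^{-2}\Vert\hat{\C}^\eta_n(B)\Vert\,\Vert\mathbf{R}\Vert_2^2$; and the Taylor bound $\Vert\mathbf{R}\Vert_2\le \e^{x}-1-x\le x^2/2+x^3\e^{x}/6$ with $x=\Delta_n\Vert\A_0\Vert_2$. Your termwise estimate $1/k!\le 1/(6(k-3)!)$ just makes explicit the last inequality, which the paper states without justification.
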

\begin{proof}
For the remainder of the proof, we denote
$$\Rr_n \coloneq \Delta_n\A_0-\mathbb{I}_d+\exp(-\Delta_n\A_0).$$

Applying the Cauchy--Schwarz inequality together with Young's inequality, we get that for any~$c>0$,
\begin{align}
\begin{split}\label{eq:discerr}
\frac{2}{T}\mathcal{D}(\M)
&=
\frac{2}{T}\sum_{i=1}^n \left(\M X_{t_{i-1}}\right)^\top \Rr_nX_{t_{i-1}} \1_{\{\Vert \Delta X_i \Vert< \eta\}} \1_B \left( X_{t_{i-1}} \right)
\\
&\leq \frac{2}{T}\sum_{i=1}^n \Vert\M X_{t_{i-1}}\Vert \Vert\Rr_nX_{t_{i-1}}\Vert \1_{\{\Vert \Delta X_i \Vert< \eta\}} \1_B \left( X_{t_{i-1}} \right)
\\
&\leq\frac{\Delta_n}{Tc} \sum_{i=1}^n \Vert\M X_{t_{i-1}}\Vert^2\1_B \left( X_{t_{i-1}} \right)\1_{\{\Vert \Delta X_i \Vert< \eta\}}+\frac{c}{T\Delta_n}\sum_{i=1}^n\Vert\Rr_nX_{t_{i-1}}\Vert^2  \1_B \left( X_{t_{i-1}} \right)\1_{\{\Vert \Delta X_i \Vert< \eta\}}
\\
&=\frac{1}{c} \Vert\M X\Vert_{l_n^2(B,X,\eta)}^2+\frac{c}{T\Delta_n}\sum_{i=1}^n\tr(X_{t_{i-1}}^\top \Rr^\top_n\Rr_nX_{t_{i-1}})1_B \left( X_{t_{i-1}} \right)\1_{\{\Vert \Delta X_i \Vert< \eta\}}
\\
&=\frac{1}{c} \Vert\M X\Vert_{l_n^2(B,X,\eta)}^2+\frac{c}{\Delta^2_n}\tr(\Rr_n\hat{\C}^\eta_n(B)\Rr_n^\top)
\\
&\leq \frac{1}{c} \Vert\M X\Vert_{l_n^2(B,X,\eta)}^2+\frac{c}{\Delta^2_n}\Vert \hat{\C}^\eta_n(B)\Vert\Vert\Rr_n\Vert^2_2
\end{split}
\end{align}
where we also used the cyclic property of the trace operator together with the well-known inequality $\tr(\A\C\A^\top)\leq \Vert \C\Vert\Vert \A\Vert^2_2,$ for any $\A,\C\in\R^{d\times d},$ such that $\C$ is positive semidefinite. Now applying a Taylor approximation together with the triangle inequality gives
\begin{align*}
    \Vert \Rr_n\Vert_2&=\Vert \Delta_n\A_0-\mathbb{I}_d+\exp(-\Delta_n\A_0)\Vert_{2}
    \\
    &=\Big\Vert \sum_{k=2}^\infty\frac{(-\Delta_n\A_0)^k}{k!}\Big\Vert_{2}
       \\
    &\leq \sum_{k=2}^\infty\frac{\Vert\Delta_n\A_0\Vert_{2}^k}{k!}
    \\
    &=\exp(\Delta_n\Vert\A_0\Vert_{2})-1-\Delta_n\Vert\A_0\Vert_{2}
    \\
    &\leq \frac{1}{2}(\Delta_n\Vert\A_0\Vert_{2})^2+\frac{1}{6}\exp(\Delta_n\Vert\A_0\Vert_{2})(\Delta_n\Vert\A_0\Vert_{2})^3
    \\
    &=(\Delta_n\Vert\A_0\Vert_{2})^2\Big(\frac{1}{2}+\frac{1}{6}\exp(\Delta_n\Vert\A_0\Vert_{2})\Delta_n\Vert\A_0\Vert_{2}\Big).
\end{align*}
Plugging this bound into \eqref{eq:discerr} completes the proof.
\end{proof}
We see that the upper bound in Proposition \ref{prop: discretizationdex} depends on the operator norm of the truncated empirical covariance matrix $\hat{\C}^\eta_n(B).$ In order to control this term we require a sufficiently strong concentration of $\hat{\C}^\eta_n(B).$ This is the subject of the next section.
\subsection{Concentration of the Empirical Covariance Matrix}\label{subsec: cov}
Proposition \ref{prop: discretizationdex} implies that a sufficiently strong concentration of the empirical covariance matrix $\hat{\C}^\eta_n(B)$ suffices to control the discretization error. Such a concentration behavior is often required for the theoretical analysis of the Lasso and Slope estimators in the form of so-called restricted eigenvalue conditions (see e.g. Section 3.1 in \cite{cmp20} and \cite{dex24}). For continuous OU processes \cite{cmp20} obtained concentration of the empirical covariance matrix through Malliavin calculus (see Proposition 3.2 in \cite{cmp20}) and \cite{amorino2025sampling} achieves similar results through the same technique (see Proposition 3.3 in \cite{amorino2025sampling}). 

However, for discontinuous OU processes, this approach is not available. In \cite{dex24} this issue was addressed by assuming a suitable behavior of the empirical covariance matrix (see assumption $(\mathcal{H})$ in \cite{dex24}). This assumption was also verified by an application of Markov's inequality. Unfortunately, this result leads to an extremely large sample complexity that grows exponentially with the dimension. 

We therefore follow another approach which relies on the fact that the L{\'e}vy-driven OU process $\X$ is exponentially $\beta$-mixing as soon as the L{\'e}vy-measure $\nu$ admits a polynomial moment (see \cite{mas04}). Since we assume $\nu$ to admit a second moment (see \ref{ass: ergodicity}), there then exist constants $c_{\mix,1},c_{\mix,2}>0,$ such that the $\beta$-mixing coefficients of $\X,$ denoted by $\beta_{\X}$ are bounded by
\begin{equation}\label{eq: mixing}
\beta_{\X}(t)\leq c_{\mix,1}\exp(-c_{\mix,2}t), 
\end{equation}
for all $t\geq0$. Explicit values for the constants $c_{\mix,1},c_{\mix,2}$ are generally hard to obtain. We provide upper bounds for the continuous case in Proposition \ref{prop: mix}.

Using a Berbee coupling as in the proof of Theorem 3.1 in \cite{dex22}, the mixing property allows us to approximate the empirical covariance matrix by a sum of independent random variables. Since we employ a truncated version of the likelihood function, these random variables are also bounded. We can thus apply the classical Bernstein inequality to the independent approximation of the empirical covariance matrix, which leads to the following result.

\begin{proposition}\label{prop: REP disc}
For any $u>0,m_T\in(\Delta_n,T/6),$ it holds
$$\P(\Vert\hat{\C}_n(B)- \C_\infty(B)\Vert>u ) \leq4d\exp\Big(-\frac{T u^2}{4b^2m_T(6\lambda_{\max}(\C_\infty(B))+u)}\Big) +2c_{\mix,1}\frac{T}{m_T}\exp(-c_{\mix,2}m_T). $$
\end{proposition}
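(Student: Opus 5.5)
The argument combines a blocking scheme with a Berbee coupling, as in the proof of Theorem 3.1 in \cite{dex22}, and then applies the matrix Bernstein inequality to the resulting bounded independent blocks.

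\textbf{Centring and bounds.} Write
\[
\hat{\C}_n(B)-\C_\infty(B)=\frac{\Delta_n}{T}\sum_{i=1}^n \mathbf{Y}_i,\qquad \mathbf{Y}_i\coloneq X_{t_{i-1}}X_{t_{i-1}}^\top\1_B(X_{t_{i-1}})-\C_\infty(B).
\]
By stationarity (Assumption \ref{ass: ergodicity}) each $\mathbf{Y}_i$ is a centred symmetric random matrix. Since $\Vert X_{t_{i-1}}\Vert<b$ on $B$, we have
\[
\Vert \mathbf{Y}_i\Vert\le b^2+\lambda_{\max}(\C_\infty(B))\le 2b^2
\]
(up to constants, using $\lambda_{\max}(\C_\infty(B))\le b^2$). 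Moreover
\[
\E[(xx^\top\1_B(x))^2]=\E[\Vert X\Vert^2 XX^\top\1_B(X)]\preceq b^2\C_\infty(B),
\]
so $\Vert\E \mathbf{Y}_i^2\Vert\le b^2\lambda_{\max}(\C_\infty(B))$.

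\textbf{Blocking and coupling.} Group consecutive indices into blocks of time length $m_T$, i.e.\ about $m_T/\Delta_n$ observations each, and alternate odd and even blocks, giving roughly $T/(2m_T)$ blocks of each parity. Let $\mathbf{S}_j$ denote the normalised block sum $\frac{\Delta_n}{m_T}\sum_{i\in \text{block } j}\mathbf{Y}_i$. By the triangle inequality it suffices to bound each parity sum by $u/2$, which is where the factor $2$ in front of $2d$ comes from. Berbee's lemma gives, for the odd blocks, independent copies $\mathbf{S}_j^*$ with the same law and
\[
\P(\mathbf{S}_j\ne \mathbf{S}_j^*)\le \beta_{\X}(m_T)\le c_{\mix,1}\e^{-c_{\mix,2}m_T}
\]
by \eqref{eq: mixing}. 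A union bound over at most $T/m_T$ blocks for both parities yields the additive term $2c_{\mix,1}\frac{T}{m_T}\exp(-c_{\mix,2}m_T)$. The incomplete final block is absorbed by the assumption $m_T<T/6$, for instance by distributing the leftover indices into existing blocks, which only changes constants.

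\textbf{Matrix Bernstein.} The independent block matrices $\mathbf{S}_j^*$ are centred and satisfy $\Vert\mathbf{S}_j^*\Vert\le 2b^2$. For the variance, convexity gives
\[
\E(\mathbf{S}_j^*)^2\preceq \frac{\Delta_n}{m_T}\sum_i\E\mathbf{Y}_i^2,
\]
by Jensen or Cauchy--Schwarz for operator-convex squares, so its norm is at most $b^2\lambda_{\max}(\C_\infty(B))$. With $N\approx T/(2m_T)$ blocks, the parity average is $\frac{1}{N}\sum_j\mathbf{S}_j^*$. The matrix Bernstein inequality (Tropp) then gives
\[
\P\Big(\Big\Vert \frac1N\sum_j \mathbf{S}_j^*\Big\Vert>u/2\Big)\le 2d\exp\Big(-\frac{N(u/2)^2/2}{b^2\lambda_{\max}+2b^2u/6}\Big).
\]
Tracking constants, this should have the form $2d\exp\big(-\tfrac{Tu^2}{4b^2m_T(6\lambda_{\max}(\C_\infty(B))+u)}\big)$, with the slack from $m_T<T/6$ and block-count rounding absorbed into the constants $4$ and $6$. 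Summing over the two parities gives the factor $4d$.

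\textbf{Main obstacle.} The delicate point is the bookkeeping: the block count must be an integer with a remainder, and the blocks must be aligned to the grid of width $\Delta_n$, which is why $m_T>\Delta_n$ is assumed. The variance bound for block sums also needs care: it should use only the trivial convexity bound rather than any covariance decay, and the exact constants must still come out as stated.
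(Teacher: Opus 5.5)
Your route is the paper's: alternating blocks of length about $m_T$, a Berbee coupling with a union bound that produces the $\beta_{\X}(m_T)$ term, and matrix Bernstein with only the trivial variance bound. Your variance step via $(\sum_{i=1}^k\mathbf{Y}_i)^2\preceq k\sum_{i=1}^k\mathbf{Y}_i^2$ and $\E\mathbf{Y}_i^2\preceq\E[\Vert X\Vert^2XX^\top\1_B(X)]\preceq b^2\C_\infty(B)$ is a valid substitute for the paper's quadratic-form Cauchy--Schwarz argument. Both give $\Vert\E U_j^2\Vert\le k_j^2b^2\lambda_{\max}$ for a raw block sum $U_j$ of $k_j$ centred terms, where $\lambda_{\max}\coloneq\lambda_{\max}(\C_\infty(B))$.

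The gap is the part you defer. The constants $4$ and $6$ are fixed in the statement, so nothing can be ``absorbed'', and your displayed chain does not produce them. The parity part of $\hat{\C}_n(B)-\C_\infty(B)$ is $\frac{m_T}{T}\sum_j\mathbf{S}_j\approx\frac{1}{2N}\sum_j\mathbf{S}_j$, which is half of your parity average. So the relevant event is $\Vert\frac1N\sum_j\mathbf{S}_j^*\Vert>u$, not $>u/2$. Bounding it at level $u/2$, as in your display, gives with $N=T/(2m_T)$ the exponent $\frac{3Tu^2}{16b^2m_T(3\lambda_{\max}+u)}$. This is strictly smaller than $\frac{Tu^2}{4b^2m_T(6\lambda_{\max}+u)}$ whenever $u>6\lambda_{\max}$, before any rounding loss. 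That range is non-trivial whenever $6\lambda_{\max}<b^2$, because the left-hand side only vanishes for $u\ge b^2$. In addition, $\Delta_nk_j/m_T$ can be close to $2$, so your variance proxy $b^2\lambda_{\max}$ for $\mathbf{S}_j$ can be off by a factor $4$.

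The bookkeeping can be closed along your own lines:
\begin{itemize}
\item Take $K=\lfloor T/m_T\rfloor\ge6$ blocks of equal length $L=T/K\in[m_T,\tfrac65 m_T)$; this is your redistribution of leftovers.
\item Same-parity blocks are then $L\ge m_T$ apart.
\item Each block holds $k_j<2L/\Delta_n$ observations, and each parity has at most $\lceil K/2\rceil\le\frac{7}{12}K$ blocks.
\item $\Vert U_j\Vert\le k_jb^2$, since $U_j$ is a difference of two positive semidefinite matrices of norm at most $k_jb^2$. This is sharper than your $2b^2$ per term, and it is needed.
\end{itemize}
Bernstein at $t=nu/2$, with $\sigma^2<\frac{7TL}{3\Delta_n^2}b^2\lambda_{\max}$ and $R<\frac{2L}{\Delta_n}b^2$, then gives
\[
\frac{t^2/2}{\sigma^2+Rt/3}>\frac{3Tu^2}{8b^2L(7\lambda_{\max}+u)}>\frac{5Tu^2}{16b^2m_T(7\lambda_{\max}+u)}\ge\frac{Tu^2}{4b^2m_T(6\lambda_{\max}+u)}.
\]
The coupling term is $K\beta_{\X}(m_T)\le\frac{T}{m_T}c_{\mix,1}\exp(-c_{\mix,2}m_T)$.

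This finer rounding is not cosmetic. The paper instead passes to $\tilde m_T\in[m_T,3m_T]$ with $(T+\Delta_n)/(2\tilde m_T)\in\N$, which costs a factor $3$ in the exponent. Its Bernstein display has $(nu)^2$ in the numerator where $(nu/2)^2$ is due. With the correct exponent $\frac{t^2/2}{\sigma^2+Rt/3}$, that route only yields $16$ in place of $4$.
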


\begin{proof}
We first assume that $m_T$ is such that  $n_T\coloneq (T+\Delta_n)/(2m_T)\in\N.$ For $j\in[n_T]$ we define 
\begin{align}
\begin{split}
    \label{eq: blocks disc}
      X^{j,1}&\coloneq (X_{t_i})_{t_i\in [2(j-1)m_T,(2j-1)m_T)\cap\T},\quad X^{j,2}\coloneq (X_{t_i})_{t_i\in[(2j-1)m_T,2jm_T)\cap\T},  
      \\
      n^{j,1}&\coloneq \vert [2(j-1)m_T,(2j-1)m_T)\cap\T\vert,\quad n^{j,2}\coloneq \vert [(2j-1)m_T,2jm_T)\cap\T\vert,       
\end{split}
\end{align}
where $\T \coloneq \{i\Delta_n: i\in[n]\}$ denotes the observation times. 
Then $X^{j,1}$ is a measurable function of $X_{[2(j-1)m_T,(2j-1)m_T)},$ for all $j\in[n_T],$ and an analogous result holds true for $X^{j,2}$. Hence, we can argue as in the proof of Theorem 3.1 in \cite{dex22}, which allows us to construct random variables $\hat{X}^{j,k},k\in[2],j\in[n_T],$ such that
\begin{equation}\label{eq:mix disc}
\begin{array}{l}
    (1) \quad X^{j,k}\overset{(d)}{=}\hat{X}^{j,k}, \quad \text{for all } j\in[n_T], \\[6pt]
    (2) \quad \P(X^{j,k}\neq \hat{X}^{j,k})\leq \beta_{\X}(m_T), \quad \text{for all } j\in[n_T], \\[6pt]
    (3) \quad \hat{X}^{1,k},\ldots,\hat{X}^{n_T,k} \text{ are independent}.
\end{array}
\end{equation}
for $k\in[2]$.

We continue by splitting the empirical covariance matrix into the parts contributed by $X^{j,1}$ and $X^{j,2}$. More precisely, for $j\in[n_T],$ we define
\[S(X^{j,1})\coloneq \sum_{t_i\in [2(j-1)m_T,(2j-1)m_T)\cap\T} X_{t_i}X_{t_i}^\top \1_{B}(X_{t_i}), 
\quad
S(X^{j,2})\coloneq  \sum_{t_i\in[(2j-1)m_T,2jm_T)\cap\T} X_{t_i}X_{t_i}^\top \1_{B}(X_{t_i}), \]
and analogously we define $S(\hat{X}^{j,k})$ for $j\in[n_T],k\in[2]$. Then it holds by construction that $\hat{\C}_n(B) = \frac{1}{n} \sum_{j=1}^{n_T} (S(X^{j, 1})+ S(X^{j, 2})).$ Additionally, the properties in \eqref{eq:mix disc} imply
\begin{align}
\begin{split}\label{eq:berndis}
   & \P(\Vert\hat{\C}_n(B)- \C_\infty(B)\Vert>u ) 
  \\
  &\leq \P\Bigg(\Vert\frac{1}{n}\sum_{j=1}^{n_T} (S(X^{j, 1})- n^{j,1}\C_\infty(B)\Vert)+\Vert\frac{1}{n} \sum_{j=1}^{n_T} (S(X^{j, 2})-n^{j,2}\C_\infty(B)\Vert)>u \Bigg) 
    \\
  &\leq \P\Bigg(\Vert\frac{1}{n}\sum_{j=1}^{n_T} (S(X^{j, 1})-n^{j,1}\C_\infty(B))\Vert>\frac u 2\Bigg) +\P\Bigg(\Vert\frac{1}{n} \sum_{j=1}^{n_T} (S(X^{j, 2})-n^{j,2}\C_\infty(B))\Vert>\frac u 2 \Bigg) 
      \\
  &\leq \P\Bigg(\Vert\frac{1}{n} \sum_{j=1}^{n_T} (S(\hat{X}^{j, 1})-n^{j,1} \C_\infty(B))\Vert>\frac u 2\Bigg) +\P\Bigg(\Vert\frac{1}{n}\sum_{j=1}^{n_T} (S(\hat{X}^{j, 2})-n^{j,2} \C_\infty(B))\Vert>\frac u 2 \Bigg) 
  \\
  &\quad +\P(\exists j\in[n_T]:X^{j,1}\neq \hat{X}^{j,1})+\P(\exists j\in[n_T]:X^{j,2}\neq \hat{X}^{j,2})
        \\
  &\leq \P\Bigg(\Vert\sum_{j=1}^{n_T}\Big(S(\hat{X}^{j, 1})-n^{j,1}\C_\infty(B)\Big)\Vert>\frac{nu}{2}\Bigg) +\P\Bigg(\Vert\sum_{j=1}^{n_T}\Big( S(\hat{X}^{j, 2})- n^{j,2}\C_\infty(B)\Big)\Vert>\frac{nu}{2} \Bigg) 
  \\
  &\quad +2\frac{T}{m_T}\beta_X(m_T),   
\end{split}
\end{align}
 where we applied a union bound in the last step together with the fact that $n_T=(T+\Delta_n)/(2m_T)\leq T/m_T$.
Now, by definition it holds that $\E[S(\hat{X}^{j,k})]=n^{j,k}\C_\infty(B),$ for all $j\in[\nu_n],k\in[2].$ Additionally we note that $n^{j,k}\leq\lceil m_T/\Delta_n\rceil\leq 2m_T/\Delta_n,$ since $m_T\geq \Delta_n,$ and hence
\begin{align*}
\Vert  S(\hat{X}^{j, k})- n^{j,k}\C_\infty(B)\Vert&\leq 2n^{j,k}b^2
\\
&\leq 4\frac{m_T}{\Delta_n}b^2
\end{align*}
and since $\Big( S(\hat{X}^{j, k})- n^{j,k}\C_\infty(B)\Big)^2$ is a positive semidefinite matrix, the Courant--Fischer theorem gives 
\begin{align*}
    &\Vert \sum_{j=1}^{n_T}\E\Big[\Big( S(\hat{X}^{j, 1})- n^{j,1}\C_\infty(B)\Big)^2\Big]\Vert
    \\
    &\leq \sum_{j=1}^{n_T}\max_{u\in\R^d:\Vert u\Vert=1}u^\top\E\Big[\Big( S(\hat{X}^{j, 1})- n^{j,1}\C_\infty(B)\Big)^2\Big]u
    \\
    &\leq\sum_{j=1}^{n_T}\max_{u\in\R^d:\Vert u\Vert=1}u^\top\E\Big[(S(\hat{X}^{j, 1}))^2\Big]u
    \\
    &= \sum_{j=1}^{n_T}\max_{u\in\R^d:\Vert u\Vert=1}\sum_{t_i,t_j\in [2(j-1)m_T,(2j-1)m_T)\cap\T} \E\Big[\vert u^\top X_{t_i}X_{t_i}^\top X_{t_j}X_{t_j}^\top u\1_{B}(X_{t_i})\1_{B}(X_{t_j})\vert\Big]
    \\
    &\leq b^2\sum_{j=1}^{n_T}\max_{u\in\R^d:\Vert u\Vert=1}\sum_{t_i,t_j\in [2(j-1)m_T,(2j-1)m_T)\cap\T} \E\Big[\vert u^\top X_{t_i}\vert\vert X_{t_j}^\top u\vert\1_{B}(X_{t_i})\1_{B}(X_{t_j})\Big]
       \\
    &\leq b^2n_T(n^{j,k})^2\lambda_{\max}(\C_\infty(B))
    \\
    &\leq4 b^2n_T\frac{m_T^2}{\Delta^2_n}\lambda_{\max}(\C_\infty(B)),
\end{align*}
where we used Cauchy--Schwarz and H{\"o}lder's inequality. Analogous arguments give the same bounds for $k=2$. Since the $\hat{X}^{j,k}$ are independent for fixed $k=1,2,$ we can thus apply the matrix Bernstein inequality (Theorem 1.4 in \cite{tropp12}), which gives
\begin{align*}
&\P\Bigg(\Vert\sum_{j=1}^{n_T}\Big(S(\hat{X}^{j, 1})-n^{j,1}\C_\infty(B)\Big)\Vert>\frac{nu}{2}\Bigg) +\P\Bigg(\Vert\sum_{j=1}^{n_T}\Big( S(\hat{X}^{j, 2})- n^{j,2}\C_\infty(B)\Big)\Vert>\frac{nu}{2} \Bigg) 
\\
&\leq \P\Bigg(\lambda_{\max}\Big(\sum_{j=1}^{n_T}\Big(S(\hat{X}^{j, 1})-n^{j,1}\C_\infty(B)\Big)\Big)>\frac{nu}{2}\Bigg)+\P\Bigg(\lambda_{\max}\Big(\sum_{j=1}^{n_T}\Big(n^{j,1}\C_\infty(B)-S(\hat{X}^{j, 1})\Big)\Big)>\frac{nu}{2}\Bigg)
\\
&\quad+\P\Bigg(\lambda_{\max}\Big(\sum_{j=1}^{n_T}\Big(S(\hat{X}^{j, 2})-n^{j,2}\C_\infty(B)\Big)\Big)>\frac{nu}{2}\Bigg)+\P\Bigg(\lambda_{\max}\Big(\sum_{j=1}^{n_T}\Big(n^{j,2}\C_\infty(B)-S(\hat{X}^{j, 2})\Big)\Big)>\frac{nu}{2}\Bigg)
\\
&\leq 4d\exp\Big(-\frac{(nu)^2}{8b^2n_T\frac{m_T^2}{\Delta^2_n}\lambda_{\max}(\C_\infty(B))+\frac{4num_T}{3\Delta_n}b^2}\Big)
\\
&= 4d\exp\Big(-\frac{(nu)^2\Delta_n^2}{8b^2n_Tm_T^2\lambda_{\max}(\C_\infty(B))+\frac{4n\Delta_num_T}{3}b^2}\Big)
\\
&= 4d\exp\Big(-\frac{T^2 u^2}{4b^2m_T(T+\Delta_n)\lambda_{\max}(\C_\infty(B))+\frac{4Tum_T}{3}b^2}\Big)
\\
&\leq 4d\exp\Big(-\frac{3T u^2}{4b^2m_T(6\lambda_{\max}(\C_\infty(B))+u)}\Big),
\end{align*}
where we used that $n_Tm_T=(T+\Delta_n)/2\leq T$ again.
Combining this with \eqref{eq:berndis} implies
\begin{equation}\label{eq:berdiscint}
       \P(\Vert\hat{\C}_n(B)- \C_\infty(B)\Vert>u ) 
  \leq 4d\exp\Big(-\frac{3T u^2}{4b^2m_T(6\lambda_{\max}(\C_\infty(B))+u)}\Big) +2\frac{T}{m_T}\beta_X(m_T).
\end{equation}
Now let $\Delta_n<m_T\leq T/6$ be given and set $\tilde{n}_T\coloneq \lfloor (T+\Delta_n)/(2m_T)\rfloor,\tilde{m}_T\coloneq (T+\Delta_n)/(2\tilde{n}_T).$ Then by definition $\tilde{m}_T\geq m_T,$ and
\begin{align*}
   \tilde{m}_T&= (T+\Delta_n)/(2\tilde{n}_T)
   \\
   &\leq \frac{T+\Delta_n}{(T+\Delta_n)/(2m_T)-1}
   \\
   &=2m_T\frac{T+\Delta_n}{T+\Delta_n-2m_T}
   \\
   &\leq 3m_T,
\end{align*}
where we used $m_T\leq T/6<(T+\Delta_n)/6$. 
Thus \eqref{eq:berdiscint} holds true with $\tilde{m}_T.$
Applying the bounds on $\tilde{m}_T$ in \eqref{eq:berdiscint} together with the mixing property \eqref{eq: mixing} completes the proof.
\end{proof}
For the derivation of our results we require the following event to occur \begin{align*}
 \mathfrak{C}_0\coloneq \{\Vert\hat{\C}_n(B)- \C_\infty(B)\Vert\leq \frac{\lambda_{\min}(\C_\infty(B))}{2}\}. 
\end{align*} Straightforward calculations show that $\lambda_{\min}(\hat{\C}_n(B))\geq \lambda_{\min}(\C_\infty(B))/2$ holds true on $\mathfrak{C}_0,$ which is one of the reasons why we are interested in this event.
The probability of $\mathfrak{C}_0$ is simple to bound by Proposition \ref{prop: REP disc} for large enough values of $T$. The lower bound on $T$ depends on $b$, the radius of the localizing set $B$. As explained in Section \ref{subsec: est}, $b$ should be chosen of the order $\sqrt{d}$.
\begin{corollary}\label{cor:rep disc}
Assume $b\geq 1,$ and let $\ep\in(0,1)$ be given. 
Then for any
\[T\geq T_0(\ep)\coloneq \Bigg(\frac{52b\lambda_{\max}(\C_\infty(B))}{3 \lambda_{\min}(\C_\infty(B))^2}\log\Big(\frac{8d}{\ep}\Big)\lor\frac{12b}{c_{\mix,2}}\log\Big(\frac{144b^2c_{\mix,1}}{c_{\mix,2}\ep}\Big)\Bigg)^2\lor 36\Delta_n^2b^2, \]
the following bound holds true
  \begin{align*}
      \P\Big(\mathfrak{C}_0\Big) \geq 1-\ep.
  \end{align*}
\end{corollary}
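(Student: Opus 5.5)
We apply Proposition \ref{prop: REP disc} with $u=\lambda_{\min}(\C_\infty(B))/2$ and the block length
\[
m_T\coloneq \frac{\sqrt{T}}{6b}.
\]
It then suffices to make each of the two terms on the right-hand side of Proposition \ref{prop: REP disc} at most $\ep/2$.

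First we check that this $m_T$ is admissible, i.e.\ $m_T\in(\Delta_n,T/6)$. The condition $T\geq 36\Delta_n^2b^2$ gives $\sqrt T\geq 6b\Delta_n$, hence $m_T\geq\Delta_n$. If equality occurs, we replace $m_T$ by a slightly larger value, or use that the proof of Proposition \ref{prop: REP disc} only needs $m_T\geq\Delta_n$. The bound $m_T\leq T/6$ is equivalent to $\sqrt T\geq 1/b$. This holds because $b\geq1$ and $T\geq 1$; the latter follows from the logarithmic terms in $T_0(\ep)$ being at least of order one, and can be taken as part of the assumption if needed.

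For the Bernstein term we write $\lambda_{\min}\coloneq\lambda_{\min}(\C_\infty(B))$ and $\lambda_{\max}\coloneq\lambda_{\max}(\C_\infty(B))$. Since $u=\lambda_{\min}/2\leq\lambda_{\max}/2$, we have $6\lambda_{\max}+u\leq \tfrac{13}{2}\lambda_{\max}$. The exponent is therefore at least
\[
\frac{T\lambda_{\min}^2/4}{4b^2m_T\cdot\tfrac{13}{2}\lambda_{\max}}=\frac{T\lambda_{\min}^2}{104\, b^2m_T\lambda_{\max}}=\frac{6\sqrt T\lambda_{\min}^2}{104\, b\lambda_{\max}}.
\]
Requiring this to exceed $\log(8d/\ep)$ gives $\sqrt T\geq \frac{52b\lambda_{\max}}{3\lambda_{\min}^2}\log(8d/\ep)$, which is exactly the first entry of $T_0(\ep)$. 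Under this condition the term is at most $4d\cdot\ep/(8d)=\ep/2$.

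For the mixing term we need $2c_{\mix,1}\frac{T}{m_T}\exp(-c_{\mix,2}m_T)\leq\ep/2$, where $T/m_T=6b\sqrt T$. Set $x\coloneq c_{\mix,2}m_T=c_{\mix,2}\sqrt T/(6b)$, so that $\sqrt T=6bx/c_{\mix,2}$. The requirement becomes
\[
x\,e^{-x}\leq\frac{c_{\mix,2}\,\ep}{72\,b^2c_{\mix,1}}\eqqcolon\delta .
\]
This is the only step with some friction. We use the elementary fact that $x\geq 2\log(1/\delta)$ implies $xe^{-x}\leq\delta$, valid when $\delta$ is small, e.g.\ $\log(1/\delta)\geq1$. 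Indeed, $e^{-x/2}\leq\delta$ and $xe^{-x/2}\leq 2/e\leq1$. The second entry of $T_0(\ep)$ gives
\[
x\geq 2\log\Big(\frac{144b^2c_{\mix,1}}{c_{\mix,2}\ep}\Big)=2\log(2/\delta),
\]
which is even stronger than needed. It handles the regime of small $\delta$, since $\log(2/\delta)\geq\log 2$ ensures $x$ is bounded below. The extra factor $2$ inside the logarithm absorbs the case where $\delta$ is not small.

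Summing the two bounds yields $\P(\mathfrak C_0^{\mathsf c})\leq\ep$.
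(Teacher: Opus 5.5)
Your proof is the paper's argument: Proposition \ref{prop: REP disc} with $u=\lambda_{\min}(\C_\infty(B))/2$ and $m_T=\sqrt T/(6b)$, the bound $6\lambda_{\max}(\C_\infty(B))+u\le\frac{13}{2}\lambda_{\max}(\C_\infty(B))$, and each term pushed below $\ep/2$. The only variation is that you absorb the factor $\sqrt T$ in the mixing term via $x\e^{-x/2}\le 2/\e$ (valid for every $x\ge0$, with no smallness of $\delta$ needed) instead of the paper's $x\le \e^x$, and this is what makes the constant $144$ in $T_0(\ep)$ suffice, since the cruder step only bounds the mixing term by $\ep$. Two small corrections: because $2c_{\mix,1}\frac{T}{m_T}\e^{-x}=\frac{72b^2c_{\mix,1}}{c_{\mix,2}}\,x\e^{-x}$, the threshold for $\ep/2$ is $\delta=\frac{c_{\mix,2}\ep}{144b^2c_{\mix,1}}$ (not $72$), so the hypothesis gives exactly $x\ge 2\log(1/\delta)$ with no slack, which still yields $x\e^{-x}\le\frac{2}{\e}\delta\le\delta$; and $m_T\le T/6$, i.e.\ $\sqrt T\ge 1/b$, follows from the first entry of $T_0(\ep)$ because $\lambda_{\min}(\C_\infty(B))\le\lambda_{\max}(\C_\infty(B))\le b^2$, so no separate assumption $T\ge1$ is needed.
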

\begin{proof}
Since $b\geq 1,T\geq36\Delta_n^2b^2 ,$ we can apply Proposition \ref{prop: REP disc} with $m_T=\sqrt{T}/(6b),$ as soon as $T\geq 1$. Then we obtain
  \begin{align*}
      &\P\Big(\Vert\hat{\C}_n(B)- \C_\infty(B)\Vert>\frac{\lambda_{\min}(\C_\infty(B))}{2} \Big) 
      \\
      &\leq4d\exp\Big(-\frac{T \lambda_{\min}(\C_\infty(B))^2}{8b^2m_T(12\lambda_{\max}(\C_\infty(B))+\lambda_{\min}(\C_\infty(B)))}\Big) +2c_{\mix,1}\frac{T}{m_T}\exp(-c_{\mix,2}m_T)
         \\
      &\leq4d\exp\Big(-\frac{T \lambda_{\min}(\C_\infty(B))^2}{104b^2m_T\lambda_{\max}(\C_\infty(B))}\Big) +2c_{\mix,1}\frac{T}{m_T}\exp(-c_{\mix,2}m_T)
             \\
      &=4d\exp\Big(-\frac{3\sqrt{T} \lambda_{\min}(\C_\infty(B))^2}{52b\lambda_{\max}(\C_\infty(B))}\Big) +12bc_{\mix,1}\sqrt{T}\exp\Big(-c_{\mix,2}\frac{\sqrt{T}}{6b}\Big)
            \\
      &\leq4d\exp\Big(-\frac{3\sqrt{T} \lambda_{\min}(\C_\infty(B))^2}{52b\lambda_{\max}(\C_\infty(B))}\Big) +144b^2\frac{c_{\mix,1}}{c_{\mix,2}}\exp\Big(-c_{\mix,2}\frac{\sqrt{T}}{12b}\Big)
  \end{align*}  
  where we used the elementary inequality $x\leq\exp(x),$ which is valid for all $x\in\R,$ in the last step. Hence, if $T\geq T_0(\ep)\,$ it holds 
  \begin{align*}
    \P(\mathfrak{C}_0)=  \P\Big(\Vert\hat{\C}_n(B)- \C_\infty(B)\Vert\leq \frac{\lambda_{\min}(\C_\infty(B))}{2} \Big) \geq 1-\ep,
  \end{align*}
  which concludes the proof.
\end{proof}
Corollary \ref{cor:rep disc} concerns the empirical covariance matrix $\hat{\C}_n(B)$ without truncation of the increments. However, our results depend on the truncated empirical covariance matrix $\hat{\C}^\eta_n(B).$ The following result uses the matrix Freedman inequality (see \cite{matfreed}) to show that a sufficient concentration of $\hat{C}^\eta_n(B)$ follows under mild assumptions if $\mathfrak{C}_0$ holds true.
\begin{proposition}\label{prop:rep disc trunc}
Assume $b\geq 1$ and let $\ep\in(0,1)$ be given.  Define the events 
  \begin{align*}
       \mathfrak{C}^+_\eta&\coloneq \Big\{\lambda_{\max}(\hat{\C}^\eta_n(B))\leq 2\lambda_{\max}(\C_\infty(B))\P(\Vert \Delta \tilde{Z}_1 \Vert \leq \eta_+))\Big\}\cap \mathfrak{C}_0 ,
       ,
       \\
       \mathfrak{C}^-_\eta&\coloneq \Big\{\lambda_{\min}(\hat{\C}^\eta_n(B))\geq \frac{1}{4}\lambda_{\min}(\C_\infty(B))\P(\Vert \Delta \tilde{Z}_1 \Vert \leq \eta_-)\Big\}\cap \mathfrak{C}_0,
       \\
       \mathfrak{C}&\coloneq \mathfrak{C}^+_\eta\cap \mathfrak{C}^-_\eta,
  \end{align*}
  where $\eta_+$ and $\eta_-$ are defined in Lemma \ref{lemma: ou jumps, levy jumps}.
 Then for any 
 \[T\geq T_1(\ep,\eta)\coloneq T_0(\ep/2)\lor 48\Delta_nb^2 \log(2d/\ep)\frac{\P(\Vert \Delta \tilde{Z}_1 \Vert > \eta_-)\kappa(\C_\infty(B))+\P(\Vert \Delta \tilde{Z}_1 \Vert \leq \eta_-)}{\lambda_{\min}(\C_\infty(B))\P(\Vert \Delta \tilde{Z}_1 \Vert \leq \eta_-))} \]
 it holds
 \[\P(\mathfrak{C})\geq 1-\ep.\]
\end{proposition}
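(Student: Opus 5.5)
Throughout, I take from Lemma \ref{lemma: ou jumps, levy jumps} (not reproduced here) only the sandwich
\[
\1_{\{\Vert\Delta\tilde Z_i\Vert\le\eta_-\}}\le \1_{\{\Vert\Delta X_i\Vert<\eta\}}\le \1_{\{\Vert\Delta\tilde Z_i\Vert\le\eta_+\}}\quad\text{on } \{X_{t_{i-1}}\in B\}.
\]
This is what the lemma should give, since $\Delta X_i=(\exp(-\Delta_n\A_0)-\mathbb I_d)X_{t_{i-1}}+\Delta\tilde Z_i$ and the deterministic part has norm at most $b(\exp(\Delta_n\Vert\A_0\Vert)-1)\le\eta/2$.

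For any $v\in\R^d$ this gives
\[
\hat\C_n(B)^{(\eta_-)}\preceq\hat\C^\eta_n(B)\preceq\hat\C_n(B)^{(\eta_+)},\qquad \hat\C_n(B)^{(\eta')}\coloneq\frac{\Delta_n}{T}\sum_{i=1}^n X_{t_{i-1}}X_{t_{i-1}}^\top\1_B(X_{t_{i-1}})\1_{\{\Vert\Delta\tilde Z_i\Vert\le\eta'\}}.
\]
So it suffices to bound $\lambda_{\max}$ of the upper matrix and $\lambda_{\min}$ of the lower one.

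The key structural fact is that $\Delta\tilde Z_i$ is independent of $\mathcal F_{t_{i-1}}$ and has the same law as $\Delta\tilde Z_1$, because it is an integral over the increments of $\Z$ on $(t_{i-1},t_i]$. Writing $p_\pm\coloneq\P(\Vert\Delta\tilde Z_1\Vert\le\eta_\pm)$, I decompose
\[
\hat\C_n(B)^{(\eta_\pm)}=p_\pm\hat\C_n(B)+\mathbf M_n^{\pm},\qquad \mathbf M_n^{\pm}\coloneq\frac{\Delta_n}{T}\sum_{i=1}^n X_{t_{i-1}}X_{t_{i-1}}^\top\1_B(X_{t_{i-1}})\big(\1_{\{\Vert\Delta\tilde Z_i\Vert\le\eta_\pm\}}-p_\pm\big).
\]
Here $\mathbf M_n^\pm$ is a matrix martingale with respect to $(\mathcal F_{t_i})$. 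Its summands are bounded in operator norm by $\Delta_n b^2/T$. Its predictable quadratic variation is
\[
\frac{\Delta_n^2}{T^2}\,p_\pm(1-p_\pm)\sum_{i=1}^n \Vert X_{t_{i-1}}\Vert^2\1_B\,X_{t_{i-1}}X_{t_{i-1}}^\top\1_B\preceq\frac{\Delta_n b^2}{T}\,p_\pm(1-p_\pm)\,\hat\C_n(B).
\]
On $\mathfrak C_0$ we have $\hat\C_n(B)\preceq\frac32\lambda_{\max}(\C_\infty(B))\mathbb I_d$, so the variance proxy is at most
\[
\sigma^2\coloneq\frac{3\Delta_n b^2}{2T}\,\lambda_{\max}(\C_\infty(B))\,p(1-p).
\]
The matrix Freedman inequality \cite{matfreed} then gives
\[
\P\big(\{\Vert\mathbf M_n^\pm\Vert\ge t\}\cap\mathfrak C_0\big)\le 2d\exp\Big(-\frac{t^2/2}{\sigma^2+\Delta_n b^2t/(3T)}\Big).
\]

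For the lower event, on $\mathfrak C_0$ we have $\lambda_{\min}(p_-\hat\C_n(B))\ge p_-\lambda_{\min}(\C_\infty(B))/2$. It therefore suffices to have
\[
\Vert\mathbf M_n^-\Vert\le t\coloneq \tfrac14 p_-\lambda_{\min}(\C_\infty(B)).
\]
Plugging this $t$ into Freedman, with $1-p_-=\P(\Vert\Delta\tilde Z_1\Vert>\eta_-)$, the exponent becomes of order
\[
\frac{T\,p_-\lambda_{\min}(\C_\infty(B))}{\Delta_n b^2\big((1-p_-)\kappa(\C_\infty(B))+p_-\big)},
\]
up to constants. This is exactly the shape of the second term in $T_1(\ep,\eta)$, so the failure probability is at most $\ep/4$ once $T\ge T_1$, after tuning the constant 48. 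For the upper event, the plan is not a second Freedman tail: on $\mathfrak C_0$, the positive-semidefinite ordering above gives directly
\[
\hat\C_n(B)^{(\eta_+)}\preceq\hat\C_n(B)\preceq\tfrac32\lambda_{\max}(\C_\infty(B))\,\mathbb I_d.
\]
Obtaining the stated bound $2\lambda_{\max}(\C_\infty(B))\,p_+$ on $\mathfrak C^+_\eta$, with the factor $p_+$, is however not delivered by this ordering: I would want either Freedman with $t=\frac12 p_+\lambda_{\max}(\C_\infty(B))$, or a monotonicity argument in $\eta$ that I have not yet identified, and which of these is needed depends on how $\eta_+$ is defined in Lemma \ref{lemma: ou jumps, levy jumps}. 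This step is a genuine gap in the plan. Finally, $T\ge T_0(\ep/2)$ and Corollary \ref{cor:rep disc} give $\P(\mathfrak C_0^{\mathsf c})\le\ep/2$, and a union bound yields $\P(\mathfrak C)\ge1-\ep$.

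The main obstacle is the upper event just described. A second, technical obstacle is applying Freedman on the event $\mathfrak C_0$, since the variance bound holds only there. To handle it, I would use the stopped form of Freedman: bound $\P(\Vert\mathbf M_n\Vert\ge t,\ \Vert\mathbf W_n\Vert\le\sigma^2)$, where $\mathbf W_n$ is the predictable quadratic variation, and note that $\mathfrak C_0\subseteq\{\Vert\mathbf W_n\Vert\le\sigma^2\}$. I would also apply the inequality to $\pm\mathbf M_n$ to control both eigenvalue tails, which is where the factor $2d$ comes from.
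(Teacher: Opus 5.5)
Your lower-event argument is the paper's. It uses the same centering $\hat{\C}_n(B)^{(\eta_\pm)}=p_\pm\hat{\C}_n(B)+\mathbf{M}_n^{\pm}$, the same variance proxy on $\mathfrak{C}_0$, and matrix Freedman with threshold $\frac14 p_-\lambda_{\min}(\C_\infty(B))$. (The paper bounds the summands of $-\mathbf{M}_n^-$ one-sidedly by $\frac{\Delta_n}{T}b^2p_-$ rather than by $\frac{\Delta_n}{T}b^2$. That is what produces exactly the $+p_-$ in the numerator of $T_1(\ep,\eta)$. Your cruder bound only costs a constant, because $(1-p_-)\kappa(\C_\infty(B))+p_-\ge 1$.)

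The genuine gap is the upper event, which you leave open, and the route you lean towards cannot close it. The ordering $\hat{\C}_n(B)^{(\eta_+)}\preceq\hat{\C}_n(B)$ gives only $\frac32\lambda_{\max}(\C_\infty(B))$ on $\mathfrak{C}_0$. That is at most $2p_+\lambda_{\max}(\C_\infty(B))$ only when $p_+\ge\frac34$.

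What is needed is the second Freedman bound you mention, which is exactly what the paper does. On $\mathfrak{C}_0$ one has $p_+\lambda_{\max}(\hat{\C}_n(B))\le\frac32p_+\lambda_{\max}(\C_\infty(B))$, so it suffices that $\lambda_{\max}(\mathbf{M}_n^+)\le\frac12p_+\lambda_{\max}(\C_\infty(B))$. Use the one-sided bound $\lambda_{\max}\big(X_{t_{i-1}}X_{t_{i-1}}^\top\1_B(X_{t_{i-1}})(\1_{\{\Vert\Delta\tilde{Z}_i\Vert\le\eta_+\}}-p_+)\big)\le b^2(1-p_+)$, times $\Delta_n/T$ in your normalization, together with your variance proxy. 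Freedman then gives a failure probability at most $d\exp\big(-\frac{3T\lambda_{\max}(\C_\infty(B))p_+}{40\Delta_nb^2(1-p_+)}\big)$.

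This needs no extra condition on $T$. The only thing you need from Lemma \ref{lemma: ou jumps, levy jumps} is $\eta_+=\eta+b(\e^{\Delta_n\Vert\A_0\Vert}-1)\ge\eta_-$, which gives $p_+\ge p_-$ and $1-p_+\le 1-p_-$. Combined with $\lambda_{\max}\ge\lambda_{\min}$ and $\kappa\ge 1$, this yields
\[
\frac{1-p_+}{\lambda_{\max}(\C_\infty(B))\,p_+}\le\frac{(1-p_-)\,\kappa(\C_\infty(B))}{\lambda_{\min}(\C_\infty(B))\,p_-},
\]
and the right-hand side is dominated by the second term of $T_1(\ep,\eta)$. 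Your two-sided summand bound would again only add an absorbable constant. With this step inserted, your union bound with $\P(\mathfrak{C}_0^{\mathsf{c}})\le\ep/2$ finishes the proof as in the paper.
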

\begin{proof}
In order to ease the presentation, we introduce the following notation for $a\in\R$
  \begin{align*}
  \1^c_{\{\Vert \Delta \tilde{Z}_i \Vert > a\}} &\coloneq \1_{\{\Vert \Delta \tilde{Z}_i \Vert > a\}}-\P(\Vert \Delta \tilde{Z}_i \Vert > a),\quad i\in[n].
  \end{align*}
  Since the $\Delta\tilde{Z}_i$ are i.i.d.\ random variables, the triangle inequality implies together with Lemma \ref{lemma: ou jumps, levy jumps}
  \begin{align*}
   &\lambda_{\max}(\hat{\C}^\eta_n(B))
   \\
   &\leq \lambda_{\max}\Big(\frac{1}{n}
\sum_{i=1}^n
X_{t_{i-1}} X_{t_{i-1}}^\top
\1^c_{\{\Vert \Delta \tilde{Z}_i \Vert \leq \eta_+\}} \1_B \left( X_{t_{i-1}} \right)\Big) +\P(\Vert \Delta \tilde{Z}_1 \Vert \leq  \eta_+))\lambda_{\max}(\hat{\C}_n(B))
\\
   &\leq \lambda_{\max}\Big(\frac{1}{n}
\sum_{i=1}^n
X_{t_{i-1}} X_{t_{i-1}}^\top
\1^c_{\{\Vert \Delta \tilde{Z}_i \Vert \leq \eta_+\}} \1_B \left( X_{t_{i-1}} \right)\Big) \\&\quad+\P(\Vert \Delta \tilde{Z}_1 \Vert \leq  \eta_+))(\Vert \hat{\C}_n(B)-\C_\infty(B)\Vert+\lambda_{\max}(\C_\infty(B)))
  \end{align*}
    and similarly
    \begin{align*}
   &\lambda_{\min}(\hat{\C}^\eta_n(B))
   \\
   &\geq \lambda_{\min}\Big(\frac{1}{n}
\sum_{i=1}^n
X_{t_{i-1}} X_{t_{i-1}}^\top
\1^c_{\{\Vert \Delta \tilde{Z}_i \Vert \leq \eta_-\}}\1_B \left( X_{t_{i-1}} \right)\Big) +\P(\Vert \Delta \tilde{Z}_1 \Vert \leq  \eta_-)\lambda_{\min}(\hat{\C}_n(B))
  \\
   &\geq \lambda_{\min}\Big(\frac{1}{n}
\sum_{i=1}^n
X_{t_{i-1}} X_{t_{i-1}}^\top
\1^c_{\{\Vert \Delta \tilde{Z}_i \Vert \leq \eta_-\}}\1_B \left( X_{t_{i-1}} \right)\Big) \\&\quad+\P(\Vert \Delta \tilde{Z}_1 \Vert \leq  \eta_-)(\lambda_{\min}(\C_\infty(B))-\Vert \hat{\C}_n(B)-\C_\infty(B)\Vert).
  \end{align*}
By the above bounds we get for any $u>0$
  \begin{align}
  \begin{split}\label{eq: tr err 1}
     &\P\Big(\lambda_{\max}(\hat{\C}^\eta_n(B))>u+\frac{3}{2}\lambda_{\max}(\C_\infty(B))\P(\Vert \Delta \tilde{Z}_1 \Vert \leq \eta_+)),\mathfrak{C}_0 \Big)
  \\
  &\leq \P\Bigg(\lambda_{\max}\Big(
\sum_{i=1}^n
X_{t_{i-1}} X_{t_{i-1}}^\top
\1^c_{\{\Vert \Delta \tilde{Z}_i \Vert \leq \eta_+\}}\1_B \left( X_{t_{i-1}} \right)\Big)>nu,\mathfrak{C}_0\Bigg), 
  \end{split}
  \end{align}
  and
   \begin{align}
  \begin{split}\label{eq: tr err 2}
     &\P\Big(\lambda_{\min}(\hat{\C}^\eta_n(B))<-u+\frac{1}{2}\lambda_{\min}(\C_\infty(B))\P(\Vert \Delta \tilde{Z}_1 \Vert \leq \eta_-)),\mathfrak{C}_0 \Big)
  \\
  &\leq \P\Bigg(\lambda_{\min}\Big(
\sum_{i=1}^n
X_{t_{i-1}} X_{t_{i-1}}^\top
\1^c_{\{\Vert \Delta \tilde{Z}_i \Vert \leq \eta_-\}}\1_B \left( X_{t_{i-1}} \right)\Big)<-nu,\mathfrak{C}_0 \Bigg)
  \\
  &= \P\Bigg(\lambda_{\max}\Big(-
\sum_{i=1}^n
X_{t_{i-1}} X_{t_{i-1}}^\top
\1^c_{\{\Vert \Delta \tilde{Z}_i \Vert \leq \eta_-\}}\1_B \left( X_{t_{i-1}} \right)\Big)>nu,\mathfrak{C}_0\Bigg).
  \end{split}
  \end{align} 
  Since  $\Delta\tilde{Z}_i$ is independent of $\mathcal{F}_{t_{i-1}}$ for any $i\in[n],$ and because the $\Delta\tilde{Z}_i$ are i.i.d.\ it follows that $\sum_{i=1}^n
X_{t_{i-1}} X_{t_{i-1}}^\top
\1^c_{\{\Vert \Delta \tilde{Z}_i \Vert > a\}}\1_B \left( X_{t_{i-1}} \right)$ is a matrix martingale for any choice of $a\in\R$. Furthermore, for any $i\in[n]$ it holds almost surely that \[\lambda_{\max}(X_{t_{i-1}} X_{t_{i-1}}^\top
\1^c_{\{\Vert \Delta \tilde{Z}_i \Vert \leq \eta_+\}}\1_B \left( X_{t_{i-1}} \right))\leq b^2(1-\P(\Vert \Delta \tilde{Z}_1 \Vert \leq \eta_+))= b^2\P(\Vert \Delta \tilde{Z}_1 \Vert > \eta_+),\] 
and similarly
\[\lambda_{\max}(-X_{t_{i-1}} X_{t_{i-1}}^\top
\1^c_{\{\Vert \Delta \tilde{Z}_i \Vert \leq \eta_-\}}\1_B \left( X_{t_{i-1}} \right))\leq b^2\P(\Vert \Delta \tilde{Z}_1 \Vert \leq \eta_-).\] 
Additionally we obtain for any $a\in\R$
\begin{align*}
 &   \Vert\sum_{i=1}^n
\E[(X_{t_{i-1}} X_{t_{i-1}}^\top
\1^c_{\{\Vert \Delta \tilde{Z}_i \Vert \leq  a\}}\1_B \left( X_{t_{i-1}} \right))^2\vert\mathcal{F}_{t_{i-1}}]\Vert
\\
&\leq  b^2\E[
(\1^c_{\{\Vert \Delta \tilde{Z}_1 \Vert \leq a \}})^2]\Vert\sum_{i=1}^nX_{t_{i-1}} X_{t_{i-1}}^\top\1_B \left( X_{t_{i-1}} \right)\Vert
\\
&= nb^2\P(\Vert \Delta \tilde{Z}_1 \Vert > a)\P(\Vert \Delta \tilde{Z}_1 \Vert \leq a)\Vert\hat{\C}_n(B)\Vert,
\end{align*}
and thus the set inclusion
\begin{align*}
   \mathfrak{C}_0&\subseteq \Big\{\Vert\sum_{i=1}^n
\E[(X_{t_{i-1}} X_{t_{i-1}}^\top
\1^c_{\{\Vert \Delta \tilde{Z}_i \Vert \leq a \}}\1_B \left( X_{t_{i-1}} \right))^2\vert\mathcal{F}_{t_{i-1}}]\Vert
\\&\quad\leq \frac{3}{2}nb^2\P(\Vert \Delta \tilde{Z}_1 \Vert > a)\P(\Vert \Delta \tilde{Z}_1 \Vert \leq a)\lambda_{\max}(\C_\infty(B))\Big\},
\end{align*}
holds true for any $a\in\R$. The above allows us to apply Freedman's inequality for matrix martingales (see \cite{matfreed}), which gives for any $u>0$
\begin{align*}
   &\P\Bigg(\lambda_{\max}\Big(
\sum_{i=1}^n
X_{t_{i-1}} X_{t_{i-1}}^\top
\1^c_{\{\Vert \Delta \tilde{Z}_i \Vert \leq  \eta_+\}}\1_B \left( X_{t_{i-1}} \right)\Big)>nu,\mathfrak{C}_0\Bigg)
\\
&\leq d\exp\Bigg(-\frac{nu^2}{b^2\P(\Vert \Delta \tilde{Z}_1 \Vert > \eta_+)(3\P(\Vert \Delta \tilde{Z}_1 \Vert \leq \eta_+)\lambda_{\max}(\C_\infty(B))+\frac{2}{3}u)}\Bigg),
\end{align*}
and
\begin{align*}
   & \P\Bigg(\lambda_{\max}\Big(-
\sum_{i=1}^n
X_{t_{i-1}} X_{t_{i-1}}^\top
\1^c_{\{\Vert \Delta \tilde{Z}_i \Vert \leq \eta_-\}}\1_B \left( X_{t_{i-1}} \right)\Big)>nu,\mathfrak{C}_0\Bigg)
\\
&\leq d\exp\Bigg(-\frac{nu^2}{b^2\P(\Vert \Delta \tilde{Z}_1 \Vert \leq \eta_-)(3\P(\Vert \Delta \tilde{Z}_1 \Vert > \eta_-)\lambda_{\max}(\C_\infty(B))+\frac{2}{3}u)}\Bigg).
\end{align*}
Thus combining \eqref{eq: tr err 1} and \eqref{eq: tr err 2} with the above inequalities, we get
\begin{align*}
    &\P\Big(\lambda_{\max}(\hat{\C}^\eta_n(B))>2\lambda_{\max}(\C_\infty(B))\P(\Vert \Delta \tilde{Z}_1 \Vert \leq \eta_+)),\mathfrak{C}_0 \Big)
    \\
    &\leq d\exp\Bigg(-\frac{3n\lambda_{\max}(\C_\infty(B))\P(\Vert \Delta \tilde{Z}_1 \Vert \leq \eta_+))}{40b^2\P(\Vert \Delta \tilde{Z}_1 \Vert > \eta_+)}\Bigg)
    \\
    &= d\exp\Bigg(-\frac{3T\lambda_{\max}(\C_\infty(B))\P(\Vert \Delta \tilde{Z}_1 \Vert \leq \eta_+))}{40\Delta_nb^2\P(\Vert \Delta \tilde{Z}_1 \Vert > \eta_+)}\Bigg)
\end{align*}
and
\begin{align*}
    &\P\Big(\lambda_{\min}(\hat{\C}^\eta_n(B))<\frac{1}{4}\lambda_{\min}(\C_\infty(B))\P(\Vert \Delta \tilde{Z}_1 \Vert \leq \eta_-)),\mathfrak{C}_0\Big)
    \\
    &\leq d\exp\Bigg(-\frac{n\lambda_{\min}(\C_\infty(B))\P(\Vert \Delta \tilde{Z}_1 \Vert \leq \eta_-))}{16b^2(3\P(\Vert \Delta \tilde{Z}_1 \Vert > \eta_-)\kappa(\C_\infty(B))+\frac{1}{6}\P(\Vert \Delta \tilde{Z}_1 \Vert \leq \eta_-))}\Bigg)
    \\
    &= d\exp\Bigg(-\frac{T\lambda_{\min}(\C_\infty(B))\P(\Vert \Delta \tilde{Z}_1 \Vert \leq \eta_-))}{16\Delta_nb^2(3\P(\Vert \Delta \tilde{Z}_1 \Vert > \eta_-)\kappa(\C_\infty(B))+\frac{1}{6}\P(\Vert \Delta \tilde{Z}_1 \Vert \leq \eta_-))}\Bigg).
\end{align*}
The assertion now follows by observing that $T\geq T_1(\ep)$
implies by Corollary \ref{cor:rep disc}
\begin{align*}
   & \P(\mathfrak{C})
   \\
   &\geq \P(\mathfrak{C}^+_\eta)+ \P(\mathfrak{C}^-_\eta)- \P(\mathfrak{C}_0)
   \\
   &\geq \P(\mathfrak{C}_0)-d\exp\Bigg(-\frac{3T\lambda_{\max}(\C_\infty(B))\P(\Vert \Delta \tilde{Z}_1 \Vert \leq \eta_+))}{40\Delta_nb^2\P(\Vert \Delta \tilde{Z}_1 \Vert > \eta_+)}\Bigg)
   \\
   &\quad-d\exp\Bigg(-\frac{T\lambda_{\min}(\C_\infty(B))\P(\Vert \Delta \tilde{Z}_1 \Vert \leq \eta_-))}{16\Delta_nb^2(3\P(\Vert \Delta \tilde{Z}_1 \Vert > \eta_-)\kappa(\C_\infty(B))+\frac{1}{6}\P(\Vert \Delta \tilde{Z}_1 \Vert \leq \eta_-))}\Bigg)
   \\
   &\geq 1-\ep.
\end{align*}
\end{proof}
\subsection{Stochastic Error}
In this section we derive a uniform high probability estimate for the stochastic error
\[\mathcal{S}(\M) \coloneq \sum_{i=1}^n \left(\M X_{t_{i-1}}\right)^\top \Delta \tilde{Z}_i \1_{\{\Vert \Delta X_i \Vert< \eta\}} \1_B \left( X_{t_{i-1}} \right),\quad \M\in\R^{d\times d}.\]
The proof of the estimate relies on concentration inequalities for martingales combined with Talagrand's generic chaining device. However, the stochastic error $\mathcal{S}(\cdot)$ is not a martingale due to the truncation of the increments. We therefore firstly derive the following decomposition, which bounds the stochastic error by the sum of a martingale and another term, to which we refer as truncation bias.
\begin{lemma}\label{lemma: st decomp}
Assume that $\eta_->0$. Then the following bound holds true for any $c>0,$ 
\begin{align*}
&\frac{2}{T} \vert\mathcal{S}(\M) \vert
\\
&\leq \frac{2}{T}\vert\sum_{i=1}^n \left(\M X_{t_{i-1}}\right)^\top \Big(\Delta \tilde{Z}_i\1_{\{\Vert \Delta X_i \Vert< \eta\}} -\E[\Delta \tilde{Z}_i\1_{\{\Vert \Delta X_i \Vert< \eta\}}   \vert\mathcal{F}_{t_{i-1}}]\Big)\1_B \left( X_{t_{i-1}} \right)\vert+\frac{1}{c}\Vert \M X\Vert_{l^2_n(B,X)}^2
\\
&\quad +\frac{c}{\Delta_n} \lambda_{\max}(\C+\boldsymbol{\nu}_2)\exp(\Delta_n\Vert\A_0\Vert)\P(\Vert \Delta  \tilde{Z}_1\Vert\geq \eta_- ),
\end{align*}
where $\eta_-$ is defined in Lemma \ref{lemma: ou jumps, levy jumps}.
\end{lemma}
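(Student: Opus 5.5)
We add and subtract the conditional expectation. Writing $\xi_i\coloneq\Delta\tilde Z_i\1_{\{\Vert\Delta X_i\Vert<\eta\}}$, we have
\[
\mathcal S(\M)=\sum_{i=1}^n(\M X_{t_{i-1}})^\top\big(\xi_i-\E[\xi_i\vert\mathcal F_{t_{i-1}}]\big)\1_B(X_{t_{i-1}})+\sum_{i=1}^n(\M X_{t_{i-1}})^\top\E[\xi_i\vert\mathcal F_{t_{i-1}}]\1_B(X_{t_{i-1}}).
\]
The first sum, after applying the triangle inequality to $\frac2T\vert\cdot\vert$, is exactly the martingale term in the statement. It therefore remains to bound the second sum, the truncation bias. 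For this we use Cauchy--Schwarz and Young's inequality with weights $\Delta_n/c$ and $c/\Delta_n$, exactly as in the proof of Proposition \ref{prop: discretizationdex}. This gives
\[
\frac2T\Big\vert\sum_{i}(\M X_{t_{i-1}})^\top\E[\xi_i\vert\mathcal F_{t_{i-1}}]\1_B\Big\vert\le\frac1c\Vert\M X\Vert^2_{l^2_n(B,X)}+\frac{c}{T\Delta_n}\sum_{i=1}^n\Vert\E[\xi_i\vert\mathcal F_{t_{i-1}}]\Vert^2\1_B(X_{t_{i-1}}).
\]
Since $n=T/\Delta_n$, it suffices to show that on $\{X_{t_{i-1}}\in B\}$
\[
\Vert\E[\xi_i\vert\mathcal F_{t_{i-1}}]\Vert^2\le\Delta_n\lambda_{\max}(\C+\boldsymbol\nu_2)\exp(\Delta_n\Vert\A_0\Vert)\P(\Vert\Delta\tilde Z_1\Vert\ge\eta_-),
\]
because then the sum is at most $n$ times this quantity, and dividing by $T\Delta_n$ produces the factor $c/\Delta_n$.

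For the conditional expectation, note that $\Delta\tilde Z_i$ is independent of $\mathcal F_{t_{i-1}}$ and centred, since $\Z$ is a square-integrable martingale by \ref{ass: ergodicity} and $\Delta\tilde Z_i$ is a stochastic integral of a deterministic integrand. Hence
\[
\E[\xi_i\vert\mathcal F_{t_{i-1}}]=-\E[\Delta\tilde Z_i\1_{\{\Vert\Delta X_i\Vert\ge\eta\}}\vert\mathcal F_{t_{i-1}}].
\]
By Lemma \ref{lemma: basic ou}, $\Delta X_i=(\exp(-\Delta_n\A_0)-\mathbb I_d)X_{t_{i-1}}+\Delta\tilde Z_i$. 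On $B$, Lemma \ref{lemma: ou jumps, levy jumps}, which uses $\eta\ge2b(\exp(\Delta_n\Vert\A_0\Vert)-1)$, yields the inclusion $\{\Vert\Delta X_i\Vert\ge\eta\}\subseteq\{\Vert\Delta\tilde Z_i\Vert\ge\eta_-\}$; here we use $\eta_->0$. The definition of $\eta_-$ in that lemma is not shown above, so this inclusion is exactly what we take from it.

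Next we take the norm as a supremum over unit vectors $u$ and apply conditional Cauchy--Schwarz:
\[
\Vert\E[\xi_i\vert\mathcal F_{t_{i-1}}]\Vert^2\le\sup_{\Vert u\Vert=1}\E[(u^\top\Delta\tilde Z_i)^2]\,\P(\Vert\Delta\tilde Z_1\Vert\ge\eta_-).
\]
The inclusion from the previous step is used inside the indicator, before Cauchy--Schwarz. The $\P$ factor then depends only on $\Delta\tilde Z_i$, which has the same law as $\Delta\tilde Z_1$ because the increments are independent of the past and i.i.d.

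Finally, the Itô isometry for Lévy integrals gives
\[
\E[\Delta\tilde Z_1\Delta\tilde Z_1^\top]=\int_0^{\Delta_n}\e^{-s\A_0}(\C+\boldsymbol\nu_2)\e^{-s\A_0^\top}\d s.
\]
Its largest eigenvalue is at most $\Delta_n\lambda_{\max}(\C+\boldsymbol\nu_2)\sup_{s\le\Delta_n}\Vert\e^{-s\A_0}\Vert^2\le\Delta_n\lambda_{\max}(\C+\boldsymbol\nu_2)\exp(2\Delta_n\Vert\A_0\Vert)$. This exponent is $2\Delta_n\Vert\A_0\Vert$, not the $\Delta_n\Vert\A_0\Vert$ claimed in the statement, so the statement's constant is not recovered directly.

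The main obstacle is twofold. One part is the set inclusion from Lemma \ref{lemma: ou jumps, levy jumps}. The other is this factor of two in the exponent. To remove it, I would try to sharpen the covariance estimate, either by bounding the integrand eigenvalue by $\Vert\e^{-s\A_0}\Vert^2$ more carefully or by reweighting the Young split. If neither works, I would accept the constant $\exp(2\Delta_n\Vert\A_0\Vert)$, which differs harmlessly in the high-frequency regime.
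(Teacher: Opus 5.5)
Your argument is the paper's proof. It uses the same add-and-subtract decomposition, Young's inequality with weights $\Delta_n/c$ and $c/\Delta_n$, and conditional Cauchy--Schwarz combined with the inclusion from Lemma \ref{lemma: ou jumps, levy jumps}. It finishes with a second-moment bound for $\Delta\tilde Z_1$, for which the paper cites Lemma \ref{lemma: ew z}.

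The inclusion is not a real obstacle. On $\{X_{t_{i-1}}\in B\}$, apply the triangle inequality to $\Delta X_i=(\e^{-\Delta_n\A_0}-\mathbb{I}_d)X_{t_{i-1}}+\Delta\tilde Z_i$ and use $\Vert\e^{-\Delta_n\A_0}-\mathbb{I}_d\Vert\le\e^{\Delta_n\Vert\A_0\Vert}-1$. This shows that $\Vert\Delta X_i\Vert\ge\eta$ forces $\Vert\Delta\tilde Z_i\Vert\ge\eta_-$, for every $\eta>0$. The condition $\eta\ge 2b(\e^{\Delta_n\Vert\A_0\Vert}-1)$ is only needed later, to get $\eta_-\ge\eta/2$.

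Do not spend effort sharpening the exponent. The paper obtains $\exp(\Delta_n\Vert\A_0\Vert)$ only through the last step of Lemma \ref{lemma: ew z}, which asserts $\max_{\Vert u\Vert=1}\int_0^{\Delta_n}\Vert\e^{-(\Delta_n-s)\A_0^\top}u\Vert^2\d s\le\Delta_n\e^{\Delta_n\Vert\A_0\Vert}$. The bound $\Vert\e^{-r\A_0}\Vert\le\e^{r\Vert\A_0\Vert}$ only yields $\Delta_n\e^{x}\sinh(x)/x\ge\Delta_n\e^{x}$ with $x=\Delta_n\Vert\A_0\Vert$. Moreover, the asserted inequality is false in general for non-normal $\A_0\in M_+(\R^d)$.

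For a counterexample, take $\C=\mathbb{I}_d$, $\nu=0$, $\Delta_n=1$, and $\A_0=\epsilon\mathbb{I}_d+\mathbf{N}$ with $\mathbf{N}$ the lower shift matrix. Test with $u\propto((-\rho)^{j})_{j}$, which is an approximate eigenvector of $\mathbf{N}^\top$ for the eigenvalue $-\rho$. Then $\Vert\e^{-r\A_0^\top}u\Vert\approx\e^{r(\rho-\epsilon)}\Vert u\Vert$. For large $d$, small $\epsilon$ and $\rho=0.95$, the integral is about $2.99$, while $\Delta_n\e^{\Delta_n\Vert\A_0\Vert}\approx\e\approx 2.72$.

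Reweighting Young's inequality cannot remove the factor either, since $c$ only trades between the two terms. So $\exp(2\Delta_n\Vert\A_0\Vert)$, or the sharper $\Delta_n(\e^{2x}-1)/(2x)$ if you keep the integral, is what this argument actually delivers, including in the paper's version. You should state the lemma with that factor. It is the same factor already used in Lemma \ref{lemma:prob bound}. It only inflates $\gamma(\Delta_n)$ and the truncation term by $\e^{\Delta_n\Vert\A_0\Vert}$, which is harmless when $\Delta_n\Vert\A_0\Vert$ stays bounded.
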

\begin{proof}
As $\Delta\tilde{Z}_i$ is centered and independent of $\mathcal{F}_{t_{i-1}}$ for any $i\in[n]$, it holds for any $i\in[n]$
\begin{align*}
    &\Delta\tilde{Z}_i \1_{\{\Vert \Delta X_i \Vert< \eta\}} \1_B \left( X_{t_{i-1}} \right)
    \\
    &=\Delta\tilde{Z}_i\1_{\{\Vert \Delta X_i \Vert< \eta\}} -\E[\Delta \tilde{Z}_i\vert \mathcal{F}_{t_{i-1}}]
     \\
    &= \Big(\Delta \tilde{Z}_i\1_{\{\Vert \Delta X_i \Vert< \eta\}} -\E[\Delta \tilde{Z}_i\1_{\{\Vert \Delta X_i \Vert< \eta\}}   \vert\mathcal{F}_{t_{i-1}}]\Big)
 - \E[\Delta \tilde{Z}_i\1_{\{\Vert \Delta X_i \Vert\geq \eta\}} \vert \mathcal{F}_{t_{i-1}}],
\end{align*}
such that we can bound the stochastic error as follows
\begin{align}
\begin{split}\label{eq:st decomp 1}
     \frac{2}{T} \vert\mathcal{S}(\M) \vert
    &=\vert\sum_{i=1}^n \left(\M X_{t_{i-1}}\right)^\top \Delta \tilde{Z}_i \1_{\{\Vert \Delta X_i \Vert< \eta\}} \1_B \left( X_{t_{i-1}} \right)\vert
    \\
    &\leq \frac{2}{T}\vert\sum_{i=1}^n \left(\M X_{t_{i-1}}\right)^\top \Big(\Delta \tilde{Z}_i\1_{\{\Vert \Delta X_i \Vert< \eta\}} -\E[\Delta \tilde{Z}_i\1_{\{\Vert \Delta X_i \Vert< \eta\}}   \vert\mathcal{F}_{t_{i-1}}]\Big)\1_B \left( X_{t_{i-1}} \right)\vert
    \\&\quad + \frac{2}{T}\vert\sum_{i=1}^n \left(\M X_{t_{i-1}}\right)^\top \E[\Delta \tilde{Z}_i\1_{\{\Vert \Delta X_i \Vert\geq \eta\}} \vert \mathcal{F}_{t_{i-1}}]\1_B \left( X_{t_{i-1}} \right)\vert. 
\end{split}
\end{align}
For proving the assertion it now suffices to bound the second summand. By combining Young's and H{\"o}lder's inequalities with the $\mathcal{F}_{t_i}$-measurability of $X_{t_i}$ and the Courant--Fischer theorem, we obtain for any $c>0$
\begin{align*}
    & \frac{2}{T}\vert\sum_{i=1}^n \left(\M X_{t_{i-1}}\right)^\top \E[\Delta \tilde{Z}_i\1_{\{\Vert \Delta X_i \Vert\geq \eta\}} \vert \mathcal{F}_{t_{i-1}}]\1_B \left( X_{t_{i-1}} \right)\vert
    \\
    &\leq \frac{\Delta_n}{Tc}\sum_{i=1}^n \Vert\M X_{t_{i-1}}\Vert^2+\frac{c}{T\Delta_n}\sum_{i=1}^n \E[(\M X_{t_{i-1}}/\Vert\M X_{t_{i-1}}\Vert)^\top\Delta \tilde{Z}_i\1_{\{\Vert \Delta X_i \Vert\geq \eta\}}\1_B \left( X_{t_{i-1}} \right) \vert \mathcal{F}_{t_{i-1}}]^2
    \\
    &\leq  \frac{1}{c}\Vert \M X\Vert_{l^2_n(B,X)}^2+\frac{c}{T\Delta_n}\sum_{i=1}^n \E[((\M X_{t_{i-1}}/\Vert\M X_{t_{i-1}}\Vert)^\top\Delta \tilde{Z}_i)^2\vert \mathcal{F}_{t_{i-1}}]\Vert\P(\Vert \Delta X_i \Vert\geq \eta, X_{t_{i-1}}\in B\vert \mathcal{F}_{t_{i-1}} )  
     \\
    &\leq  \frac{1}{c}\Vert \M X\Vert_{l^2_n(B,X)}^2+\frac{nc}{T\Delta_n} \lambda_{\max}\Big(\E[\Delta \tilde{Z}_0(\Delta \tilde{Z}_0)^\top]\Big)\P(\Vert \Delta  \tilde{Z}_1\Vert\geq \eta_- )  
         \\
    &\leq  \frac{1}{c}\Vert \M X\Vert_{l^2_n(B,X)}^2+\frac{c}{\Delta_n} \lambda_{\max}(\C+\boldsymbol{\nu}_2)\exp(\Delta_n\Vert\A_0\Vert)\P(\Vert \Delta  \tilde{Z}_1\Vert\geq \eta_- ),
\end{align*}
where we also used the stationarity of $\X$ and the increments of $\Z$ together with $n/(T\Delta_n)=\Delta_n^{-2},$ Markov's inequality and Lemmas \ref{lemma: ou jumps, levy jumps} and \ref{lemma: ew z}. This concludes the proof.
\end{proof}
In order to simplify the notation we introduce for $\M\in\R^{d\times d},i\in[n],$
\begin{align*}
  \Delta \Bar{Z}_i&\coloneq \Big(\Delta \tilde{Z}_i\1_{\{\Vert \Delta X_i \Vert< \eta\}} -\E[\Delta \tilde{Z}_i\1_{\{\Vert \Delta X_i \Vert< \eta\}}   \vert\mathcal{F}_{t_{i-1}}]\Big)
  \\
  \xi_i(\M)&\coloneq \left(\M X_{t_{i-1}}\right)^\top\Bar{Z}_i \1_B \left( X_{t_{i-1}} \right)
  \\
  \tilde{\mathcal{S}}(\M) &\coloneq \sum_{i=1}^n \xi_i(\M),
\end{align*}
and additionally we introduce $\mathcal{S}^{(d\times d)-1}\coloneq \{\M\in\R^{d\times d}\colon \Vert\M\Vert_2=1\}$. $\tilde{\mathcal{S}}(\M)$ denotes the martingale part of the upper bound obtained in Lemma \ref{lemma: st decomp}. To obtain a uniform bound for $\tilde{\mathcal{S}}(\M),$ we make use of the results of \cite{vanzanten01}, which show that martingales admit a subgaussian concentration behavior if their (predictable) quadratic variation process is well-behaved. We therefore introduce the event $Q_T$ in \eqref{eq: def q} on which said quadratic variation process is uniformly bounded by $T,$ and show that it occurs with high probability for sufficiently large $T$. The key to the proof is a combination of Freedman's inequality, which shows that the quadratic variation process admits a mixed tail in the sense of \cite{dirk13}, and subsequently applying the concentration inequalities for suprema of stochastic processes admitting a mixed tail in \cite{dirk13}.
\begin{proposition}\label{prop: qt}
Let $\ep\in(0,1)$ be given, and define the event
\begin{align}\label{eq: def q}
   &Q_T
\coloneq\Big\{\sup_{\M\in\mathcal{S}^{(d\times d)-1}}\Big(\sum_{i=1}^n\xi^2_i(\M)+\sum_{i=1}^n\E[\xi^2_i(\M)\vert\mathcal{F}_{t_{i-1}}]\Big)\leq  4\gamma(\Delta_n)T\Big\}\cap \mathfrak{C}, 
\end{align}
where
\[\gamma(\Delta_n)\coloneq\lambda_{\max}(\C+\boldsymbol{\nu}_2)(\lambda_{\max}(\C_\infty(B))\lor 1)\exp(\Delta_n\Vert\A_0\Vert).\] Then there exists a constant $c_{1,\star}>0,$ independent of $d,$ such that for all 
\[T\geq T_\star(\ep,\eta)\coloneq T_1(\ep/2,\eta)\lor c_{1,\star}\gamma^{-1}(\Delta_n) b^2\eta_+^2\Big(d+\sqrt{\log(2/\ep)}\Big)^2, \]
where $\eta_+$ is defined in Lemma \ref{lemma: ou jumps, levy jumps}, it holds
\[\P(Q_T)\geq 1-\ep.\]
\end{proposition}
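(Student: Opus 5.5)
Since $\xi_i(\M)$ is linear in $\M$, I would write
\[
\sum_i\xi_i^2(\M)=\vec(\M)^\top \mathbf{G}\,\vec(\M),\qquad
\mathbf{G}\coloneq\sum_{i=1}^n (X_{t_{i-1}}X_{t_{i-1}}^\top)\otimes(\Delta\bar Z_i\Delta\bar Z_i^\top)\1_B(X_{t_{i-1}}),
\]
and similarly for the predictable part with $\Delta\bar Z_i\Delta\bar Z_i^\top$ replaced by its conditional expectation. The supremum over $\mathcal{S}^{(d\times d)-1}$ is then a largest eigenvalue, i.e.\ a supremum of a quadratic form over the unit sphere of $\R^{d^2}$.

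\emph{Step 1: the predictable part.} Since $\Delta\tilde Z_i$ is independent of $\mathcal{F}_{t_{i-1}}$, the conditional covariance of $\Delta\bar Z_i$ is dominated by $\E[\Delta\tilde Z_1\Delta\tilde Z_1^\top]$. By Lemma \ref{lemma: ew z} this has largest eigenvalue at most $\Delta_n\lambda_{\max}(\C+\boldsymbol{\nu}_2)\exp(\Delta_n\Vert\A_0\Vert)$. Hence
\[
\sum_i\E[\xi_i^2(\M)\vert\mathcal{F}_{t_{i-1}}]\le \Delta_n\lambda_{\max}(\C+\boldsymbol\nu_2)e^{\Delta_n\Vert\A_0\Vert}\, n\,\tr(\M\hat\C_n(B)\M^\top).
\]
On $\mathfrak{C}\subseteq\mathfrak{C}_0$ we have $\lambda_{\max}(\hat\C_n(B))\le\frac32\lambda_{\max}(\C_\infty(B))$, so this term is at most $\frac32\gamma(\Delta_n)T$ deterministically on $\mathfrak{C}$, with $n\Delta_n=T$.

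\emph{Step 2: the quadratic variation part.} I would split $\sum_i\xi_i^2=\sum_i\E[\xi_i^2\vert\mathcal{F}_{t_{i-1}}]+\sum_i(\xi_i^2-\E[\xi_i^2\vert\mathcal{F}_{t_{i-1}}])$. The second sum is a martingale in $i$ for every fixed $\M$. Its increments are bounded, because $\1_B$ gives $\Vert X_{t_{i-1}}\Vert\le b$ and the truncation, via Lemma \ref{lemma: ou jumps, levy jumps}, gives $\Vert\Delta\tilde Z_i\1_{\{\Vert\Delta X_i\Vert<\eta\}}\Vert\le\eta_+$. So $|\xi_i(\M)|\le 2b\eta_+$ for $\Vert\M\Vert_2\le1$, and thus $|\xi_i^2|\le 4b^2\eta_+^2$. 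Its conditional variance is at most $4b^2\eta_+^2\E[\xi_i^2\vert\mathcal{F}_{t_{i-1}}]$, which on $\mathfrak{C}$ sums to at most $6b^2\eta_+^2\gamma T$ by Step 1.

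Applying Freedman's inequality to the difference process $\M\mapsto\sum_i(\xi_i^2(\M)-\xi_i^2(\M'))-(\text{compensator})$ gives a mixed tail in the sense of \cite{dirk13}. Writing $\xi_i^2(\M)-\xi_i^2(\M')=\xi_i(\M-\M')\xi_i(\M+\M')$, the bounds scale with $\Vert\M-\M'\Vert_2$. The result is a subgaussian component with metric $\sqrt{b^2\eta_+^2\gamma T}\,\Vert\cdot\Vert_2$ and a subexponential component with metric $b^2\eta_+^2\Vert\cdot\Vert_2$.

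Dirksen's bound for suprema of mixed-tail processes over the unit sphere of $\R^{d^2}$ then uses $\gamma_2\lesssim d$ and $\gamma_1\lesssim d^2$. It gives, with probability at least $1-\ep/2$,
\[
\sup_\M\Big|\sum_i(\xi_i^2-\E[\xi_i^2\vert\mathcal F_{t_{i-1}}])\Big|\lesssim \sqrt{b^2\eta_+^2\gamma T}\,\big(d+\sqrt{\log(2/\ep)}\big)+b^2\eta_+^2\big(d^2+\log(2/\ep)\big).
\]
Freedman only controls the martingale jointly with the predictable variance. I would handle this by working on $\mathfrak{C}$ and stopping the process, or by using the event-restricted tail bounds as in \cite{dirk13}.

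\emph{Step 3: conclusion.} For $T\ge c_{1,\star}\gamma^{-1}b^2\eta_+^2(d+\sqrt{\log(2/\ep)})^2$, both terms in the Step 2 bound are at most $\frac12\gamma T$. Adding Step 1 twice (once from the split, once from the second sum in $Q_T$) gives a total of at most $\frac32\gamma T+\frac12\gamma T+\frac32\gamma T\le4\gamma T$. Finally, Proposition \ref{prop:rep disc trunc} with $\ep/2$ and $T\ge T_1(\ep/2,\eta)$ gives $\P(\mathfrak{C})\ge1-\ep/2$, and a union bound yields $\P(Q_T)\ge1-\ep$.

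The main obstacle is Step 2: making the chaining argument rigorous when Freedman's tail holds only on the event where the predictable variation is controlled, and obtaining the increment bounds that produce the mixed tail.
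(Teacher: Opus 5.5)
Your proposal is correct and follows the paper's proof essentially step by step. It uses the same bound on the predictable part via Lemma~\ref{lemma: ew z} and $\lambda_{\max}(\hat{\C}_n(B))\le\frac{3}{2}\lambda_{\max}(\C_\infty(B))$ on $\mathfrak{C}$, and the same Freedman-based mixed tail for the increments built from $\xi_i^2(\M_1)-\xi_i^2(\M_2)=\xi_i(\M_1-\M_2)\,\xi_i(\M_1+\M_2)$; the restriction to $\mathfrak{C}$ is handled in the paper exactly as you suggest, by applying Dirksen's theorem to the process multiplied by $\1_{\mathfrak{C}}$, since $\mathfrak{C}$ lies inside the event where the predictable variation is controlled. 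It then uses the same bounds $\gamma_2\lesssim d$ and $\gamma_1\lesssim d^2$ and the same final union bound with Proposition~\ref{prop:rep disc trunc}.

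The paper does not use your opening Kronecker reformulation either. Combined with the matrix Freedman inequality on the $d^2\times d^2$ martingale, it would bypass the chaining entirely: the increments have largest eigenvalue at most $4b^2\eta_+^2$, and the predictable variation is at most $6b^2\eta_+^2\gamma(\Delta_n)T$ on $\mathfrak{C}$. This would improve $(d+\sqrt{\log(2/\ep)})^2$ to $\log(2d^2/\ep)$, up to constants, in the threshold for $T$.
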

\begin{proof}
First, we note that Lemma \ref{lemma: ew z} implies for any $\M\in\R^{d\times d},$
\begin{align*}
&\sum_{i=1}^n\E[\xi^2_i(\M)\vert\mathcal{F}_{t_{i-1}}]   
\\
&=\sum_{i=1}^n  \1_B \left( X_{t_{i-1}} \right)(\M X_{t_{i-1}})^\top\E[\Delta\bar{Z}_i(\Delta\bar{Z}_i)^\top\vert\mathcal{F}_{t_{i-1}}] \M X_{t_{i-1}}
\\
&\leq \sum_{i=1}^n  \1_B \left( X_{t_{i-1}} \right)(\M X_{t_{i-1}})^\top\E[\Delta\tilde{Z}_i(\Delta\tilde{Z}_i)^\top\vert\mathcal{F}_{t_{i-1}}] \M X_{t_{i-1}}
\\
&\leq \Delta_n \lambda_{\max}(\C+\boldsymbol{\nu}_2)\exp(\Delta_n\Vert\A_0\Vert)\sum_{i=1}^n  \1_B \left( X_{t_{i-1}} \right)(\M X_{t_{i-1}})^\top\M X_{t_{i-1}}
\\
&=T \lambda_{\max}(\C+\boldsymbol{\nu}_2)\exp(\Delta_n\Vert\A_0\Vert)\tr(\M\hat{\C}_n(B)\M)
\\
&\leq T \lambda_{\max}(\C+\boldsymbol{\nu}_2)\exp(\Delta_n\Vert\A_0\Vert)\Vert \M\Vert_2^2\lambda_{\max}(\hat{\C}_n(B)),
\end{align*}
which implies the following set inclusion
\begin{align*}
   &\Big\{\sup_{\M\in\mathcal{S}^{(d\times d)-1}}\Big(\sum_{i=1}^n\xi^2_i(\M)+\sum_{i=1}^n\E[\xi^2_i(\M)\vert\mathcal{F}_{t_{i-1}}]\Big)> 4T\gamma(\Delta_n)\Big\}\cap \mathfrak{C} 
   \\
   &\subseteq\Big\{\sup_{\M\in\mathcal{S}^{(d\times d)-1}}\sum_{i=1}^n(\xi^2_i(\M)-\E[\xi^2_i(\M)\vert\mathcal{F}_{t_{i-1}}])+2\sup_{\M\in\mathcal{S}^{(d\times d)-1}}\sum_{i=1}^n\E[\xi^2_i(\M)\vert\mathcal{F}_{t_{i-1}}]
> 4T\gamma(\Delta_n)\Big\}\cap \mathfrak{C}
   \\
   &\subseteq\Big\{\sup_{\M\in\mathcal{S}^{(d\times d)-1}}\sum_{i=1}^n(\xi^2_i(\M)-\E[\xi^2_i(\M)\vert\mathcal{F}_{t_{i-1}}])> T\lambda_{\max}(\C+\boldsymbol{\nu}_2)\lambda_{\max}(\C_\infty(B))\exp(\Delta_n\Vert\A_0\Vert)\Big\}\cap \mathfrak{C}.
\end{align*}
Thus, in order to prove the first assertion, it is enough to verify that for any $T\geq T_1(\ep)$
\[\P\Big(\sup_{\M\in\mathcal{S}^{(d\times d)-1}}\sum_{i=1}^n(\xi^2_i(\M)-\E[\xi^2_i(\M)\vert\mathcal{F}_{t_{i-1}}])> \gamma(\Delta_n)T,\mathfrak{C}\Big)\leq \ep. \]
For this we let arbitrary $\M_1,\M_2\in\mathcal{S}^{(d\times d)-1}$ be given. Then the following identity 
\begin{align*}
    \xi^2_i(\M_1)-\xi^2_i(\M_2)
    &=((\M_1-\M_2) X_{t_{i-1}})^\top\Delta\bar{Z}_i((\M_1+\M_2) X_{t_{i-1}})^\top\Delta\bar{Z}_i\1_B(X_{t_{i-1}}),
\end{align*}
implies by Lemma \ref{lemma: ou jumps, levy jumps}
\begin{align*}
    \vert \xi^2_i(\M_1)-\E[\xi^2_i(\M_1)\vert\mathcal{F}_{t_{i-1}}]-\xi^2_i(\M_2)+\E[\xi^2_i(\M_2)\vert\mathcal{F}_{t_{i-1}}]\vert
\leq 4b^2\eta_+^2\Vert\M_1-\M_2\Vert_2.
\end{align*}
Additionally, we obtain
\begin{align*}
    &\sum_{i=1}^n\E[( \xi^2_i(\M_1)-\E[\xi^2_i(\M_1)\vert\mathcal{F}_{t_{i-1}}]-\xi^2_i(\M_2)+\E[\xi^2_i(\M_2)\vert\mathcal{F}_{t_{i-1}}])^2\vert \mathcal{F}_{t_{i-1}}]
    \\
    &\leq\sum_{i=1}^n\E[(((\M_1-\M_2) X_{t_{i-1}})^\top\Delta\bar{Z}_i((\M_1+\M_2) X_{t_{i-1}})^\top\Delta\bar{Z}_i)^2\1_B(X_{t_{i-1}})\vert \mathcal{F}_{t_{i-1}}]
    \\
    &\leq 4b^2\eta_+^2\sum_{i=1}^n\E[(((\M_1-\M_2) X_{t_{i-1}})^\top\Delta\bar{Z}_i)^2\1_B(X_{t_{i-1}})\vert \mathcal{F}_{t_{i-1}}]
    \\
    &= 4b^2\eta_+^2\sum_{i=1}^n((\M_1-\M_2) X_{t_{i-1}})^\top\E[\Delta\bar{Z}_i(\Delta\bar{Z}_i)^\top\vert \mathcal{F}_{t_{i-1}}](\M_1-\M_2) X_{t_{i-1}}\1_B(X_{t_{i-1}})
    \\
    &\leq 4b^2\eta_+^2\sum_{i=1}^n((\M_1-\M_2) X_{t_{i-1}})^\top\E[\Delta\tilde{Z}_i(\Delta\tilde{Z}_i)^\top\vert \mathcal{F}_{t_{i-1}}](\M_1-\M_2) X_{t_{i-1}}\1_B(X_{t_{i-1}})
    \\
    &\leq 4b^2\eta_+^2\Delta_n \lambda_{\max}(\C+\boldsymbol{\nu}_2)\exp(\Delta_n\Vert\A_0\Vert)\sum_{i=1}^n((\M_1-\M_2) X_{t_{i-1}})^\top(\M_1-\M_2) X_{t_{i-1}}\1_B(X_{t_{i-1}})
       \\
    &= 4b^2\eta_+^2T \lambda_{\max}(\C+\boldsymbol{\nu}_2)\exp(\Delta_n\Vert\A_0\Vert)\tr((\M_1-\M_2)\hat{\C}_n(B)(\M_1-\M_2)^\top )
         \\
    &\leq  4b^2\eta_+^2T \lambda_{\max}(\C+\boldsymbol{\nu}_2)\exp(\Delta_n\Vert\A_0\Vert)\Vert \M_1-\M_2\Vert_2^2\lambda_{\max}(\hat{\C}_n(B) ).
\end{align*}
This yields the following set inclusion
\begin{align*}
    \mathfrak{C}&=\Big\{\Vert\hat{\C}_n(B)- \C_\infty(B)\Vert\leq \frac{\lambda_{\min}(\C_\infty(B))}{2} \Big\}
    \\
    &\subseteq \Big\{\forall \M_1,\M_2\in\mathcal{S}^{(d\times d)-1}: \sum_{i=1}^n\E[( \xi^2_i(\M_1)-\E[\xi^2_i(\M_1)\vert\mathcal{F}_{t_{i-1}}]-\xi^2_i(\M_2)+\E[\xi^2_i(\M_2)\vert\mathcal{F}_{t_{i-1}}])^2\vert \mathcal{F}_{t_{i-1}}]
    \\
    &\quad \leq 6b^2\eta_+^2T \gamma(\Delta_n)\Vert \M_1-\M_2\Vert_2^2\Big\}
\end{align*}
Hence, by Freedman's inequality
\begin{align*}
&\P\Big(\vert\sum_{i=1}^n \xi^2_i(\M_1)-\E[\xi^2_i(\M_1)\vert\mathcal{F}_{t_{i-1}}]-\xi^2_i(\M_2)+\E[\xi^2_i(\M_2)\vert\mathcal{F}_{t_{i-1}}]\vert\1_{\mathfrak{C}}>u\Big)
\\
&=\P\Big(\vert\sum_{i=1}^n \xi^2_i(\M_1)-\E[\xi^2_i(\M_1)\vert\mathcal{F}_{t_{i-1}}]-\xi^2_i(\M_2)+\E[\xi^2_i(\M_2)\vert\mathcal{F}_{t_{i-1}}]\vert>u,\mathfrak{C}\Big)
\\
&\leq 2\exp\Bigg(-\frac{u^2}{b^2\eta_+^2\Vert\M_1-\M_2\Vert_2(12\gamma(\Delta_n)T \Vert \M_1-\M_2\Vert_2+\frac{8}{3}u)}\Bigg)
\\
&\leq 2\exp\Bigg(-\frac{u^2}{2b^2\eta_+^2\Vert\M_1-\M_2\Vert_2(12\gamma(\Delta_n)T \Vert \M_1-\M_2\Vert_2\lor\frac{8}{3}u)}\Bigg)
\\
&= 2\exp\Bigg(-\frac{1}{b^2\eta_+^2\Vert\M_1-\M_2\Vert_2}\Big(\frac{u^2}{24\gamma(\Delta_n)T\Vert \M_1-\M_2\Vert_2}\land\frac{3u}{16}\Big)\Bigg).
\end{align*} 
For any $u>0,$ it thus holds
\begin{align*}
&\P\Big(\vert\sum_{i=1}^n \xi^2_i(\M_1)-\E[\xi^2_i(\M_1)\vert\mathcal{F}_{t_{i-1}}]-\xi^2_i(\M_2)+\E[\xi^2_i(\M_2)\vert\mathcal{F}_{t_{i-1}}]\vert\1_{\mathfrak{C}}>
\\&\hspace{10em}\sqrt{24\gamma(\Delta_n)}b\eta_+\Vert \M_1-\M_2\Vert_2\sqrt{Tu}+\frac{16}{3}b^2\eta_+^2\Vert\M_1-\M_2\Vert_2u\Big)
\leq 2\exp(-u),
\end{align*} 
which implies that $(\sum_{i=1}^n(\xi^2_i(\M)-\E[\xi^2_i(\M)\vert\mathcal{F}_{t_{i-1}}])\1_{\mathfrak{C}})_{\M\in\mathcal{S}^{(d\times d)-1}} $ has a mixed tail in the sense of definition (12) of \cite{dirk13}.
Thus, Theorem 3.5 of \cite{dirk13} implies that there exists some universal constant $c_3>0,$ such that
\begin{align}
\begin{split}\label{eq: cv chain 1}
     &\P\Bigg(\sup_{\M\in\mathcal{S}^{(d\times d)-1}}\sum_{i=1}^n(\xi^2_i(\M)-\E[\xi^2_i(\M)\vert\mathcal{F}_{t_{i-1}}])\1_{\mathfrak{C}}>c_3\Big(\gamma_2(\mathcal{S}^{(d\times d)-1},\sqrt{24\gamma(\Delta_n)T}b\eta_+\Vert\cdot\Vert_2)
    \\&\hspace{13em}+\gamma_1(\mathcal{S}^{(d\times d)-1},\frac{8}{3}b^2\eta_+^2\Vert\cdot\Vert_2)+2\sqrt{24u\gamma(\Delta_n)T}b\eta_++\frac{16}{3}ub^2\eta_+^2\Big)\Bigg)
    \\
    &\leq \e^{-u},
\end{split}
\end{align}
where $\gamma_1$ and $\gamma_2$ denote Talagrand's $\gamma_\alpha$ functionals (see Section 2 in \cite{dirk13}). To bound the $\gamma_2$ functional we employ Talagrand's majorizing measure Theorem (see Theorem 8.6.1 in \cite{vers18}). For this let $\mathsf{N}$ be a $d^2$-dimensional standard normal random vector.  Then it holds for some absolute constant $c_4>0,$
\begin{align}
\begin{split}\label{eq: cv chain 2}
     &\gamma_2(\mathcal{S}^{(d\times d)-1},\sqrt{24\gamma(\Delta_n)T}b\eta_+\Vert\cdot\Vert_2)  
  \\
  &\leq c_4b\eta_+\sqrt{\gamma(\Delta_n)T}\E\Big[\sup_{u\in\mathcal{S}^{d^2-1}}u^\top\mathsf{N}\Big]
  \\
  &\leq c_4b\eta_+\sqrt{\gamma(\Delta_n)T}d, 
\end{split}
\end{align}
where we used Example 7.5.7 in \cite{vers18} in the last step. In order to bound the $\gamma_1$ functional we make use of the fact (see e.g. equation (4) in \cite{dirk13}) that for some universal constant $c_5>0$
\[\gamma_1(\mathcal{S}^{(d\times d)-1},\frac{16}{3}b^2\eta_+^2\Vert\cdot\Vert_2)\leq c_5\int_0^\infty\log(\mathcal{N}(\mathcal{S}^{(d\times d)-1},\frac{16}{3}b^2\eta_+^2\Vert\cdot\Vert_2,u)\d u,\]
where for $\ep>0$ and $(T,d)$ being a semi-metric space $\mathcal{N}(T,d,\ep)$ denotes the covering number of $T$, i.e. the smallest number of balls of $d$-radius $\ep$ required to cover $T$. Hence, we obtain
\begin{align}
\begin{split}\label{eq: cv chain 3}
      &\gamma_1(\mathcal{S}^{(d\times d)-1},\frac{16}{3}b^2\eta_+^2\Vert\cdot\Vert_2)
  \\
  &\leq c_5\int_0^\infty\log(\mathcal{N}(\mathcal{S}^{(d\times d)-1},\frac{16}{3}b^2\eta_+^2\Vert\cdot\Vert_2,u)\d u  
  \\
  &=c_5\int_0^\infty\log(\mathcal{N}(\mathcal{S}^{(d\times d)-1},\Vert\cdot\Vert_2,\frac{3}{16b^2\eta_+^2}u)\d u
  \\
  &=c_5\frac{16}{3}b^2\eta_+^2\int_0^\infty\log(\mathcal{N}(\mathcal{S}^{(d\times d)-1},\Vert\cdot\Vert_2,u)\d u
   \\
  &\leq c_5\frac{16}{3}b^2\eta_+^2d^2\int_0^2\log\Big(1+\frac{2}{u}\Big)\d u
  \\
  &=c_5\frac{64}{3}\log(2)b^2\eta_+^2d^2,
\end{split}
\end{align}
where we used Corollary 4.2.13 in \cite{vers18} in the second to last step, together with the fact that $\Vert \M_1-\M_2\Vert_2\leq 2$ for any $\M_1,\M_2\in \mathcal{S}^{(d\times d)-1}$. Thus, combining \eqref{eq: cv chain 1}, \eqref{eq: cv chain 2} and \eqref{eq: cv chain 3} gives that for some universal constant $c_6>0$ it holds for any $\ep\in(0,1)$
\begin{align*}
    &\P\Bigg(\sup_{\M\in\mathcal{S}^{(d\times d)-1}}\sum_{i=1}^n(\xi^2_i(\M)-\E[\xi^2_i(\M)\vert\mathcal{F}_{t_{i-1}}])>
\\
&\hspace{4em}c_6\Big(b\eta_+\sqrt{\gamma(\Delta_n)T}\Big(d+\sqrt{\log(\ep^{-1})}\Big)+b^2\eta_+^2\Big(d^2+\log(\ep^{-1})\Big)\Big),\mathfrak{C}\Bigg)
\\
    &=\P\Bigg(\sup_{\M\in\mathcal{S}^{(d\times d)-1}}\sum_{i=1}^n(\xi^2_i(\M)-\E[\xi^2_i(\M)\vert\mathcal{F}_{t_{i-1}}])\1_{\mathfrak{C}}>
    \\
&\hspace{4em}c_6\Big(b\eta_+\sqrt{\gamma(\Delta_n)T}\Big(d+\sqrt{\log(\ep^{-1})}\Big)+b^2\eta_+^2\Big(d^2+\log(\ep^{-1})\Big)\Big)\Bigg)
    \\
&\leq \ep.
\end{align*}
Noting that $T\geq c_{1,\star}\gamma^{-1}(\Delta_n) b^2\eta_+^2(d+\sqrt{\log(\ep^{-1})})^2,$ where $c_{1,\star}=(2c_6\lor 4c_6^2),$ implies
\begin{align*}
&c_6\Big(b\eta_+\sqrt{\gamma(\Delta_n)T}\Big(d+\sqrt{\log(\ep^{-1})}\Big)+b^2\eta_+^2\Big(d^2+\log(\ep^{-1})\Big)\Big)
\\
&=\gamma(\Delta_n)Tc_6\Big(b\eta_+(\gamma(\Delta_n)T)^{-1/2}\Big(d+\sqrt{\log(\ep^{-1})}\Big)+(\gamma(\Delta_n)T)^{-1}b^2\eta_+^2\Big(d^2+\log(\ep^{-1})\Big)\Big)
\\
&\leq \gamma(\Delta_n)T,
\end{align*}
concludes the proof by applying Proposition \ref{prop:rep disc trunc}. 
\end{proof}
The results of \cite{vanzanten01} now imply that $\tilde{\mathcal{S}}$ has subgaussian increments with respect to the Frobenius norm on the event $Q_T$. We can therefore apply Talagrand's majorizing measure theorem to obtain uniform bounds for $\tilde{\mathcal{S}}$ depending on the Gaussian width of its indexing set (see Exercise 8.6.5 in \cite{vers18}). We recall that the Gaussian width of a set $\mathcal{D}\subset\R^{d\times d}$ is defined as
\[w(\mathcal{D})\coloneq \E\Big[\sup_{\M\in\mathcal{D}}\vec(\M)^\top Z\Big],\quad \mathrm{where} \quad Z\sim \mathcal{N}(0,\mathbb{I}_{d^2}). \]
\begin{theorem}\label{thm:st}
Let $\mathcal{D}\subset\R^{d\times d}$ be given.  Then there exists a universal constant $c_{2,\star}>0,$ such that for any $\ep\in(0,1),$ it holds
\[\P\Big(\sup_{\M\in\mathcal{D}}\vert\tilde{\mathcal{S}}(\M)\vert\1_{Q_T}> c_{2,\star} \sqrt{\gamma(\delta_n)T}\big(w(\mathcal{D})+\sqrt{\log(2/\ep)}\operatorname{rad}(\mathcal{D})\big)\Big)\leq \ep. \]
\end{theorem}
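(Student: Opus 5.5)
The plan is to show that, on the event $Q_T$, the process $(\tilde{\mathcal{S}}(\M))_{\M\in\mathcal{D}}$ has sub-Gaussian increments with respect to $\sqrt{\gamma(\Delta_n)T}\,\Vert\cdot\Vert_2$. Once this holds, the tail version of Talagrand's majorizing measure theorem (Exercise 8.6.5 in \cite{vers18}) gives the claim directly.

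\textbf{Martingale structure and linearity.} For every $\M$, $\tilde{\mathcal{S}}(\M)=\sum_{i=1}^n\xi_i(\M)$ is the terminal value of a discrete-time martingale with respect to $(\mathcal{F}_{t_i})_i$. Indeed, $\E[\Delta\bar Z_i\mid\mathcal{F}_{t_{i-1}}]=0$ by construction, and $\M X_{t_{i-1}}\1_B(X_{t_{i-1}})$ is $\mathcal{F}_{t_{i-1}}$-measurable. The map $\M\mapsto\xi_i(\M)$ is linear, so
\[\tilde{\mathcal{S}}(\M_1)-\tilde{\mathcal{S}}(\M_2)=\Vert\M_1-\M_2\Vert_2\,\tilde{\mathcal{S}}(\mathbf{N}),\qquad \mathbf{N}\coloneq(\M_1-\M_2)/\Vert\M_1-\M_2\Vert_2\in\mathcal{S}^{(d\times d)-1}.\]
On $Q_T$, the martingale $\sum_{i\le k}\xi_i(\mathbf{N})$, $k\le n$, has the following property: its quadratic variation plus its predictable quadratic variation is bounded by $4\gamma(\Delta_n)T$, uniformly in $\mathbf N$ and hence also at every intermediate time $k$.

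\textbf{Increment tail bound.} Next I would apply the exponential inequality of \cite{vanzanten01} for locally square-integrable martingales $M$ (for discrete martingales it follows from the supermartingale $\exp(\lambda M_k-\tfrac{\lambda^2}{2}([M]_k+\langle M\rangle_k))$). It gives
\[\P\big(\vert M_n\vert\ge x,\ [M]_n+\langle M\rangle_n\le L\big)\le 2\exp(-x^2/(2L)).\]
This inequality does not require bounded jumps, which is why the sum of $[M]$ and $\langle M\rangle$ appears in $Q_T$. With $L=4\gamma(\Delta_n)T$ it yields
\[\P\big(\vert\tilde{\mathcal{S}}(\M_1)-\tilde{\mathcal{S}}(\M_2)\vert\1_{Q_T}\ge x\big)\le 2\exp\Big(-\frac{x^2}{8\gamma(\Delta_n)T\Vert\M_1-\M_2\Vert_2^2}\Big).\]
Hence the process $Y_\M\coloneq\tilde{\mathcal{S}}(\M)\1_{Q_T}$ has sub-Gaussian increments with constant of order $\sqrt{\gamma(\Delta_n)T}$ in the Frobenius metric. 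One subtlety is that $\1_{Q_T}$ multiplies both terms: we have $(Y_{\M_1}-Y_{\M_2})=(\tilde{\mathcal{S}}(\M_1)-\tilde{\mathcal{S}}(\M_2))\1_{Q_T}$, so the bound above transfers to $Y$ directly, and linearity keeps the increment structure intact.

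\textbf{Chaining and assembly.} Applying Exercise 8.6.5 of \cite{vers18} (tail form of Talagrand's comparison) to $(Y_\M)_{\M\in\mathcal D}$ gives, with probability at least $1-2\e^{-u^2}$,
\[\sup_{\M,\M'\in\mathcal{D}}\vert Y_\M-Y_{\M'}\vert\le C\sqrt{\gamma(\Delta_n)T}\,\big(w(\mathcal{D})+u\operatorname{rad}(\mathcal{D})\big).\]
To pass from increments to $\sup\vert Y_\M\vert$, I would use either of two routes. The first adjoins $\mathbf 0$ to $\mathcal D$, noting $Y_{\mathbf 0}=0$; this changes the Gaussian width and the radius by at most constants, since $w(\mathcal D\cup\{0\})\le w(\mathcal D)+\E\vert\sup\cdot\vert$ is controlled. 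The second picks any $\M_0\in\mathcal D$ and bounds $\vert Y_{\M_0}\vert$ with the single-point sub-Gaussian tail, at scale $\sqrt{\gamma(\Delta_n)T}\Vert\M_0\Vert_2\sqrt{\log(2/\ep)}\le\sqrt{\gamma(\Delta_n)T}\operatorname{rad}(\mathcal D)\sqrt{\log(2/\ep)}$. Setting $u=\sqrt{\log(4/\ep)}$, splitting $\ep$ between the two bounds, and absorbing constants into $c_{2,\star}$ gives the claim.

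The main obstacle is the second step. It requires the martingale exponential inequality to hold with a uniform random bound on the variation, restricted to the event $Q_T$, without assuming bounded increments of $\xi_i$. I would rely on the van Zanten form rather than Freedman's inequality. The remaining care is in checking that $Q_T$ controls the variation at the final time, which suffices for that inequality.
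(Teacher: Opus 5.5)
Your proposal is correct and follows essentially the same route as the paper. The paper applies Corollary 3.4 of \cite{vanzanten01} on $Q_T$, using linearity to rescale to the unit Frobenius sphere, to get the increment tail bound $2\exp\bigl(-u^2/(8\gamma(\Delta_n)T\Vert\M_1-\M_2\Vert_2^2)\bigr)$ for $\tilde{\mathcal{S}}(\M)\1_{Q_T}$, and then concludes with the tail form of Talagrand's comparison inequality (Exercise 8.6.5 in \cite{vers18}). Your explicit passage from increments to $\sup_{\M}\vert\tilde{\mathcal{S}}(\M)\vert\1_{Q_T}$ (via $\tilde{\mathcal{S}}(\mathbf{0})=0$ or an anchor point) and your supermartingale derivation of the exponential inequality are details the paper leaves implicit in its citations.
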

\begin{proof}
    Let $\M_1,\M_2\in\mathcal{D}$ be given and denote $\M\coloneq \M_1-\M_2$. Then Corollary 3.4 in \cite{vanzanten01} gives for any $u>0$
    \begin{align*}
        &\P\Big(\vert\tilde{\mathcal{S}}(\M_1)\1_{Q_T}-\tilde{\mathcal{S}}(\M_2)\1_{Q_T}\vert>u\Big)
        \\
        &=\P\Big(\vert\tilde{\mathcal{S}}(\M)\vert>u,Q_T\Big)
        \\
        &\leq \P\Big(\vert\tilde{\mathcal{S}}(\M)\vert>u,\sum_{i=1}^n\xi^2_i(\M)+\sum_{i=1}^n\E[\xi^2_i(\M)\vert\mathcal{F}_{t_{i-1}}]\leq  4\gamma(\Delta_n)T\Vert\M\Vert_2^2\Big)
        \\
        &\leq 2\exp\Big(-\frac{u^2}{8\gamma(\Delta_n)T\Vert\M\Vert_2^2}\Big).
    \end{align*}
    Proposition 2.5.2 in \cite{vers18} then implies that $(\tilde{\mathcal{S}}(\M)\1_{Q_T}/(c_7 \sqrt{\gamma(\Delta_n)T}))_{\M\in\mathcal{D}}$ is sub-Gaussian in the sense of Exercise 8.6.4 in \cite{vers18}, where $c_7>0$ is a universal constant. The assertion then follows from Exercise 8.6.5 in \cite{vers18}. 
\end{proof}
By the previous theorem, it now suffices to bound the Gaussian width and radius of a suitable set in order to obtain a uniform bound of the stochastic error. 
\begin{corollary}\label{cor:fin}
 Let $\ep_1,\ep_2\in(0,1)$ be given and set \[\mathcal{D}^*(\ep_1)\coloneq \{\M\in\R^{d\times d}\colon (\Vert \M\Vert_\star\lor \log(2/\ep_1)^{1/2}\Vert\M\Vert_2)=1\}.\]  
 Then there exists a universal constant $c_\star>0,$ such that for any $T\geq T_\star(\ep_2,\eta),$
 \[\P\Big(\sup_{\M\in\mathcal{D}^*(\ep_1)}\vert\tilde{\mathcal{S}}(\M)\vert\leq c_{\star} \sqrt{\gamma(\delta_n)T},Q_T\Big)\geq 1-\ep_1-\ep_2. \]
\end{corollary}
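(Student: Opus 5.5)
The plan is to apply Theorem \ref{thm:st} with $\mathcal{D}=\mathcal{D}^*(\ep_1)$. Then we control the Gaussian width and the radius of this set, and combine the result with Proposition \ref{prop: qt} to account for the event $Q_T$.

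First, the radius. Every $\M\in\mathcal{D}^*(\ep_1)$ satisfies $\log(2/\ep_1)^{1/2}\Vert\M\Vert_2\leq 1$. Hence $\operatorname{rad}(\mathcal{D}^*(\ep_1))\leq \log(2/\ep_1)^{-1/2}$, and therefore $\sqrt{\log(2/\ep_1)}\operatorname{rad}(\mathcal{D}^*(\ep_1))\leq 1$. This is exactly why the second constraint appears in the definition of $\mathcal{D}^*(\ep_1)$: it absorbs the confidence term of Theorem \ref{thm:st}.

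Second, the Gaussian width. Since $\mathcal{D}^*(\ep_1)\subseteq\{\M\colon\Vert\M\Vert_\star\leq 1\}$, it suffices to bound
\[\E\Big[\sup_{\Vert\M\Vert_\star\leq1}\vec(\M)^\top Z\Big]=\E[\Vert Z\Vert_\star^{\circ}],\]
the expected dual norm of a standard Gaussian vector in $\R^{d^2}$. The dual of the sorted-$\ell_1$ norm with weights $\omega_j=\sqrt{\log(2d^2/j)}$ is
\[\max_{k\leq d^2}\frac{\sum_{j\leq k}Z_j^\#}{\sum_{j\leq k}\omega_j}.\]
The standard fact used for the Slope estimator (Proposition E.2 in \cite{belets18}, or the analogous lemma in \cite{dex24}) gives $\E[\max_k \sum_{j\le k}Z_j^{\#}/\sum_{j\le k}\omega_j]\leq c$ for a universal constant $c$. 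The reason is that the top-$k$ order statistics of $N$ Gaussians sum to at most of order $k\sqrt{\log(2N/k)}\asymp\sum_{j\leq k}\omega_j$, and a union over $k$ with a peeling argument keeps the constant universal. Alternatively, one can bound $\P(Z_j^\#>t\,\omega_j)$ and sum. Hence $w(\mathcal{D}^*(\ep_1))\leq c$, with $c$ independent of $d$.

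Third, we combine. By Theorem \ref{thm:st}, with probability at least $1-\ep_1$,
\[\sup_{\M\in\mathcal{D}^*(\ep_1)}\vert\tilde{\mathcal{S}}(\M)\vert\1_{Q_T}\leq c_{2,\star}(c+1)\sqrt{\gamma(\Delta_n)T}.\]
For $T\geq T_\star(\ep_2,\eta)$, Proposition \ref{prop: qt} gives $\P(Q_T^{\mathsf c})\leq\ep_2$. The intersection of the two events has probability at least $1-\ep_1-\ep_2$, and on $Q_T$ the indicator equals one. This yields the claim with $c_\star\coloneq c_{2,\star}(c+1)$.

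The main obstacle is the universal bound on the Gaussian width of the $\Vert\cdot\Vert_\star$ unit ball. It must not depend on $d$, so I would rely on (or reprove) the Bellec--Lecué--Tsybakov estimate for the dual of the Slope norm.
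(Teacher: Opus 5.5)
Your proposal is correct and follows the paper's proof step for step. You apply Theorem \ref{thm:st} to $\mathcal{D}^*(\ep_1)$ and bound $\sqrt{\log(2/\ep_1)}\operatorname{rad}(\mathcal{D}^*(\ep_1))$ by a constant; the paper gets $2$ from $\Vert\M_1-\M_2\Vert_2\le 2\log(2/\ep_1)^{-1/2}$, you get $1$. You then bound the Gaussian width by a universal constant through the $\Vert\cdot\Vert_\star$ unit ball; the paper cites the argument of Proposition 3.3 in \cite{dex24}, giving $\sqrt{\pi/\log(4)}+4$, and your dual sorted-$\ell_1$ norm with the Bellec--Lecu\'e--Tsybakov estimate is the same ingredient. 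Finally, like the paper, you combine this with $\P(Q_T)\ge 1-\ep_2$ from Proposition \ref{prop: qt} via a union bound.
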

\begin{proof}
    In order to apply Theorem \ref{thm:st} we first show that the Gaussian width $w(\mathcal{D}^*(\ep_1)),$ and $\sqrt{\log(2/\ep_1)}\operatorname{rad}(\mathcal{D}^*(\ep_1))$ are upper bounded by universal constants, independent of $d$. By the triangle inequality it holds for any $\M_1,\M_2\in\mathcal{D}^*(\ep_1),$
    \begin{align*}
        \Vert\M_1-\M_2\Vert_2&\leq \Vert \M_1\Vert_2+\Vert\M_2\Vert_2
        \\
        &\leq 2\log(2/\ep_1)^{-1/2},
    \end{align*}
    which implies
    \begin{align*}
        \sqrt{\log(2/\ep_1)}\operatorname{rad}(\mathcal{D}^*(\ep_1))\leq 2.
    \end{align*}
    For the Gaussian width, we can argue as in the proof of Proposition 3.3 in \cite{dex24} to obtain
    \begin{align*}
        w(\mathcal{D}^*(\ep_1))\leq \sqrt{\frac{\pi}{\log(4)}}
+4.    \end{align*}
Thus, Proposition \ref{prop: qt} and Theorem \ref{thm:st} imply that there exists a universal constant $c_\star>0$ such that for any $T\geq T_\star(\ep_2)$ it holds
\begin{align*}
  &\P\Big(\sup_{\M\in\mathcal{D}^*(\ep_1)}\vert\tilde{\mathcal{S}}(\M)\vert\leq c_{\star} \sqrt{\gamma(\delta_n)T},Q_T\Big)
  \\
  &=\P(Q_T)-\P\Big(\sup_{\M\in\mathcal{D}^*(\ep_1)}\vert\tilde{\mathcal{S}}(\M)\vert>c_{\star} \sqrt{\gamma(\delta_n)T},Q_T\Big)
  \\
  &\geq 1-\ep_2-\P\Big(\sup_{\M\in\mathcal{D}^*(\ep_1)}\vert\tilde{\mathcal{S}}(\M)\vert\1_{Q_T}>c_{\star} \sqrt{\gamma(\delta_n)T}\Big)
  \\
  &\geq 1-\ep_1-\ep_2,
\end{align*}
which concludes the proof.
\end{proof}
Now, it only remains to bound the truncation bias in Lemma \ref{lemma: st decomp} to prove our main results. This is the content of the next section.
\subsubsection{Truncation Bias}
In order to bound the truncation bias we have to bound the probability of the increments of the BDLP $\Z$ being larger than the truncation level $\eta$. Since we assume the BDLP $\Z$ to be a martingale, we can apply Theorem 3.3 in \cite{vanzanten01}, which gives the following result.
\begin{lemma}\label{lemma:prob bound}
For any $a>0,u>0,L\geq2\Delta_n\exp(2\Delta_n\Vert\A_0\Vert)\lambda_{\max}(\C+\boldsymbol{\nu}_2)$ it holds
\[\P(\Vert \Delta  \tilde{Z}_1\Vert\geq u)\leq 2d\exp\Big(-\Big(\frac{u^2}{4dL}\land \frac{u}{\frac
        {4}{3}a\sqrt{d}}\Big)\Big)+2\Delta_nL^{-1}\exp(2\Delta_n\Vert\A_0\Vert)\int_{B(0,\exp(-\Delta_n\Vert\A_0\Vert )a)^{\mathsf{c}}}\Vert z\Vert^2\nu(\md z).\]
\end{lemma}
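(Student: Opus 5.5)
Since $\Delta\tilde Z_1=\int_0^{\Delta_n}\e^{-(\Delta_n-u)\A_0}\d Z_u$, I view it as the terminal value of the martingale $M_t\coloneq\int_0^{t}\e^{-(\Delta_n-u)\A_0}\d Z_u$, $t\in[0,\Delta_n]$. The plan is to split its jumps at level $a$, reduce the norm bound to coordinates, and apply the exponential inequality of Theorem 3.3 in \cite{vanzanten01} to each coordinate.

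\textbf{Reduction to coordinates.} Since $\Vert x\Vert\geq u$ forces $\vert x_k\vert\geq u/\sqrt{d}$ for some $k\in[d]$, a union bound gives
\[\P(\Vert\Delta\tilde Z_1\Vert\geq u)\leq\sum_{k=1}^d\P\big(\vert M^{(k)}_{\Delta_n}\vert\geq u/\sqrt d\big),\]
which produces the prefactor $d$ (twice that from the two-sided bound). Each coordinate $M^{(k)}$ is a scalar square-integrable martingale. Its jumps are $e_k^\top\e^{-(\Delta_n-s)\A_0}\Delta Z_s$, bounded in modulus by $\exp(\Delta_n\Vert\A_0\Vert)\Vert\Delta Z_s\Vert$.

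\textbf{Applying the martingale inequality.} Theorem 3.3 in \cite{vanzanten01} bounds, for a martingale with jumps bounded by $K$,
\[\P\big(\vert M_t\vert\geq x,\ \langle M\rangle_t\leq L\big)\leq 2\exp\Big(-\frac{x^2}{2(L+Kx/3)}\Big),\]
plus a correction accounting for jumps exceeding $K$. I choose the jump level so that $K=a$ in the coordinate scale: jumps of $Z$ with $\Vert z\Vert\geq\exp(-\Delta_n\Vert\A_0\Vert)a$ are treated as large, and all others yield jumps of $M^{(k)}$ of size at most $a$.

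\textbf{Bounding the predictable quadratic variation.} For the small-jump part, $\langle M^{(k)}\rangle_{\Delta_n}$ is deterministic:
\[\int_0^{\Delta_n}e_k^\top\e^{-(\Delta_n-s)\A_0}(\C+\boldsymbol{\nu}_2)\e^{-(\Delta_n-s)\A_0^\top}e_k\,\d s\leq\Delta_n\exp(2\Delta_n\Vert\A_0\Vert)\lambda_{\max}(\C+\boldsymbol{\nu}_2)\leq L/2,\]
by the assumption on $L$, as in Lemma \ref{lemma: ew z}. Plugging in $x=u/\sqrt d$ gives
\[\frac{x^2}{2(L+ax/3)}\geq\frac{u^2}{4dL}\land\frac{u}{\frac43 a\sqrt d},\]
using $2(L+ax/3)\leq 4\max(L,ax/3)$. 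This is exactly the first term of the claimed bound.

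\textbf{The large-jump part: expected main obstacle.} The delicate step is matching the precise form of the second term. Either the large-jump component is removed and its contribution to the event bounded via Chebyshev, or the "$\langle M\rangle\leq L$" event of van Zanten is used with the quadratic variation including large jumps. I would take the second route. The contribution of large jumps to $\sum_s\vert\Delta M^{(k)}_s\vert^2$, summed over $k$, is at most
\[\exp(2\Delta_n\Vert\A_0\Vert)\sum_{s\leq\Delta_n}\Vert\Delta Z_s\Vert^2\1\{\Vert\Delta Z_s\Vert\geq \e^{-\Delta_n\Vert\A_0\Vert}a\}.\]
Its expectation is $\Delta_n\exp(2\Delta_n\Vert\A_0\Vert)\int_{B(0,\e^{-\Delta_n\Vert\A_0\Vert}a)^{\mathsf c}}\Vert z\Vert^2\nu(\md z)$. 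Markov's inequality at level $L/2$ then bounds the probability that this exceeds $L/2$ by the stated second term, including the factor $2\Delta_nL^{-1}$. On the complementary event, the total (predictable plus jump) variance is at most $L$, and the coordinatewise argument above applies. Care is needed to use the version of van Zanten's inequality allowing a combined bound on $\langle M\rangle+[M]$-type quantities.
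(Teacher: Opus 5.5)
Your proposal is correct and follows essentially the same route as the paper. Both arguments use a coordinatewise union bound, then apply Theorem 3.3 of \cite{vanzanten01} to each coordinate on the event that the combined variance quantity is at most $L$. That quantity is the deterministic predictable variation, bounded by $\Delta_n\exp(2\Delta_n\Vert\A_0\Vert)\lambda_{\max}(\C+\boldsymbol{\nu}_2)\leq L/2$, plus the random sum of squared jumps of $\Z$ with $\Vert z\Vert\geq\exp(-\Delta_n\Vert\A_0\Vert)a$, and Markov's inequality handles the latter. The only cosmetic difference is that the paper applies Markov at level $L-\Delta_n\exp(2\Delta_n\Vert\A_0\Vert)\lambda_{\max}(\C+\boldsymbol{\nu}_2)\geq L/2$ instead of directly at $L/2$, which yields the identical bound.
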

\begin{proof}
The proof of the assertion relies on Theorem 3.3 in \cite{vanzanten01}. For this we first note that for any $i\in[d]$ it holds for any $a>0$ 
\begin{align*}
    &\int_0^{\Delta_n}\int_{\R^d}(\exp(-(\Delta_n-s)\A_0 )z)_i^2\1(\vert(\exp(-(\Delta_n-s)\A_0 )z)_i\vert>a)N(\md s,\md z)
        \\&\quad+\int_0^{\Delta_n}\Big(\int_{\R^d}(\exp(-(\Delta_n-s)\A_0 )z)_i^2\nu(\md z)+(\exp(-(\Delta_n-s)\A_0 )\C\exp(-(\Delta_n-s)\A_0^\top ))_{ii}\d s
        \\
        &\leq \int_0^{\Delta_n}\int_{\R^d}\Vert\exp(-(\Delta_n-s)\A_0 )z\Vert^2\1(\Vert\exp(-(\Delta_n-s)\A_0 )z\Vert>a)N(\md s,\md z)
         \\&\quad+\lambda_{\max}\Bigg(\int_0^{\Delta_n}\Big(\int_{\R^d}(\exp(-(\Delta_n-s)\A_0 )z)(\exp(-(\Delta_n-s)\A_0 )z)^\top\nu(\md z)
         \\
         &\quad+(\exp(-(\Delta_n-s)\A_0 )\C\exp(-(\Delta_n-s)\A_0^\top ))_{ii}\d s\Bigg)
         \\
         &\leq \exp(2\Delta_n\Vert\A_0\Vert)\int_0^{\Delta_n}\int_{\R^d}\Vert z\Vert^2\1(\Vert z\Vert>\exp(-\Delta_n\Vert\A_0\Vert )a)N(\md s,\md z)
         \\&\quad+\Delta_n\exp(2\Delta_n\Vert\A_0\Vert)\lambda_{\max}(\C+\boldsymbol{\nu}_2)
         \\
         &\eqcolon Q(\Delta_n,a).
\end{align*}
Theorem 3.3 in \cite{vanzanten01} combined with equation (1.2) in \cite{vanzanten01} thus gives for any $a,u,L>0$
    \begin{align*}
        &\P(\Vert \Delta  \tilde{Z}_1\Vert\geq u)
        \\
        &\leq \P(\sqrt{d}\Vert \Delta  \tilde{Z}_1\Vert_\infty\geq u ,Q(\Delta_n,a)\leq L)+\P(Q(\Delta_n,a)>L)
        \\
        &\leq \P(Q(\Delta_n,a)>L)+\sum_{i=1}^d\P(\vert(\Delta  \tilde{Z}_1)_i\vert\geq u/\sqrt{d} ,Q(\Delta_n,a)\leq L)
        \\
        &\leq \P(Q(\Delta_n,a)>L)+2d\exp\Big(-\frac{u^2}{2dL}\frac{1}{1+\frac{au}{3\sqrt{d}L}}\big)\Big)
         \\
        &=\P(Q(\Delta_n,a)>L)+2d\exp\Big(-\frac{u^2}{2dL+\frac
        {2}{3}au\sqrt{d}}\Big)
        \\
        &\leq \P(Q(\Delta_n,a)>L)+2d\exp\Big(-\Big(\frac{u^2}{4dL}\land \frac{u}{\frac
        {4}{3}a\sqrt{d}}\Big)\Big).
    \end{align*}
    Then setting 
    \[\tilde{L}\coloneq L-\Delta_n\exp(2\Delta_n\Vert\A_0\Vert)\lambda_{\max}(\C+\boldsymbol{\nu}_2),\quad L>0,\]
   we obtain 
    \begin{align*}
        \P(Q(\Delta_n,a)>L)&=\P\Big(\exp(2\Delta_n\Vert\A_0\Vert)\int_0^{\Delta_n}\int_{B(0,\exp(-\Delta_n\Vert\A_0\Vert )a)^{\mathsf{c}}}\Vert z\Vert^2N(\md s,\md z)>\tilde{L}\Big)
        \\
        &\leq \Delta_n\tilde{L}^{-1}\exp(2\Delta_n\Vert\A_0\Vert)\int_{B(0,\exp(-\Delta_n\Vert\A_0\Vert )a)^{\mathsf{c}}}\Vert z\Vert^2\nu(\md z)
                \\
        &\leq 2\Delta_nL^{-1}\exp(2\Delta_n\Vert\A_0\Vert)\int_{B(0,\exp(-\Delta_n\Vert\A_0\Vert )a)^{\mathsf{c}}}\Vert z\Vert^2\nu(\md z).
    \end{align*}
    This concludes the proof.
\end{proof}
The previous result now allows us to bound the truncation bias under varying assumptions on the tail of the BDLP's L{\'e}vy measure. In some cases we introduce the technical assumption of the number of observations $n$ growing at most polynomially in the observation length $T$. This facilitates the proofs considerably and can always be fulfilled in practice by suppressing observations.
\begin{proposition}\label{prop: eta}
Assume that $n\leq c_\delta T^\delta$. Then the following hold true.
\begin{enumerate}
    \item Assume that $\Z$ is continuous, i.e. $\nu(\R^d)=0$. Then for any 
    \[\eta\geq\eta^{\mathrm{cont}}_0\coloneq \sqrt{32\delta \log(T)d\Delta_n\exp(2\Delta_n\Vert\A_0\Vert)\lambda_{\max}(\C+\boldsymbol{\nu}_2)}\] it holds
\[\Delta_n^{-1}\P(\Vert \Delta  \tilde{Z}_1\Vert\geq \eta/2)\leq \frac{2\e c_\delta d}{T}.\]
\item Assume that $\Z$ has bounded jumps, i.e. there exists an $a_0>0,$ such that $\nu(B(0,a_0)^{\mathsf{c}})=0$. Then for any 
\[\eta\geq \eta^{\mathrm{bounded}}_0\coloneq \sqrt{d\delta\log(T)}\exp(\Delta_n\Vert \A_0\Vert)\Big(\sqrt{32\Delta_n\lambda_{\max}(\C+\boldsymbol{\nu}_2)}\lor \frac{8}{3}a_0\sqrt{\delta\log(T)}\Big), \]
it holds
\[\Delta_n^{-1}\P(\Vert \Delta  \tilde{Z}_1\Vert\geq \eta/2)\leq \frac{2\e c_\delta d}{T}.\]
\item Assume that $\nu$ is sub-Weibull with parameter $\alpha>0$. Then for any
\[   \eta\geq \eta^{\mathrm{Wei},\alpha}_{0}\coloneq \sqrt{d}\exp(\Delta_n\Vert\A_0\Vert)\Big(\sqrt{32\delta\log(T)\lambda_{\max}(\C+\boldsymbol{\nu}_2)}\lor \frac{8\delta\log(T)^{1+1/\alpha}}{3c_{\alpha}^{1/\alpha}}\Big),\] it holds 
\[\Delta_n^{-1}\P(\Vert \Delta  \tilde{Z}_1\Vert\geq \eta/2)\leq \frac{2\e c_\delta d}{T}.\]
\item Assume that $\nu$ admits a $p$-th moment for some $p\geq 2$. Then for any
\[\eta\geq \eta^{\mathrm{poly},p}_0\coloneq T^{1/p}d^{1/2-1/p},\] it holds
\begin{align*}
   & \Delta_n^{-1}\P(\Vert \Delta  \tilde{Z}_1\Vert\geq \eta/2) 
   \\
   &\leq\frac{ 2^{2p-1}d}{T}\e^{p\Delta_n\Vert\A_0\Vert_{\mathrm{op}}}\Big(\Delta_n^{p/2-1}\big(\e\lambda_{\max}(\C)^{p/2} (p/d+1)^{p/2}+c_1(\nu_2/d)^{p/2}\big)+c_1\nu_pd^{-p/2}\Big).
\end{align*}
\end{enumerate}
\end{proposition}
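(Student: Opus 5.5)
For the first three cases I will apply Lemma \ref{lemma:prob bound} with $u=\eta/2$, choosing the parameters $a$ and $L$ to fit each tail assumption. The target in every case is to make both terms of that lemma of order $\Delta_n d/T$.

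The Gaussian term is $2d\exp(-(u^2/(4dL)\land 3u/(4a\sqrt d)))$. I will take
\[L=2\Delta_n\exp(2\Delta_n\Vert\A_0\Vert)\lambda_{\max}(\C+\boldsymbol{\nu}_2),\]
the minimal admissible value, or a $\log(T)$-free multiple of it. Then $u^2/(4dL)=\eta^2/(16dL)\ge \delta\log(T)$ as soon as $\eta^2\ge 32\delta\log(T)d\Delta_n\exp(2\Delta_n\Vert\A_0\Vert)\lambda_{\max}(\C+\boldsymbol{\nu}_2)$. With this choice the exponential is at most $T^{-\delta}$. Since $n\le c_\delta T^\delta$, we have $T^{-\delta}\le c_\delta/n=c_\delta\Delta_n/T$. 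Dividing by $\Delta_n$ gives the $2c_\delta d/T$ contribution; the factor $\e$ is slack absorbing constants.

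\textbf{Continuous case.} Since $\nu=0$, the second term vanishes. I let $a\to0$ (or take any $a$, since the jump integral is zero) so that the linear term in the minimum is irrelevant.

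\textbf{Bounded jumps.} I choose $a=a_0\exp(\Delta_n\Vert\A_0\Vert)$. Then $B(0,\exp(-\Delta_n\Vert\A_0\Vert)a)^{\mathsf c}=B(0,a_0)^{\mathsf c}$ has $\nu$-measure zero, so the second term again vanishes. The linear term needs $3\eta/(8a\sqrt d)\ge\delta\log(T)$, which is implied by $\eta\ge \frac83 a_0\exp(\Delta_n\Vert\A_0\Vert)\sqrt d\,\delta\log(T)$. This is covered by the second branch of $\eta_0^{\mathrm{bounded}}$ up to the $\sqrt{\delta\log T}\ge$ const normalization.

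\textbf{Sub-Weibull.} I choose $a=\exp(\Delta_n\Vert\A_0\Vert)(\delta\log(T)/c_\alpha)^{1/\alpha}$. The tail integral is then bounded via
\[\int_{\Vert z\Vert\ge r}\Vert z\Vert^2\nu(\md z)\le \exp(-c_\alpha r^\alpha)\int\Vert z\Vert^2\exp(c_\alpha\Vert z\Vert^\alpha)\nu(\md z)\le 2\nu_2 T^{-\delta}.\]
Multiplying by $2\Delta_nL^{-1}\exp(2\Delta_n\Vert\A_0\Vert)$ cancels $\Delta_n$ against $L$, which is where choosing $L\propto\Delta_n\lambda_{\max}(\C+\boldsymbol{\nu}_2)$ pays off, because $\nu_2=\tr\boldsymbol{\nu}_2\le d\lambda_{\max}(\boldsymbol{\nu}_2)$. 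The result is of order $dT^{-\delta}$. For the Gaussian part I take the $\eta$ requirement with $\Delta_n\le1$ absorbed, matching the first branch. The linear term requires $\eta\gtrsim a\sqrt d\,\delta\log T$, giving the $\log(T)^{1+1/\alpha}$ branch.

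\textbf{Polynomial moment.} Here I will not use Lemma \ref{lemma:prob bound}. Instead I apply Markov's inequality with the $p$-th moment,
\[\P(\Vert\Delta\tilde Z_1\Vert\ge\eta/2)\le 2^p\eta^{-p}\E\Vert\Delta\tilde Z_1\Vert^p.\]
I then bound $\E\Vert\Delta\tilde Z_1\Vert^p\le d^{p/2-1}\sum_i\E|(\Delta\tilde Z_1)_i|^p$ and control each coordinate by a Burkholder/Kunita-type moment inequality for stochastic integrals against Lévy processes. The continuous part is a Gaussian with variance at most $\Delta_n\e^{2\Delta_n\Vert\A_0\Vert}\lambda_{\max}(\C)$, whose Gaussian $p$-th moment gives the $(p/d+1)^{p/2}$-type factor. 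The jump part contributes $c_1(\Delta_n^{p/2}(\nu_2/d)^{p/2}\cdot d+\Delta_n\nu_p)$ after coordinate averaging, with the factor $\e^{p\Delta_n\Vert\A_0\Vert}$. Plugging in $\eta^p\ge T d^{p/2-1}$ and dividing by $\Delta_n$ yields the stated bound.

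The main obstacle is this last step: tracking the dimension dependence in Kunita's inequality so that the stated powers of $d$ come out, in particular $d^{-p/2}\nu_p$ and $(\nu_2/d)^{p/2}$. The first three cases are routine parameter tuning.
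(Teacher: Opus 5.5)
Your continuous and bounded-jump cases are exactly the paper's argument. In the bounded case no normalization is needed: the second branch of $\eta_0^{\mathrm{bounded}}$, prefactor included, equals $\frac83 a_0\e^{\Delta_n\Vert\A_0\Vert}\sqrt d\,\delta\log T$, which is precisely what the linear term requires. The other two cases do not establish the statement as written.

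\textbf{Sub-Weibull.} Taking $L\propto\Delta_n$ hurts you here rather than helping. With it, the factor $\Delta_n$ in the second term of Lemma~\ref{lemma:prob bound} is absorbed by $L$. After the normalization $\Delta_n^{-1}$, the tail integral must then be $O(1/n)$. Knowing only $n\le c_\delta T^\delta$, this forces $T^{-\delta}$ and hence your $a=\e^{\Delta_n\Vert\A_0\Vert}(\delta\log T/c_\alpha)^{1/\alpha}$. The linear branch then needs $\eta\ge\frac83\sqrt d\,\e^{\Delta_n\Vert\A_0\Vert}\delta^{1+1/\alpha}\log(T)^{1+1/\alpha}c_\alpha^{-1/\alpha}$, which is a factor $\delta^{1/\alpha}$ above the stated threshold. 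For $\delta>1$ you therefore do not cover all $\eta\ge\eta_0^{\mathrm{Wei},\alpha}$; this is the high-frequency case, since $\Delta_n^{-1}=n/T\le c_\delta T^{\delta-1}$.

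The paper instead takes $L=2\e^{2\Delta_n\Vert\A_0\Vert}\lambda_{\max}(\C+\boldsymbol{\nu}_2)$, admissible for $\Delta_n\le1$, and $a=\e^{\Delta_n\Vert\A_0\Vert}(\log T/c_\alpha)^{1/\alpha}$. The lemma's $\Delta_n$ then cancels the normalization, and the tail contributes $2\nu_2/(\lambda_{\max}(\C+\boldsymbol{\nu}_2)T)\le 2d/T$ without using $n\le c_\delta T^\delta$. The Gaussian term requires exactly $\eta^2\ge 32\delta\log(T)\,d\,\e^{2\Delta_n\Vert\A_0\Vert}\lambda_{\max}(\C+\boldsymbol{\nu}_2)$. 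This is why the first branch of $\eta_0^{\mathrm{Wei},\alpha}$, unlike in the first two cases, carries no $\Delta_n$. The two pieces sum to $2(c_\delta+1)d/T$, which is at most $2\e c_\delta d/T$ once $c_\delta\ge(\e-1)^{-1}$.

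\textbf{Polynomial moment.} Your Markov step is the paper's. The obstacle you flag afterwards is a real gap, and the coordinatewise route cannot close it. The step $\Vert x\Vert^p\le d^{p/2-1}\sum_i|x_i|^p$ loses a factor $d^{p/2-1}$ that is never recovered. Take $\A_0$ diagonal and $\nu$ carried by the coordinate axes, so that $\sum_i|z_i|^p=\Vert z\Vert^p$. Per-coordinate Kunita plus summation then gives $d^{p/2-1}\Delta_n\nu_p$ for the large-jump part. After dividing by $\Delta_n\eta^p\ge\Delta_n Td^{p/2-1}$ you are left with $\nu_p/T$ instead of the claimed $d\cdot d^{-p/2}\nu_p/T$. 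Similarly, your $d(\nu_2/d)^{p/2}$ presumes that each coordinate carries a share $\nu_2/d$ of the second moment, i.e.\ isotropy, which is not assumed. For jumps along a single axis that term is again off by $d^{p/2-1}$.

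The paper never passes to coordinates. It applies Kunita's inequality directly to the $\R^d$-valued integral $\int_0^{\Delta_n}\!\int\e^{-(\Delta_n-s)\A_0}z\,\tilde N(\md s,\md z)$ in the Euclidean norm. The constant $c_1$ depends only on $p$, read off from the proof of Theorem 4.4.23 of \cite{applebaum09}. This gives $c_1\Delta_n\e^{p\Delta_n\Vert\A_0\Vert}(\nu_p+\Delta_n^{p/2-1}\nu_2^{p/2})$.

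For the Gaussian part $G$ it uses $\E\Vert G\Vert^p\le\lambda_{\max}(\operatorname{Cov}G)^{p/2}\E\Vert\mathsf N\Vert^p$. It then applies the chi moment $\E\Vert\mathsf N\Vert^p=2^{p/2}\Gamma(\frac{p+d}{2})/\Gamma(\frac d2)$ and Stirling, giving $\e(\Delta_n\lambda_{\max}(\C)\e^{2\Delta_n\Vert\A_0\Vert})^{p/2}(p+d)^{p/2}$. Coordinates would cost only a $p$-dependent constant for this part, but not the stated one.

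Finally, $(x+y)^p\le 2^{p-1}(x^p+y^p)$ produces the factor $2^{2p-1}$. The bound $\eta^{-p}\le d^{1-p/2}/T$ then turns $(p+d)^{p/2}$, $\nu_2^{p/2}$ and $\nu_p$ into exactly $d(p/d+1)^{p/2}$, $d(\nu_2/d)^{p/2}$ and $d\cdot d^{-p/2}\nu_p$.
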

\begin{proof}
We first investigate the continuous case, i.e. $\nu(\R^d)=0.$ Then, we can choose $a$ in Lemma \ref{lemma:prob bound} arbitrarily small to obtain
\begin{align*}
    &\Delta_n^{-1}\P(\Vert \Delta  \tilde{Z}_1\Vert\geq \eta/2)
    \\
    &\leq \frac{2d}{\Delta_n}\exp\Big(-\frac{\eta^2}{32d\Delta_n\exp(2\Delta_n\Vert\A_0\Vert)\lambda_{\max}(\C+\boldsymbol{\nu}_2)}\Big)
    \\
    &\leq \frac{2c_\delta d}{T}\exp\Big(\delta\log(T)-\frac{\eta^2}{32d\Delta_n\exp(2\Delta_n\Vert\A_0\Vert)\lambda_{\max}(\C+\boldsymbol{\nu}_2)}\Big),
\end{align*}
and thus choosing $\eta\geq\eta^{\mathrm{cont}}_0\coloneq \sqrt{32\delta \log(T)d\Delta_n\exp(2\Delta_n\Vert\A_0\Vert)\lambda_{\max}(\C+\boldsymbol{\nu}_2)}$ gives
\[\Delta_n^{-1}\P(\Vert \Delta  \tilde{Z}_1\Vert\geq \eta/2)\leq \frac{2 \e c_\delta d}{T}.\]
Now, assume that $\Z$ has bounded jumps, i.e. there exists $a_0>0,$ such that 
\begin{align*}
    \nu(B(0,a_0)^{\mathsf{c}})=0.
\end{align*}
Then we obtain by Lemma \ref{lemma:prob bound} 
\begin{align*}
    &\Delta_n^{-1}\P(\Vert \Delta  \tilde{Z}_1\Vert\geq \eta/2)
    \\
    &\leq \frac{2c_\delta d}{T}\exp\Big(\delta\log(T)-\Big(\frac{\eta^2}{32d\Delta_n\exp(2\Delta_n\Vert\A_0\Vert)\lambda_{\max}(\C+\boldsymbol{\nu}_2)}\land \frac{\eta}{\frac
        {8}{3}\exp(\Delta_n\Vert\A_0\Vert)a_0\sqrt{d}}\Big)\Big).
\end{align*}
Thus choosing
\[\eta\geq \eta^{\mathrm{bounded}}_0\coloneq \sqrt{d\delta\log(T)}\exp(\Delta_n\Vert \A_0\Vert)\Big(\sqrt{32\Delta_n\lambda_{\max}(\C+\boldsymbol{\nu}_2)}\lor \frac{8}{3}a_0\sqrt{\delta\log(T)}\Big), \]
concludes the proof.
Now assume that  $\nu$ is sub-Weibull with parameter $\alpha>0,$ i.e. there exists a constant $c_\alpha>0,$ such that
\begin{align*}
  \int \Vert z\Vert^2\exp(c_\alpha\Vert z\Vert^{\alpha})\nu(\md z)\leq 2\nu_2.
\end{align*}
Then we obtain by Lemma \ref{lemma:prob bound} for any $u,a>0, L>2\Delta_n\exp(2\Delta_n\Vert\A_0\Vert)\lambda_{\max}(\C+\boldsymbol{\nu}_2)$
\begin{align*}
    &\Delta_n^{-1}\P(\Vert \Delta  \tilde{Z}_1\Vert\geq \eta/2)
    \\
    &\leq 2\Delta_n^{-1}d\exp\Big(-\Big(\frac{\eta^2}{16dL}\land \frac{\eta}{\frac
        {8}{3}a\sqrt{d}}\Big)\Big)+2L^{-1}\exp(2\Delta_n\Vert\A_0\Vert)\int_{B(0,\exp(-\Delta_n\Vert\A_0\Vert )a)^{\mathsf{c}}}\Vert z\Vert^2\nu(\md z)
        \\
        &\leq 2\Delta_n^{-1}d\exp\Big(-\Big(\frac{\eta^2}{16dL}\land \frac{\eta}{\frac
        {8}{3}a\sqrt{d}}\Big)\Big)+4\nu_2L^{-1}\exp(2\Delta_n\Vert\A_0\Vert)\exp(-c_\alpha(\exp(-\Delta_n\Vert\A_0\Vert )a)^\alpha).
\end{align*}
Choosing $a=\exp(\Delta_n\Vert\A_0\Vert)(c_\alpha^{-1}\log(T))^{1/\alpha},L=2\exp(2\Delta_n\Vert\A_0\Vert)\lambda_{\max}(\C+\boldsymbol{\nu}_2)$ thus implies if $\Delta_n\leq 1$
\begin{align*}
    &\Delta_n^{-1}\P(\Vert \Delta  \tilde{Z}_1\Vert\geq \eta/2)
        \\
        &\leq 2\Delta_n^{-1}d\exp\Big(-\Big(\frac{\eta^2}{32\exp(2\Delta_n\Vert\A_0\Vert)\lambda_{\max}(\C+\boldsymbol{\nu}_2)d}\land \frac{\eta}{\frac
        {8}{3}\exp(\Delta_n\Vert\A_0\Vert)(c_\alpha^{-1}\log(T))^{1/\alpha}\sqrt{d}}\Big)\Big)
        \\
        &\quad+2\frac{\nu_2}{\lambda_{\max}(\C+\boldsymbol{\nu}_2)T}
             \\
        &\leq 2c_\delta\frac{d}{T}\exp\Big(\delta\log(T)-\Big(\frac{\eta^2}{32\exp(2\Delta_n\Vert\A_0\Vert)\lambda_{\max}(\C+\boldsymbol{\nu}_2)d}\land \frac{\eta}{\frac
        {8}{3}\exp(\Delta_n\Vert\A_0\Vert)(c_\alpha^{-1}\log(T))^{1/\alpha}\sqrt{d}}\Big)\Big)
        \\
        &\quad+2\frac{d}{T}
\end{align*}
Hence choosing
\begin{align*}
    \eta\geq \eta^{\mathrm{Wei},\alpha}_{0}\coloneq \sqrt{d}\exp(\Delta_n\Vert\A_0\Vert)\Big(\sqrt{32\delta\log(T)\lambda_{\max}(\C+\boldsymbol{\nu}_2)}\lor \frac{8\delta\log(T)^{1+1/\alpha}}{3c_{\alpha}^{1/\alpha}}\Big)
\end{align*}
concludes the proof.
Lastly, we inspect the polynomial case, i.e. we assume that there exists $p\geq 2,$ such that
\begin{align*}
\nu_p=\int\Vert z\Vert^p\nu(\md z)<\infty.
\end{align*}
For this case we directly apply Markov's inequality, which implies
\begin{align}
\begin{split}\label{eq: p mark}
     &\Delta_n^{-1}\P(\Vert \Delta  \tilde{Z}_1\Vert\geq \eta/2) 
   \\
   &\leq 2^p\Delta_n^{-1}\eta^{-p}\E[\Vert \Delta  \tilde{Z}_1\Vert^p]
   \\
   &\leq\frac{ 2^{2p-1}}{\Delta_n\eta^{p}}\Big(\E\Big[\Vert \int_0^{\Delta_n} \exp(-(\Delta_n-s)\A_0)\bSigma\d W_s\Vert^p\Big]+\E\Big[\Vert \int_0^{\Delta_n} \int_{\R^d} \exp(-(\Delta_n-s)\A_0)z\tilde{N}(\d s,\d z)\Vert^p\Big]\Big).  
\end{split}
\end{align}
As $\int_{0}^{\Delta_n}\e^{-(\Delta_n-s)\A_0}\bSigma\d W_s$ is distributed according to a centered multivariate normal distribution with covariance matrix $\int_{0}^{\Delta_n}\e^{-(\Delta_n-u)\A_0}\C\e^{-(\Delta_n-u)\A_0^\top}\d s,$ we obtain for $\mathsf{N}$ denoting a d-dimensional standard normal random vector
\begin{align*}
    &\E[\Vert \int_{0}^{\Delta_n}\e^{-(\Delta_n-s)\A_0}\bSigma\d W_s\Vert^p]
    \\
    &\leq \lambda_{\max}\Big(\int_{0}^{\Delta_n}\e^{-(\Delta_n-u)\A_0}\C\e^{-(\Delta_n-u)\A_0^\top}\d s\Big)^{p/2}  \E[\Vert N\Vert^p]
    \\
    &=\lambda_{\max}\Big(\int_{0}^{\Delta_n}\e^{-(\Delta_n-u)\A_0}\C\e^{-(\Delta_n-u)\A_0^\top}\d s\Big)^{p/2} 2^{p/2}\frac{\Gamma\big(\frac{p+d}{2}\big)}{\Gamma\big(\frac{d}{2}\big)}
    \\
    &\leq (2\Delta_n\lambda_{\max}(\C) \exp(2\Delta_n\Vert \A_0\Vert))^{p/2}\frac{\Gamma\big(\frac{p+d}{2}\big)}{\Gamma\big(\frac{d}{2}\big)}
    \\
    &\leq\e (2\Delta_n\lambda_{\max}(\C) \exp(2\Delta_n\Vert \A_0\Vert))^{p/2} \Big(\frac{p+d}{2\e}\Big)^{p/2}\Big(\frac{p+d}{d}\Big)^{d/2}
       \\
    &\leq \e(\Delta_n\lambda_{\max}(\C) \exp(2\Delta_n\Vert \A_0\Vert))^{p/2} (p+d)^{p/2},
\end{align*}
where we used Stirling's formula together with the inequality $1+x\leq\exp(x),$ which is valid for any $x\in\R,$ in the last steps.
Additionally, inspecting the proof of Theorem 4.4.23 in \cite{applebaum09} gives that it holds for some constant $c_1>0,$ depending only on $p$ and independent of the dimension, 
\begin{align*}
        &\E[\Vert \int_{0}^{\Delta_n}\int_{\R^d}\e^{-(\Delta_n-u)\A_0}z\tilde{N}(\d s,\d z)\Vert^p]
\\
    &\leq c_1\Big( \int_{0}^{\Delta_n}\int_{\R^d}\Vert\e^{-(\Delta_n-u)\A_0}z\Vert^p\nu(\md z)\d s+\Big( \int_{0}^{\Delta_n}\int_{\R^d}\Vert\e^{-(\Delta_n-u)\A_0}z\Vert^2\nu(\md z)\d s\Big)^{p/2}\Big)  
    \\
    &\leq c_1\Delta_n\e^{p\Delta_n\Vert\A_0\Vert_{\mathrm{op}}}(\nu_p+\Delta_n^{p/2-1}\nu_2^{p/2}). 
\end{align*}
Plugging these bounds into \eqref{eq: p mark} gives
\begin{align*}
   & \Delta_n^{-1}\P(\Vert \Delta  \tilde{Z}_1\Vert\geq \eta/2) 
   \\
   &\leq \frac{ 2^{2p-1}}{\eta^{p}}\e^{p\Delta_n\Vert\A_0\Vert_{\mathrm{op}}}\Big(\Delta_n^{p/2-1}\e\lambda_{\max}(\C)^{p/2} (p+d)^{p/2}+c_1(\nu_p+\Delta_n^{p/2-1}\nu_2^{p/2})\Big),
\end{align*}
and thus for any 
\[\eta\geq \eta^{\mathrm{poly},p}_0\coloneq T^{1/p}d^{1/2-1/p},\]
it holds
\begin{align*}
    & \Delta_n^{-1}\P(\Vert \Delta  \tilde{Z}_1\Vert\geq \eta/2) 
   \\
   &\leq \frac{ 2^{2p-1}d}{T}\e^{p\Delta_n\Vert\A_0\Vert_{\mathrm{op}}}\Big(\Delta_n^{p/2-1}\big(\e\lambda_{\max}(\C)^{p/2} (p/d+1)^{p/2}+c_1(\nu_2/d)^{p/2}\Big)+c_1\nu_pd^{-p/2}\Big).
\end{align*}
\end{proof}
\subsection{Proof of Theorem \ref{thm: main}}
We are now ready to state the proof of our main result.
\begin{proof}[Proof of Theorem \ref{thm: main}]
    Throughout the proof we assume that the event 
    \begin{equation}\label{eq: lambda}
        \   \Lambda\coloneq\Big\{\sup_{\M\in\mathcal{D}^*(\ep)}\vert\tilde{\mathcal{S}}(\M)\vert\leq c_{\star} \sqrt{\gamma(\delta_n)T}\Big\}\cap Q_T    
    \end{equation} occurs, which by Corollary \ref{cor:fin} happens with probability larger than $1-\ep_0-\ep$.
    First, we obtain by Lemma \ref{lemma: L2 frobenius disc} that
    \begin{align*}
        &\Vert(\hat{\A}-\A_0)X\|_{l^2_n(B,X,\eta)}^2 \\
    &\leq\|(\A-\A_0) X\|_{l^2_n(B,X,\eta)}^2-\|(\hat{\A}-\A)X\|_{l^2_n(B,X,\eta)}^2 + 2 \left(h(\A)-h(\hat{\A}) \right) \\ 
    & \quad + \frac{2}{T} \Big(  \mathcal{D}(\A-\hat{\A})+\mathcal{S}(\A-\hat{\A})\Big),
    \end{align*}
    where $\mathcal{D}$ and $\mathcal{S}$ are defined in \eqref{eq:def errs}.
    Now, applying Proposition \ref{prop: discretizationdex} and Lemma \ref{lemma: st decomp} with $c=3$ gives
    \begin{align*}
       &\Vert(\hat{\A}-\A_0)X\|_{l^2_n(B,X,\eta)}^2 \\
    &\leq\|(\A-\A_0) X\|_{l^2_n(B,X,\eta)}^2-\frac{1}{3}\|(\hat{\A}-\A)X\|_{l^2_n(B,X,\eta)}^2 + 2 \left(h(\A)-h(\hat{\A}) \right) \\ 
    & \quad+3\Delta_n^2\Vert \hat{\C}^\eta_n(B)\Vert\Vert\A_0\Vert_{2}^4\Big(\frac{1}{2}+\frac{1}{6}\exp(\Delta_n\Vert\A_0\Vert_{2})\Delta_n\Vert\A_0\Vert_{2}\Big)^2 
    \\
&\quad +\frac{3}{\Delta_n} \lambda_{\max}(\C+\boldsymbol{\nu}_2)\exp(\Delta_n\Vert\A_0\Vert)\P(\Vert \Delta  \tilde{Z}_1\Vert\geq \eta_- )  
    \\&\quad+ \frac{2}{T}\vert\tilde{\mathcal{S}}(\A-\hat{\A})\vert.
    \end{align*}
Since the event $\Lambda$ occurs, we can bound $\tilde{\mathcal{S}}(\A-\hat{\A})$ in the following way
  \begin{align}
  \begin{split}\label{eq: thm proof}
         &\Vert(\hat{\A}-\A_0)X\|_{l^2_n(B,X,\eta)}^2 +h(\A-\hat{\A})\\
    &\leq\|(\A-\A_0) X\|_{l^2_n(B,X,\eta)}^2-\frac{1}{3}\|(\hat{\A}-\A)X\|_{l^2_n(B,X,\eta)}^2  \\ 
    & \quad+3\Delta_n^2\lambda_{\max}(\C_\infty(B))\Vert\A_0\Vert_{2}^4\Big(\frac{1}{2}+\frac{1}{6}\exp(\Delta_n\Vert\A_0\Vert_{2})\Delta_n\Vert\A_0\Vert_{2}\Big)^2 
    \\
&\quad +\frac{3}{\Delta_n} \lambda_{\max}(\C+\boldsymbol{\nu}_2)\exp(\Delta_n\Vert\A_0\Vert)\P(\Vert \Delta  \tilde{Z}_1\Vert\geq \eta_- )  
    \\&\quad+ 2\Bigg(c_\star\sqrt{\frac{\gamma(\Delta_n)}{T}}(\Vert \A-\hat{\A}\Vert_\star\lor \log(2/\ep)^{1/2}\Vert\A-\hat{\A}\Vert_2)+\frac{1}{2}h(\A-\hat{\A})+h(\A)-h(\hat{\A})\Bigg),    
  \end{split}
    \end{align}
    where we also added $h(\A-\hat{\A})$ on both sides of the inequality. In order to prove the assertions, it now suffices to bound the last summand
    \[\Upsilon\coloneq 2\Bigg(c_\star\sqrt{\frac{\gamma(\Delta_n)}{T}}(\Vert \A-\hat{\A}\Vert_\star\lor \log(2/\ep)^{1/2}\Vert\A-\hat{\A}\Vert_2)+\frac{1}{2}h(\A-\hat{\A})+h(\A)-h(\hat{\A})\Bigg).\] For this we start with the Lasso case in which $h\equiv \Vert\cdot\Vert_1$. By the Cauchy--Schwarz inequality and equation (2.7) in \cite{belets18} we obtain for any $\M\in\R^{d\times d}$
    \begin{align*}
        \Vert \M\Vert_\star&\leq \sqrt{\sum_{i=1}^s\log\Big(\frac{2 d^2}{i}\Big)}\Vert \M\Vert_2+\sum_{i=s+1}^{d^2}\sqrt{\log\Big(\frac{2d^2}{i}\Big)}\vec(\A-\hat{\A})_i^{\#}
        \\
        &\leq \sqrt{s\log\Big(\frac{2\e d^2}{s}\Big)}\Vert \M\Vert_2+\sum_{i=s+1}^{d^2}\sqrt{\log\Big(\frac{2d^2}{i}\Big)}\vec(\A-\hat{\A})_i^{\#},
    \end{align*}
    and since the event $\Lambda$ occurs it also holds
\begin{align}
\begin{split}\label{eq: lambda cov}
   \Vert\M\Vert_2^2&\leq \lambda_{\min}(\hat{\C}^\eta_n)^{-1}\tr(\M\hat{\C}^\eta_n\M^\top)
\\
&=\lambda_{\min}(\hat{\C}^\eta_n)^{-1}\Vert \M\Vert^2_{l^2_n(B,X,\eta)}
\\
&\leq 4\lambda_{\min}(\C_\infty(B))^{-1}\P(\Vert\Delta\tilde{Z}_1\Vert\leq\eta_-)^{-1}\Vert \M\Vert^2_{l^2_n(B,X,\eta)}. 
\end{split}
\end{align}
Combining this with the assumption on the tuning parameter $\lambda_L(T)\geq 2c_\star\sqrt{\log(2ed^2/s)\gamma(\Delta_n)/T}$ and Lemma A.1 in \cite{belets18} gives
\begin{align}
\begin{split}\label{eq: thm 2}
      &\Upsilon
    \\
    &\leq \lambda_L\Bigg(\log(2\e d^2/s)^{-1/2}(\Vert \A-\hat{\A}\Vert_\star\lor \log(2/\ep)^{1/2}\Vert\A-\hat{\A}\Vert_2)+3\sqrt{s}\Vert\A-\hat{\A}\Vert_2-\sum_{i=s+1}^{d^2}\vec(\A-\hat{\A})^{\#}_i\Bigg)
    \\
    &\leq \lambda_L\Bigg(\Big(\sqrt{s}\Vert \A-\hat{\A}\Vert_2+\sum_{i=s+1}^{d^2}\vec(\A-\hat{\A})^{\#}_i\lor \frac{2\log(2/\ep)^{1/2}\Vert\A-\hat{\A}\Vert_{l^2_n(B,X,\eta)}}{(\log(2\e d^2/s)\lambda_{\min}(\C_\infty(B))\P(\Vert\Delta\tilde{Z}_1\Vert\leq \eta_-))^{1/2}}\Big)
    \\&\quad+3\sqrt{s}\Vert\A-\hat{\A}\Vert_2-\sum_{i=s+1}^{d^2}\vec(\A-\hat{\A})^{\#}_i\Bigg),  
\end{split}
\end{align}

We continue with considering the two different cases due to the maximum term. First, we have that if 
\[\sqrt{s}\Vert \A-\hat{\A}\Vert_2+\sum_{i=s+1}^{d^2}\vec(\A-\hat{\A})^{\#}_i\geq 2\log(2/\ep)^{1/2}\frac{\Vert\A-\hat{\A}\Vert_{l^2_n(B,X,\eta)}}{(\log(2\e d^2/s)\lambda_{\min}(\C_\infty(B))\P(\Vert\Delta\tilde{Z}_1\Vert\leq\eta_-))^{1/2}}\]
it holds
\begin{align*}
    \Upsilon&\leq 4\sqrt{s}\lambda_L\Vert \A-\hat{\A}\Vert_2
    \\
    &\leq 8\lambda_{\min}(\C_\infty(B))^{-1/2}\P(\Vert\Delta\tilde{Z}_1\Vert\leq\eta_-)^{-1/2}\sqrt{s}\lambda_L\Vert \A-\hat{\A}\Vert_{l^2_n(B,X,\eta)}
    \\
    &\leq 48\lambda_{\min}(\C_\infty(B))^{-1}\P(\Vert\Delta\tilde{Z}_1\Vert\leq\eta_-)^{-1}s\lambda^2_L+\frac{1}{3}\Vert \A-\hat{\A}\Vert^2_{l^2_n(B,X,\eta)},
\end{align*}
where we used Young's inequality in the last step.
In the opposite case, it follows that
\[\sqrt{s}\Vert \A-\hat{\A}\Vert_2\leq 2\log(2/\ep)^{1/2}\frac{\Vert\A-\hat{\A}\Vert_{l^2_n(B,X,\eta)}}{(\log(2ed^2/s)\lambda_{\min}(\C_\infty(B))\P(\Vert\Delta\tilde{Z}_1\Vert\leq\eta_-))^{1/2}},\]
which implies
\begin{align*}
    \Upsilon&\leq \lambda_L\Bigg( 2\log(2/\ep)^{1/2}\frac{\Vert\A-\hat{\A}\Vert_{l^2_n(B,X,\eta)}}{(\log(2\e d^2/s)\lambda_{\min}(\C_\infty(B))\P(\Vert\Delta\tilde{Z}_1\Vert\leq\eta_-))^{1/2}}
    +3\sqrt{s}\Vert\A-\hat{\A}\Vert_2\Bigg)
    \\
    &\leq 8\lambda_L\log(2/\ep)^{1/2}\frac{\Vert\A-\hat{\A}\Vert_{l^2_n(B,X,\eta)}}{(\log(2\e d^2/s)\lambda_{\min}(\C_\infty(B))\P(\Vert\Delta\tilde{Z}_1\Vert\leq\eta_-))^{1/2}}
       \\
    &\leq 48\frac{\lambda^2_L\log(2/\ep)}{\log(2\e d^2/s)\lambda_{\min}(\C_\infty(B))\P(\Vert\Delta\tilde{Z}_1\Vert\leq\eta_-)}+\frac{1}{3}\Vert \A-\hat{\A}\Vert^2_{l^2_n(B,X,\eta)},
\end{align*}
where we again applied Young's inequality. Inserting the bounds on $\Upsilon$ in \eqref{eq: thm proof} and noting that the assumption on $\eta$ implies $\eta_-\geq \eta/2$ completes the proof for the Lasso estimator. We continue by bounding $\Upsilon$ for the Slope estimator. By the assumption $\lambda_S(T)\geq 2c_\star\sqrt{\frac{\gamma(\Delta_n)}{T}}$ we get
\begin{align*}
    \Upsilon&\leq  \lambda_S\Bigg((\Vert \A-\hat{\A}\Vert_\star\lor \log(2/\ep)^{1/2}\Vert\A-\hat{\A}\Vert_2)+\Vert\A-\hat{\A}\Vert_\star+2\Vert\A\Vert_\star-2\Vert\hat{\A}\Vert_\star\Bigg),
\end{align*}
and by Lemma A.1 in \cite{belets18} it holds
\begin{align*}
   &\frac{1}{2}\Vert\A-\hat{\A}\Vert_\star+\Vert\A\Vert_\star-\Vert\hat{\A}\Vert_\star 
   \\
   &\leq \frac{3}{2}\sqrt{\sum_{i=1}^s\log\Big(\frac{2 d^2}{i}\Big)}\Vert \A-\hat{\A}\Vert_2-\frac{1}{2}\sum_{i=s+1}^{d^2}\sqrt{\log\Big(\frac{2d^2}{i}\Big)}\vec(\A-\hat{\A})_i^{\#}
   \\
   &\leq \frac{3}{2}\sqrt{s\log\Big(\frac{2\e d^2}{s}\Big)}\Vert \A-\hat{\A}\Vert_2-\frac{1}{2}\sum_{i=s+1}^{d^2}\sqrt{\log\Big(\frac{2 d^2}{i}\Big)}\vec(\A-\hat{\A})_i^{\#}
\end{align*}
where we used equation (2.7) in \cite{belets18} in the last step. Arguing as in the derivation of \eqref{eq: thm 2} we thus obtain
\begin{align*}
\Upsilon
&\leq \lambda_S\Bigg(\Big(\sqrt{s\log\Big(\frac{2\e d^2}{s}\Big)}\Vert \A-\hat{\A}\Vert_2+\sum_{i=s+1}^{d^2}\vec(\A-\hat{\A})^{\#}_i\lor \frac{2\log(2/\ep)^{1/2}\Vert\A-\hat{\A}\Vert_{l^2_n(B,X,\eta)}}{(\lambda_{\min}(\C_\infty(B))\P(\Vert\Delta\tilde{Z}_1\Vert\leq\eta_-))^{1/2}}\Big)
    \\&\quad+3\sqrt{s\log\Big(\frac{2\e d^2}{s}\Big)}\Vert \A-\hat{\A}\Vert_2-\sum_{i=s+1}^{d^2}\sqrt{\log\Big(\frac{2 d^2}{i}\Big)}\vec(\A-\hat{A})_i^{\#}\Bigg),
\end{align*}
and we again continue by investigating the two cases caused by the maximum operator. If
\[\sqrt{s\log\Big(\frac{2\e d^2}{s}\Big)}\Vert \A-\hat{\A}\Vert_2+\sum_{i=s+1}^{d^2}\vec(\A-\hat{\A})^{\#}_i\geq \frac{2\log(2/\ep)^{1/2}\Vert\A-\hat{\A}\Vert_{l^2_n(B,X,\eta)}}{(\lambda_{\min}(\C_\infty(B))\P(\Vert\Delta\tilde{Z}_1\Vert\leq\eta_-))^{1/2}},\]
we obtain by arguing as in the Lasso case
\begin{align*}
    \Upsilon&\leq 4\lambda_S\sqrt{s\log\Big(\frac{2\e d^2}{s}\Big)}\Vert \A-\hat{\A}\Vert_2
    \\
    &\leq 8\lambda_S\lambda_{\min}(\C_\infty(B))^{-1/2}\P(\Vert\Delta\tilde{Z}_1\Vert\leq\eta_-)^{-1/2}\sqrt{s\log\Big(\frac{2\e d^2}{s}\Big)}\Vert \A-\hat{\A}\Vert_{l^2_n(B,X,\eta)}
    \\
    &\leq 48\lambda^2_S\lambda_{\min}(\C_\infty(B))^{-1}\P(\Vert\Delta\tilde{Z}_1\Vert\leq\eta_-)^{-1}s\log\Big(\frac{2\e d^2}{s}\Big)+\frac{1}{3}\Vert \A-\hat{\A}\Vert^2_{l^2_n(B,X,\eta)}.
\end{align*}
Similarly as before, the other case implies
\[\sqrt{s\log\Big(\frac{2\e d^2}{s}\Big)}\Vert \A-\hat{\A}\Vert_2\leq \frac{2\lambda_S\log(2/\ep)^{1/2}\Vert\A-\hat{\A}\Vert_{l^2_n(B,X,\eta)}}{(\lambda_{\min}(\C_\infty(B))\P(\Vert\Delta\tilde{Z}_1\Vert\leq\eta_-))^{1/2}},\]
and thus
\begin{align*}
    \Upsilon&\leq \frac{8\lambda_S\log(2/\ep)^{1/2}\Vert\A-\hat{\A}\Vert_{l^2_n(B,X,\eta)}}{(\lambda_{\min}(\C_\infty(B))\P(\Vert\Delta\tilde{Z}_1\Vert\leq\eta_-))^{1/2}}
    \\
    &\leq 48\frac{\lambda_S^2\log(2/\ep)}{\lambda_{\min}(\C_\infty(B))\P(\Vert\Delta\tilde{Z}_1\Vert\leq\eta_-)}+\frac{1}{3}\Vert \A-\hat{\A}\Vert^2_{l^2_n(B,X,\eta)}.
\end{align*}
Inserting the above bounds in \eqref{eq: thm proof} together with the assumption on $\eta$ concludes the proof.
\end{proof}
Corollary \ref{cor: main} follows almost immediately from Theorem \ref{thm: main}.
\begin{proof}[Proof of Corollary \ref{cor: main}]
We only give the proof for the Slope estimator, the proof for the Lasso estimator is completely analogous.
Since the assumptions of Theorem \ref{thm: main} are satisfied, we have that for any $\ep>0$ the event $\Lambda$ defined in \eqref{eq: lambda} and the bound 
 \begin{align*}
                 &\Vert(\hat{\A}_{\mathsf{S}}-\A_0)X\|_{l^2_n(B,X,\eta)}^2 +\lambda_{\mathsf{S}}\Vert\hat{\A}_{\mathsf{S}}-\A\Vert_\star\\
    &\leq\|(\A-\A_0) X\|_{l^2_n(B,X,\eta)}^2 +3\Delta_n^2\lambda_{\max}(\C_\infty(B))\Vert\A_0\Vert_{2}^4\Big(\frac{1}{2}+\frac{1}{6}\exp(\Delta_n\Vert\A_0\Vert_{2})\Delta_n\Vert\A_0\Vert_{2}\Big)^2 
    \\
&\quad +\frac{3}{\Delta_n} \lambda_{\max}(\C+\boldsymbol{\nu}_2)\exp(\Delta_n\Vert\A_0\Vert)\P(\Vert \Delta  \tilde{Z}_1\Vert\geq \eta/2 )  
    \\&\quad+ \frac{48\lambda^2_{\mathsf{S}}}{\lambda_{\min}(\C_\infty(B))\P(\Vert\Delta\tilde{Z}_1\Vert\leq\eta/2)}\Bigg(s\log\Big(\frac{2\e d^2}{s}\Big)\lor\log(2/\ep)\Bigg),
    \end{align*}
    hold true with probability larger than $1-\ep_0-\ep$ for any $s$-sparse matrix $\A$. Now, since $\A_0$ is assumed to be $s$-sparse, we can choose $\A=\A_0$, which gives the following together with the fact that $\eta$ satisfies condition \ref{ass: eta}
     \begin{align*}
                 &\Vert(\hat{\A}_{\mathsf{S}}-\A_0)X\|_{l^2_n(B,X,\eta)}^2 +\lambda_{\mathsf{S}}\Vert\hat{\A}_{\mathsf{S}}-\A_0\Vert_\star\\
           &\leq3\Delta_n^2\lambda_{\max}(\C_\infty(B))\Vert\A_0\Vert_{2}^4\Big(\frac{1}{2}+\frac{1}{6}\exp(\Delta_n\Vert\A_0\Vert_{2})\Delta_n\Vert\A_0\Vert_{2}\Big)^2 
    \\
&\quad +3c_\eta \lambda_{\max}(\C+\boldsymbol{\nu}_2)\exp(\Delta_n\Vert\A_0\Vert)\P(\Vert \Delta  \tilde{Z}_1\Vert\geq \eta/2 )  
    \\&\quad+ \frac{48\lambda^2_{\mathsf{S}}}{\lambda_{\min}(\C_\infty(B))\P(\Vert\Delta\tilde{Z}_1\Vert\leq\eta/2)}\Bigg(s\log\Big(\frac{2\e d^2}{s}\Big)\lor\log(2/\ep)\Bigg)       
                 \\
    &\leq3\Delta_n^2\lambda_{\max}(\C_\infty(B))\Vert\A_0\Vert_{2}^4\Big(\frac{1}{2}+\frac{1}{6}\exp(\Delta_n\Vert\A_0\Vert_{2})\Delta_n\Vert\A_0\Vert_{2}\Big)^2 
    \\
 &\quad+\lambda_{\mathsf{S}}^2\Big(\frac{3c_\eta}{4c_\star^2}
+ \frac{48}{\lambda_{\min}(\C_\infty(B))\P(\Vert\Delta\tilde{Z}_1\Vert\leq\eta/2)}\Big)\Bigg(s\log\Big(\frac{2\e d^2}{s}\Big)\lor\log(2/\ep)\Bigg),
    \end{align*}
    where we used the assumption on $\lambda_{\mathsf{S}},$ together with the fact that assumption \ref{ass: ergodicity} implies $s\geq d$ in the last step. Now since \eqref{eq: lambda cov} is satisfied on $\Lambda$ and $\eta_-\geq \eta/2$ we obtain
    \begin{align*}
                       &\Vert(\hat{\A}_{\mathsf{S}}-\A_0)X\|_{2}^2 +4\lambda_{\mathsf{S}}\lambda_{\min}(\C_\infty(B))^{-1}\Vert\hat{\A}_{\mathsf{S}}-\A_0\Vert_\star     
                 \\
    &\leq24\Delta_n^2\kappa(\C_\infty(B))\Vert\A_0\Vert_{2}^4\Big(\frac{1}{2}+\frac{1}{6}\exp(\Delta_n\Vert\A_0\Vert_{2})\Delta_n\Vert\A_0\Vert_{2}\Big)^2 
    \\
 &\quad+4\lambda_{\mathsf{S}}^2\Big(\frac{3c_\eta}{2c_\star^2\lambda_{\min}(\C_\infty(B))}
+ \frac{192}{\lambda_{\min}(\C_\infty(B))^2}\Big)\Bigg(s\log\Big(\frac{2\e d^2}{s}\Big)\lor\log(2/\ep)\Bigg),
    \end{align*}
    where we also used that $\eta_-\geq\eta/2$ and the assumptions on $T$ imply
    \[\P(\Vert\Delta\tilde{Z}_1\Vert\leq\eta_-)\geq \frac 1 2. \]
    This concludes the proof by noting that $\Vert \M\Vert_\star\geq \sqrt{\log(2)}\Vert\M\Vert_1,$ for any $\M\in\R^{d\times d}.$
\end{proof}
\section{Simulation Study}\label{sec: sim}
The aim of this section is to study the practical performance of the studied estimators on synthetic data, as well as the impact of the tuning parameters $b$ and $\eta$. In all of the following experiments, the analysis is based on discrete observations of the OU process generated by an Euler--Maruyama scheme with discretization step equal to $10^{-2}$. We choose this value based on the numerical analysis provided in \cite{gama19}. Unless specified otherwise, we assume to have observed all generated data.

We compare the Lasso and Slope estimators to two likelihood-based reference estimators.
\begin{enumerate}
    \item The estimator minimizing the truncated negative log-likelihood $\mathcal{L}^D_n$ (see \eqref{eq: like disc}) employed in the theoretical analysis. For simplicity we refer to this estimator as ``truncated MLE''.
    \item The estimator minimizing 
\begin{equation}
    \label{eq: like disc mart part}
    \frac{1}{T}\sum_{i=1}^n \left(\A X_{t_{i-1}}\right)^\top \Delta X^c_i  +\frac{\Delta_n}{2T}\sum_{i=1}^n \Vert \A X_{t_{i-1}}\Vert^2,\quad\textrm{where}\quad \Delta X^c_i=X^c_{t_i}-X^c_{t_{i-1}},
\end{equation} with $(X^c_t)_{t\geq 0}$ denoting the continuous martingale part of $\X,$ see Section \ref{sec: preliminaries}. Due to \eqref{eq: like disc mart part} being a natural discretization of the negative log-likelihood given continuous observations (see Section \ref{subsec: est}), we refer to this estimator as ``true MLE''.
\end{enumerate}  The latter is used purely as a reference, since the continuous martingale part is unknown given discrete observations. Following the approach of \cite{dex24}, the tuning parameters $\lambda_{\mathsf L}$ and $\lambda_{\mathsf S}$ for Lasso and Slope are chosen via cross-validation. Each observed trajectory is split into two consecutive parts: the first 80\% of the sample is used to compute the candidate estimators, while the remaining 20\% is reserved for validation. Each estimator is computed on the training segment for different values of the respective tuning parameters on a logarithmic grid between $10^{-3}$ and $10$ and evaluated by computing the corresponding truncated negative log-likelihood on the validation segment. The parameter minimizing this validation criterion is then selected and used as the final estimator.

The performance of each estimator will be assessed by the $L_1$ and $L_2$ errors with respect to the true drift matrix $\A_0$. 

In order to generate $\A_0$ which is both $s$-sparse and in $M_+(\R^d)$ we proceed in the following way: We first choose $s-d$ off-diagonal positions, which are, unless specified otherwise, filled with uniformly sampled values from the interval $[-0.5, 0.5].$ Afterwards the diagonal is filled with the sum of the absolute values of the off-diagonal entries in the corresponding row plus a small offset of $0.1$, i.e., 
\[(\A_0)_{ii} = \sum_{j \neq i} |(\A_0)_{ij}| + 0.1, \quad i = 1, \ldots, d.\]
This produces a sparse and diagonally dominant matrix, whichs ensures stability of the simulated process.
\begin{figure}
    \centering
    \includegraphics[width=1\textwidth]{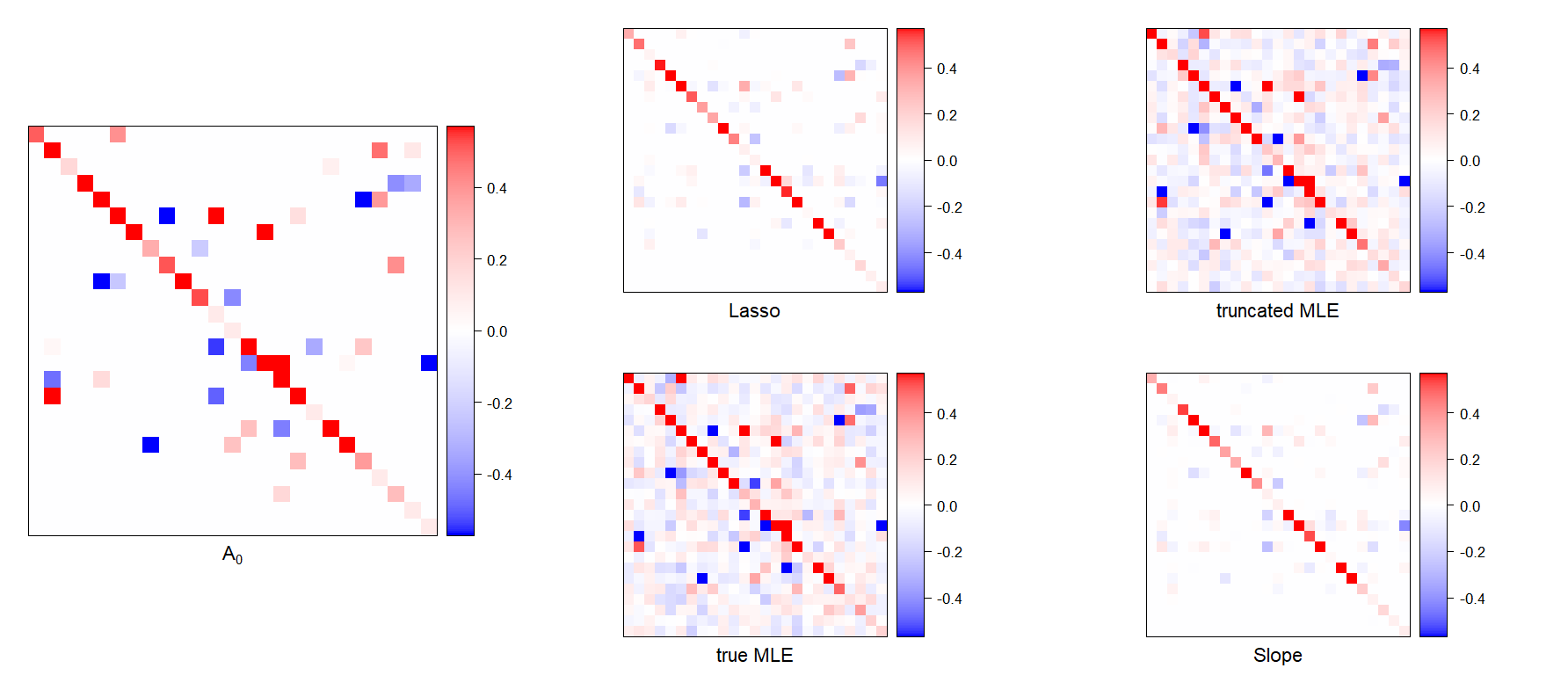}
    \caption{Comparison of the true parameter $A_0$ with Lasso and  Slope estimators, as well as true MLE and truncated MLE}
    \label{fig: heatmaps pareto int1}
\end{figure} 

Our estimators also require us to choose the set $B$ and the truncation parameter $\eta$, which are used to exclude extreme values of $\X,$ respectively its increments, from the estimation procedure. Regarding $B$ we always choose $B=B(0,b),$ where $b>0$ is a to be determined parameter. Based on the value of $b$ or $\eta$ we can compute the percentage of observations that were filtered out, see Figures \ref{fig: b pareto int10} and \ref{fig: eta laplace int10}.
We observe that the performance of the estimators stabilizes as soon as the parameters reach a certain level. Therefore, unless specified otherwise, we always choose the smallest value of $b$ and $\eta$ which filters out no more than 10\% of the observations.

\begin{figure}
\centering
\makebox[\textwidth][c]{%
    \includegraphics[width=1.1\textwidth, trim=50 0 0 0, clip]{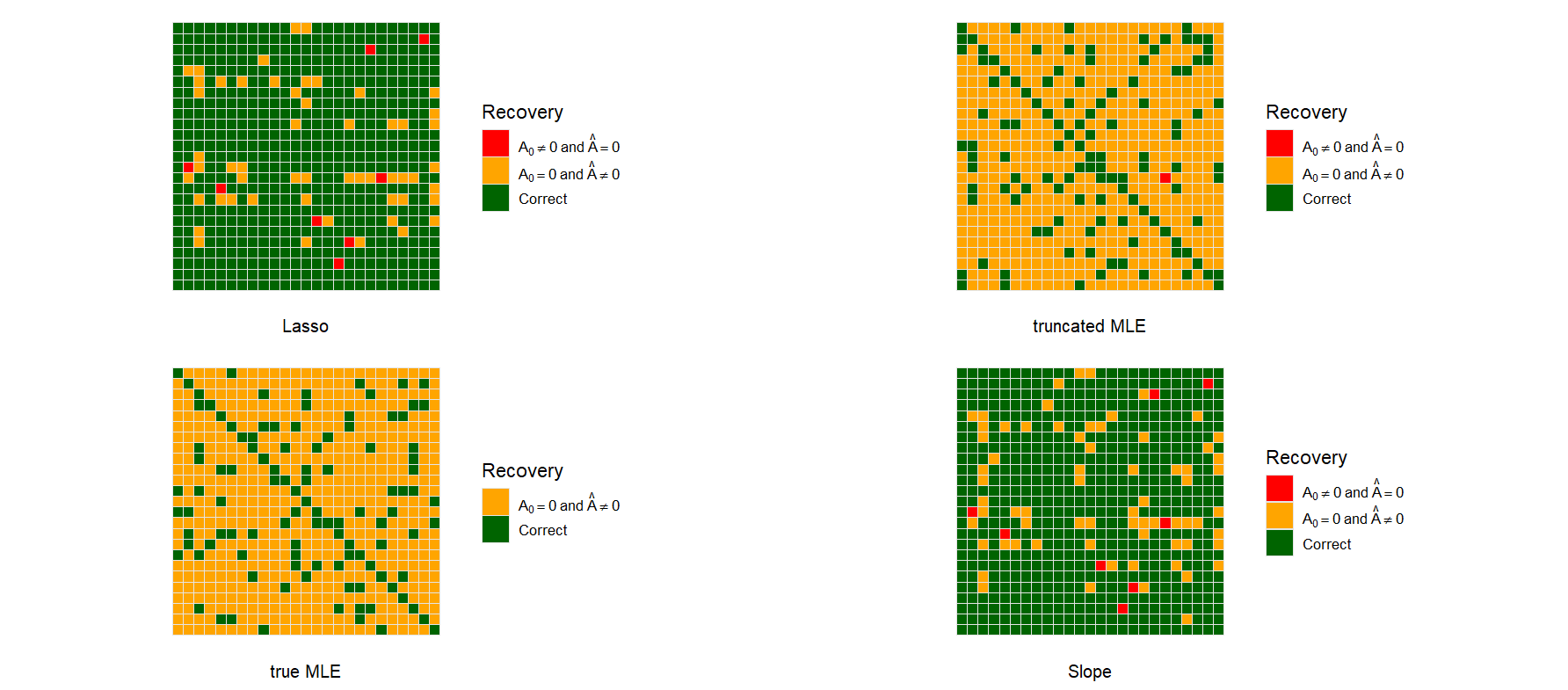}%
}
\caption{Support recovery of the investigated estimators. Green entries correspond to correctly classified coefficients of $\A_0$, meaning that the true and estimated values are either both zero or both non-zero. Red indicates entries where the estimator sets a coefficient to zero although the true value is non-zero. Orange indicates entries where a non-zero value is estimated even though the true coefficient is zero.}
\label{fig: support pareto int1}
\end{figure}

A first comparison of the different estimators can be found in Figure \ref{fig: heatmaps pareto int1}, where the drift matrix $\A_0$ and the different estimators are depicted as heat maps.
The dimension in this example is $d=25,$ the matrix $\A_0$ is $s$-sparse with $s=60$ and observations up to time $T=100$ are available. All non-zero off-diagonal entries of $\A_0$ are drawn uniformly from the interval $[-0.8, 0.8].$ 
The BDLP in this example is a sum of a standard Brownian motion and a compound Poisson process with jump intensity $1$ and Laplace-distributed jump sizes.

 We observe that the Lasso and Slope estimators recover the sparsity of the true drift matrix much better than the MLE-type estimators. The support recovery of the discussed estimators is presented in detail in Figure \ref{fig: support pareto int1}.

In order to assess the dependence of the studied estimators on the dimension $d$ in sparse settings, we compute their respective $L_1$ and $L_2$ errors for $d=10$ to $50$ while keeping the sparsity $s$ constant at $s=55$. For each value of $d$ we generate a L{\'e}vy-driven OU process with time horizon $T=100$ and repeat this procedure over $10$ iterations. The BDLP is chosen to be the same as in Figures \ref{fig: heatmaps pareto int1} and \ref{fig: support pareto int1}. The results of this study can be found in Figure \ref{fig: L1 laplace int1}. We observe that Lasso and Slope exhibit significantly smaller errors than the MLE-type estimators. Moreover, the error is almost constant over all dimensions for Lasso and Slope, which is consistent with our theoretical results. In contrast the errors of the MLE-type estimators grow as the dimension is increased and therefore fail at exploiting the sparsity of the drift matrix.

\begin{figure}
    \centering
    \includegraphics[width=0.7\textwidth]{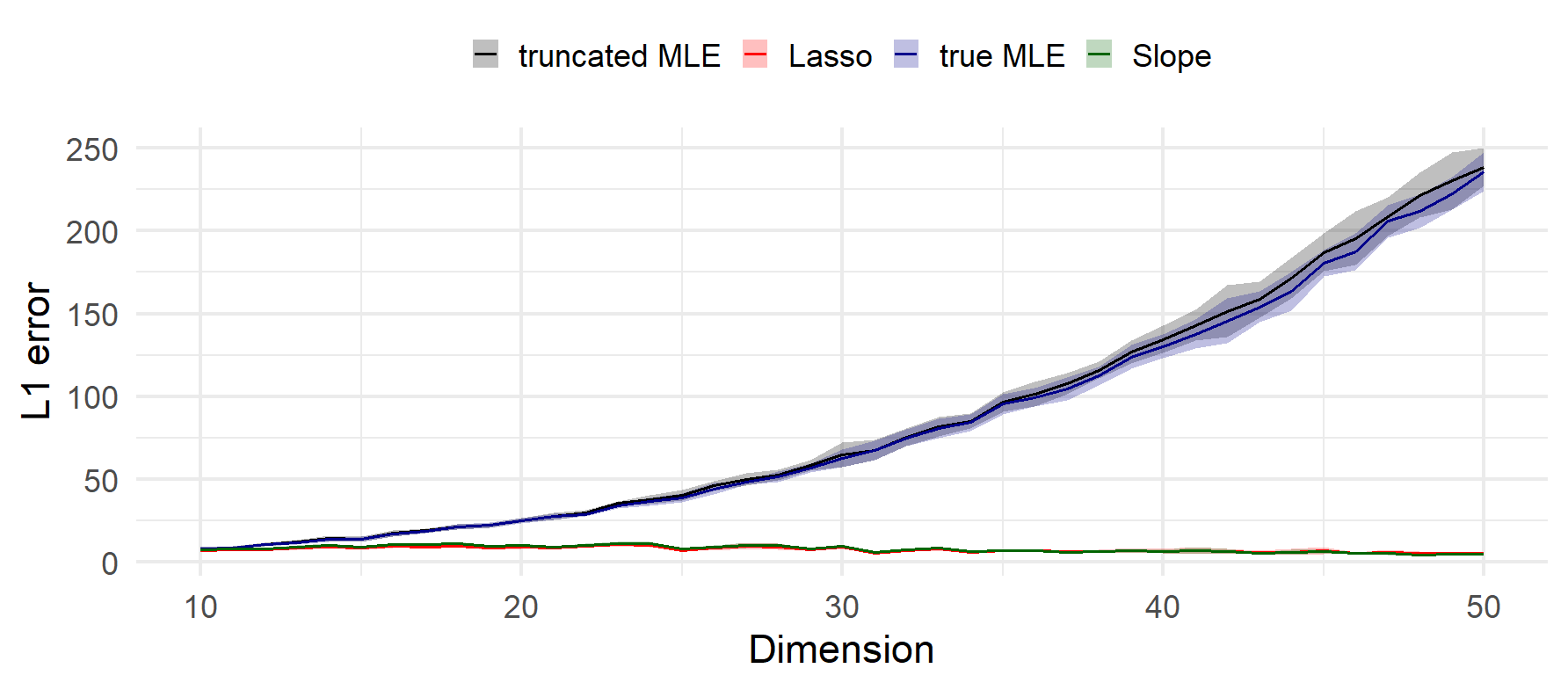}
    \\
       \includegraphics[width=0.7\textwidth]{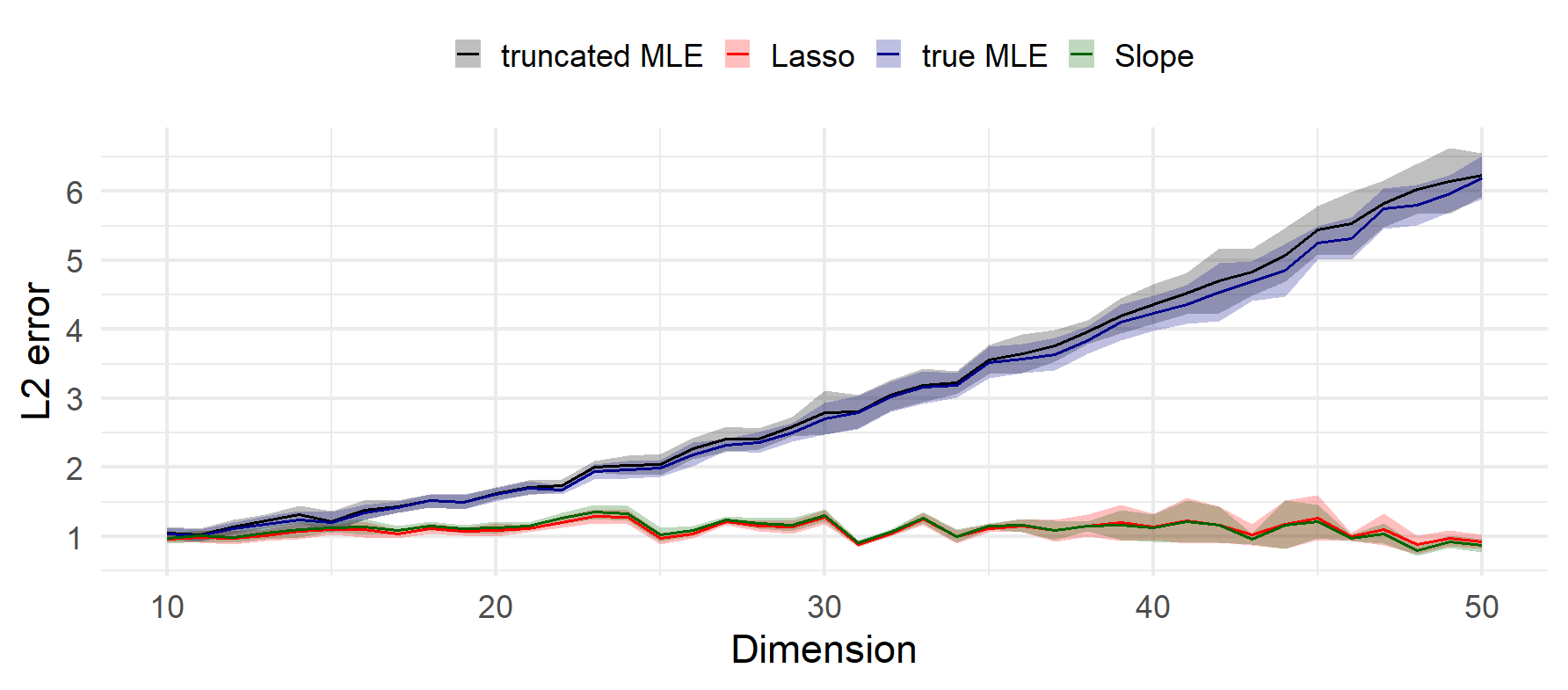}
    \caption{$L_1$ (top) and $L_2$ (bottom) errors of true and truncated MLE, Lasso and Slope $\pm$ one standard deviation.}
    \label{fig: L1 laplace int1}
\end{figure} 

 We also carried out the same experiment in a low-frequency setting, i.e. with a larger step size $\Delta_n$. More specifically,  we still simulate $\X$ on a grid with $T \cdot10^2 = 10^4$ points to ensure the precision of the Euler--Maruyama scheme, but only use $n=80$ approximately equidistant observations for the estimation. Hence the stepsize $\Delta_n = T/n$ increases from $0.01$ in the previous experiment to $1.25$. The results of this simulation can be found in Figure \ref{fig: n L1 laplace int1}. We do not observe a significant change in the behavior of Lasso and Slope estimators despite being in a low-frequency setting. A more thorough analysis of the impact of the inter-observation distance $\Delta_n$ can be found in Figure \ref{fig: n laplace int1}.
 
\begin{figure}
    \centering
    \includegraphics[width=0.7\textwidth]{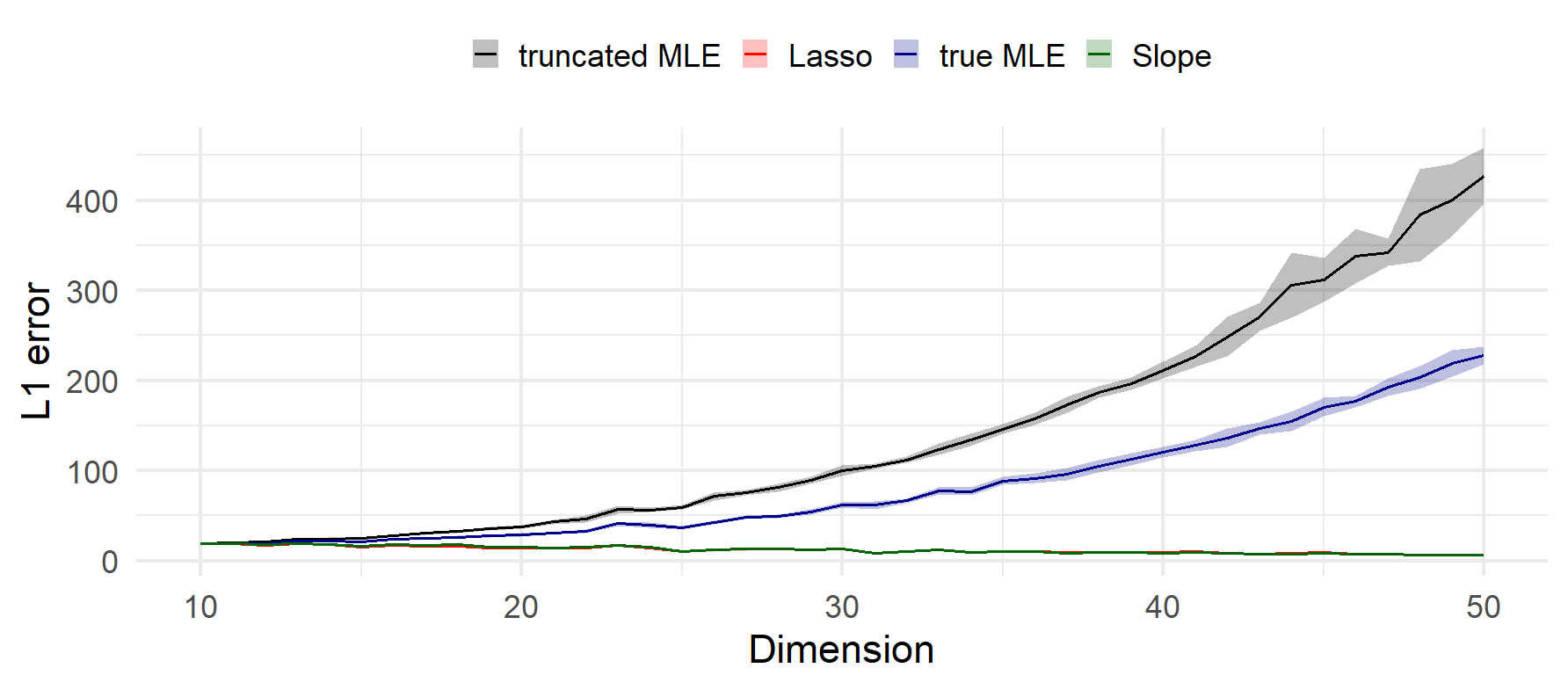}
    \\
    \includegraphics[width=0.7\textwidth]{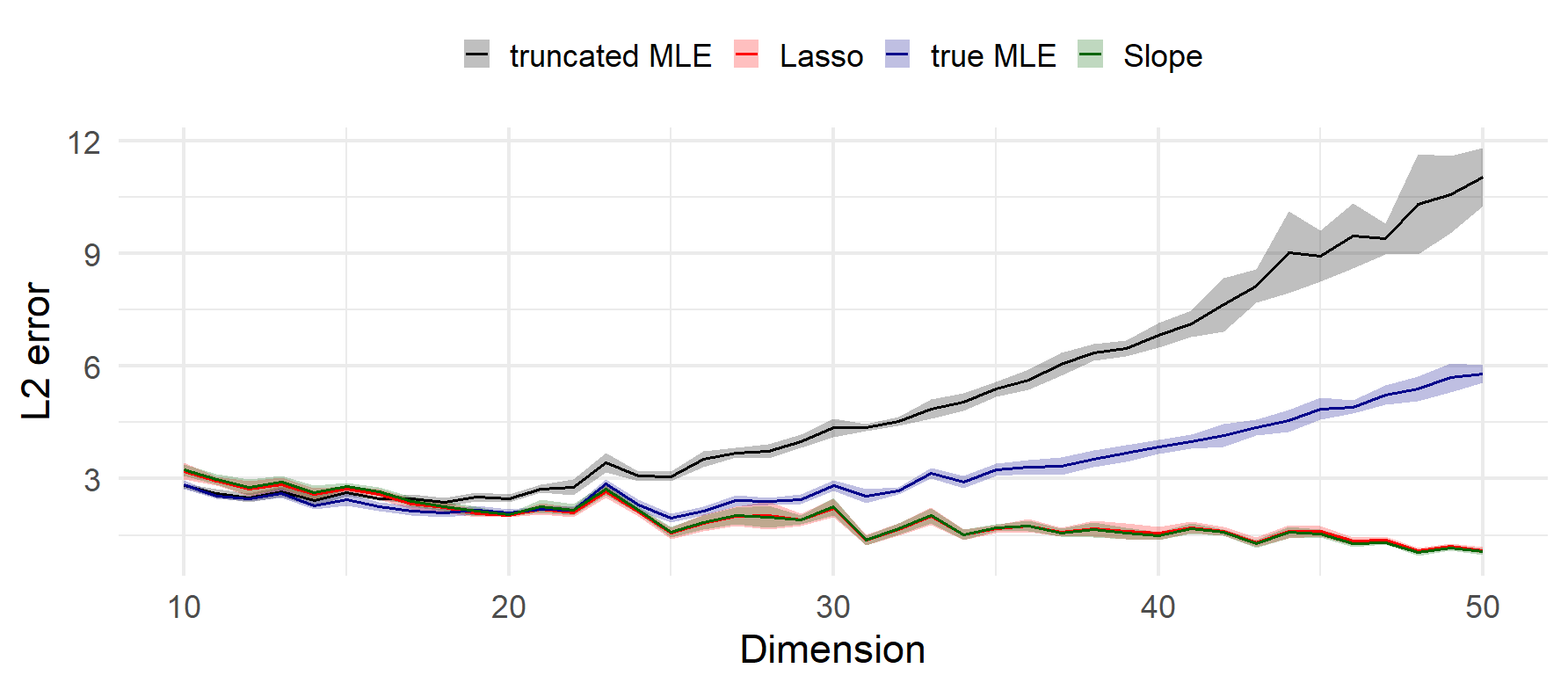}
    \caption{$L_1$ (top) and $L_2$ (bottom) errors of true and truncated MLE, Lasso and Slope $\pm$ one standard deviation. Simulations are conducted in a low-frequency setting.}\label{fig: n L1 laplace int1}  
\end{figure} 

We complement the simulation study with an additional investigation of how key model parameters affect the performance of the estimators. In particular, we analyze the impact of the truncation parameters $b$ and $\eta$, as well as the discretization level $\Delta_n$. Since Lasso and Slope estimators behaved almost identically in the previous experiments, we only conduct the experiments for the Lasso and the MLE-type estimators.

In these experiments, we expect to observe a stabilization effect. No observations are removed once $b$ and $\eta$ exceed certain levels, and the estimators' performance should therefore plateau. Similarly for sufficiently small $\Delta_n$, the discretization error becomes dominated by the stochastic error and the truncation bias, which again leads to stabilization. These theoretical expectations are confirmed by the empirical results. Moreover, the simulations allow us to examine the transitional regime prior to stabilization, illustrating how the estimators behave before reaching their limiting performance.

All subsequent simulations are conducted in dimension $d = 25$. The true drift matrix $\A_0$ is constructed to have $s=125$ non-zero entries, which corresponds to a sparsity of $20\%$. All reported results are averaged over 10 independent simulation runs.

For the truncation parameter $b$, which controls the removal of observations with large norm, Figure \ref{fig: b pareto int10} shows that the estimation errors for both Lasso and MLE decrease as $b$ increases. This behavior is expected, since larger values of $b$ allow the estimators to rely on a greater portion of the available data. The effect is considerably more pronounced for the MLE, indicating that the Lasso estimator exhibits greater robustness to extreme observations. Comparing the errors with the proportion of retained observations shows that once about $75\%$ of the data is used, the effect of the $b$-truncation becomes negligible. The BDLP in the discussed simulations was given as a sum of a standard Brownian motion and a compound Poisson process with intensity $1$ and symmetric 
Pareto-distributed jumps with parameter $4.5$. In order to focus on the impact of $b$, we choose $\eta=1000$, i.e. very large.

\begin{figure}
\centering
\includegraphics[width=.7\linewidth]{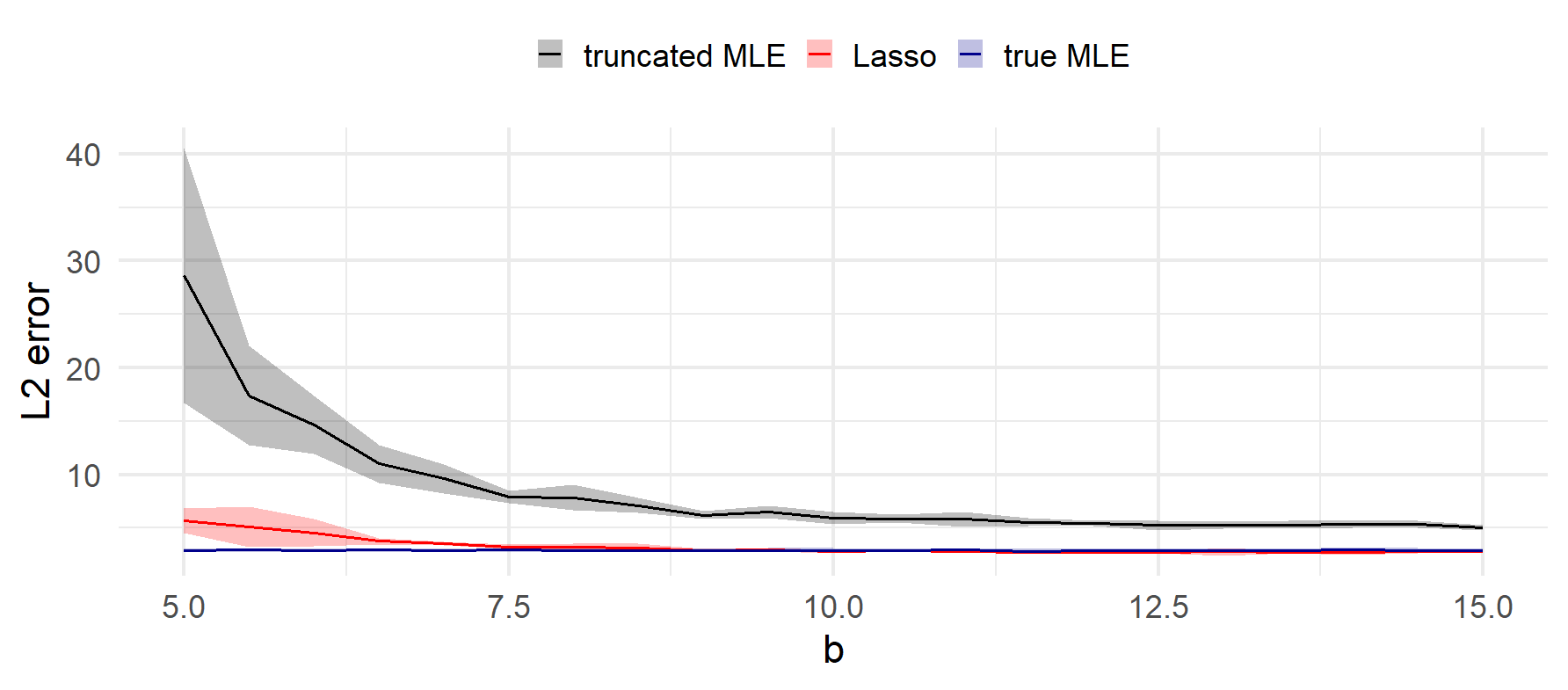}
\\
\includegraphics[width=.7\linewidth]{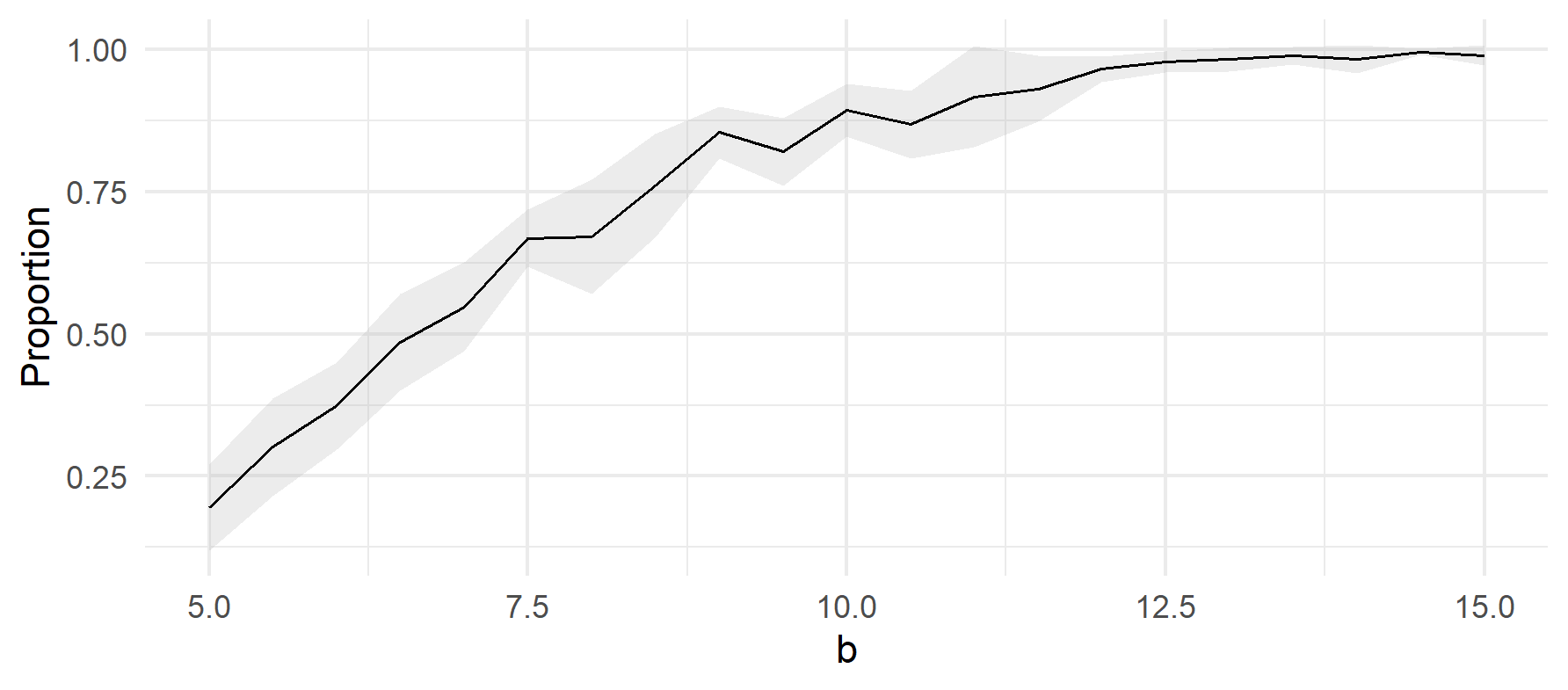}
\caption{$L_2$ errors (top) and proportion of used observations (bottom) for truncated MLE and Lasso ($\pm$ one standard deviation) for different values of $b$. The $L_2$ error of the true MLE is added as a reference in the plot on the top.}
    \label{fig: b pareto int10}
\end{figure}
We now focus on examining the impact of $\eta$, the main factor responsible for the truncation bias. We set $b=1000$ to reduce the impact of the $b$-truncation and choose the BDLP as sum of a Brownian motion plus a compound Poisson process with independent, Laplace-distributed jumps and intensity 10.
The results can be found in Figure \ref{fig: eta laplace int10} and can be summarized as follows. After an initial noisy regime, two distinct phases emerge. The first phase corresponds to using approximately 90\% of the observations (up to about $\eta = 5$), while the second phase occurs when nearly all observations are included ($\eta > 8$). In the former regime, all errors are substantially smaller, and the truncated MLE outperforms the Lasso. Its performance in particular matches the true MLE. Beyond the transition, however, the error of the truncated MLE increases significantly, whereas the error of the Lasso estimator grows only slightly. This again indicates that the Lasso is less sensitive to changes in the underlying parameter than the truncated MLE. 
\begin{figure}
\centering

\includegraphics[width=.7\linewidth]{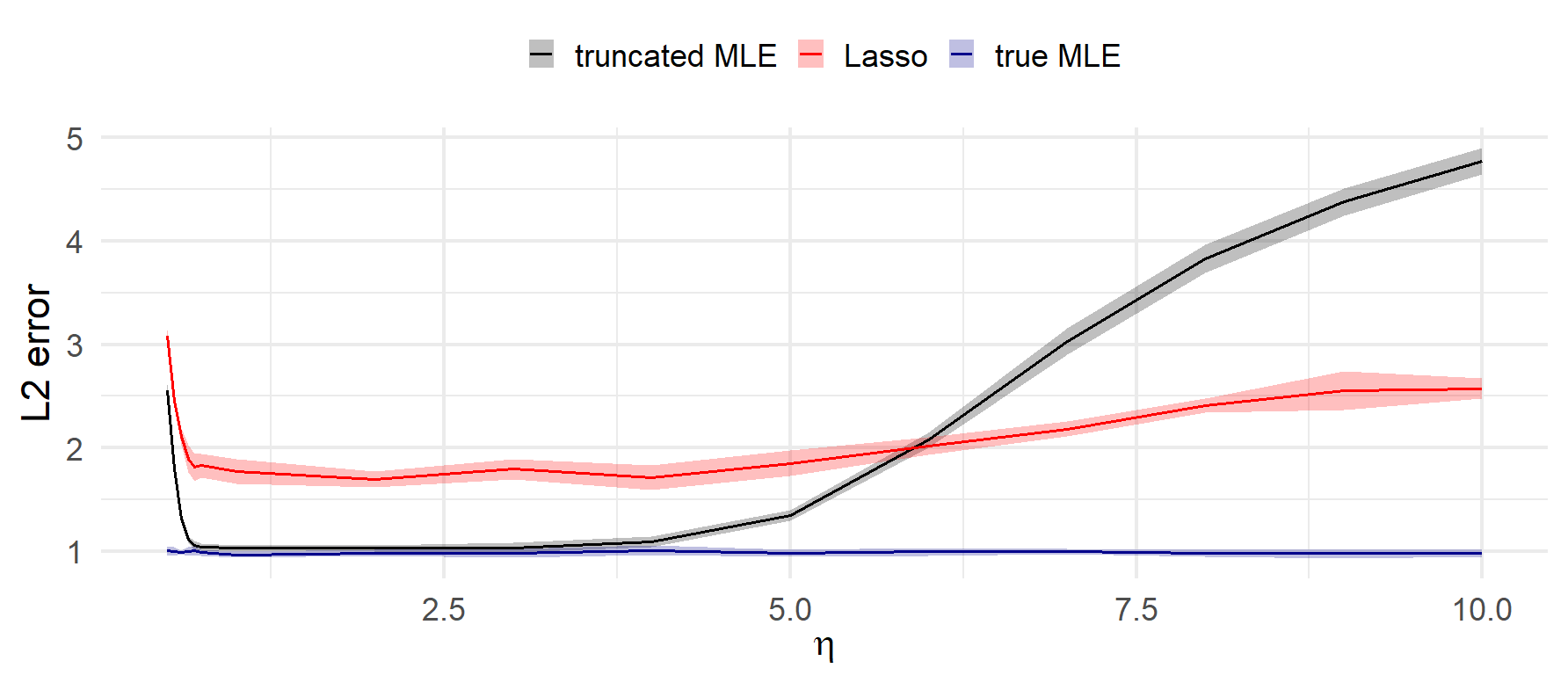}
\\
\includegraphics[width=.7\linewidth]{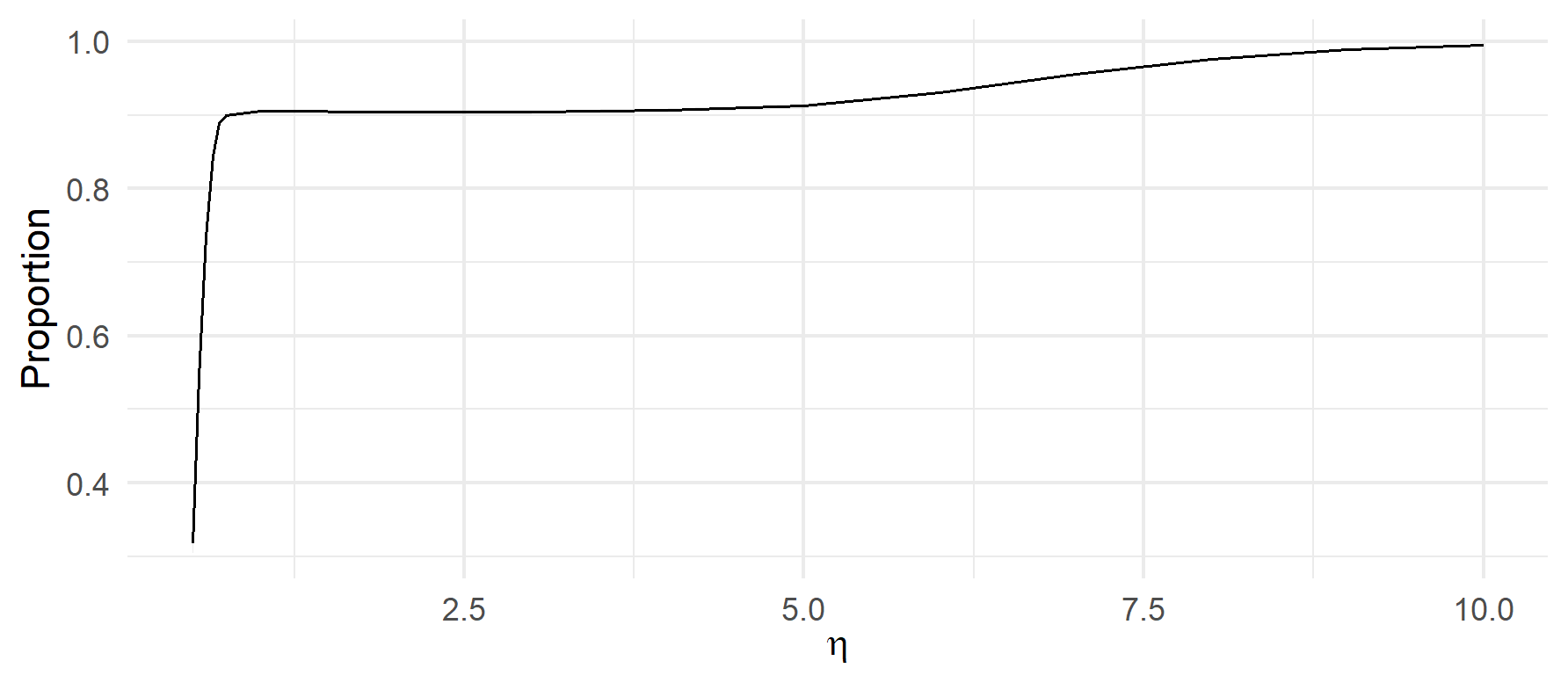}
\caption{$L_2$ errors $\pm$ one standard deviation of truncated MLE and Lasso (top) and proportion of observations used (bottom) for different values of $\eta$. As a reference the $L_2$ error of the true MLE has been added in the plot on the top.}
    \label{fig: eta laplace int10}
\end{figure}

\begin{figure}
\centering
\includegraphics[width=.7\linewidth]
{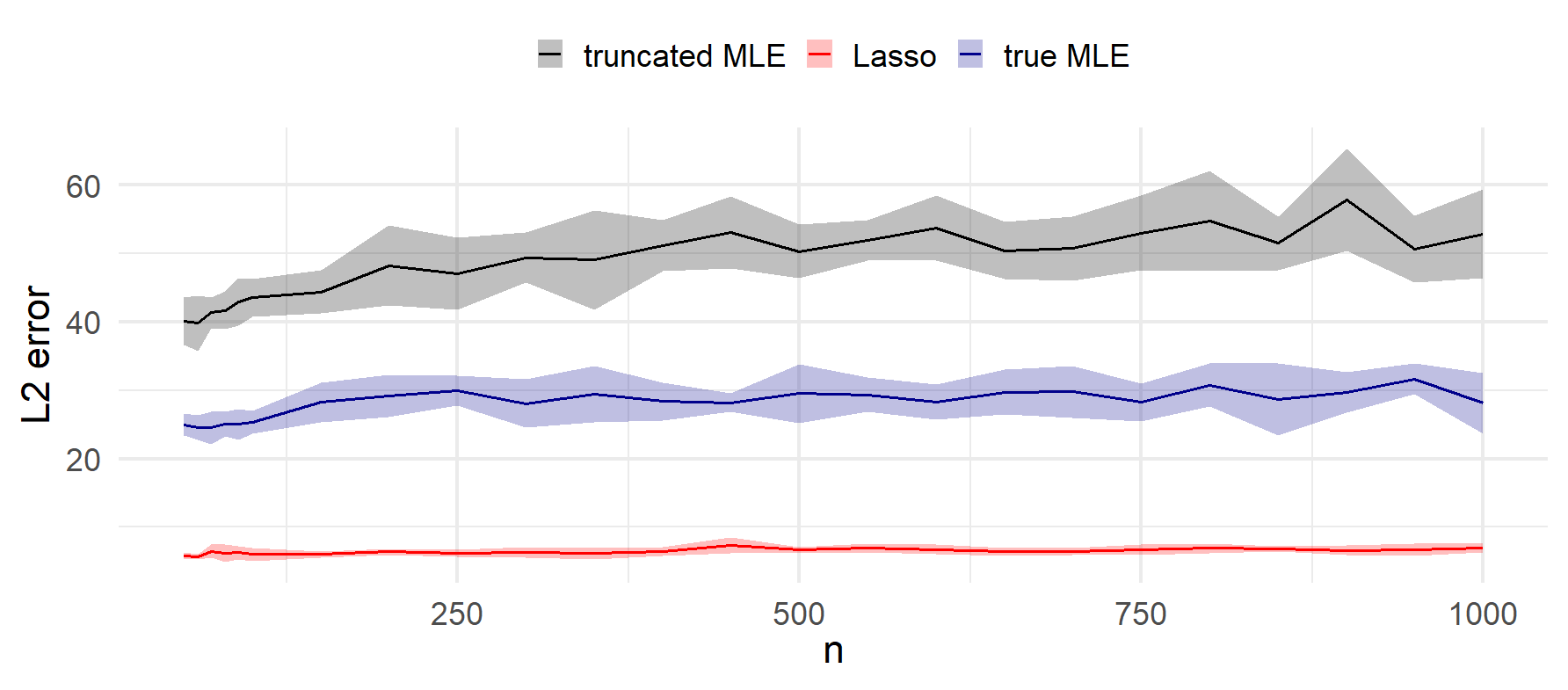}
\includegraphics[width=.7\linewidth]{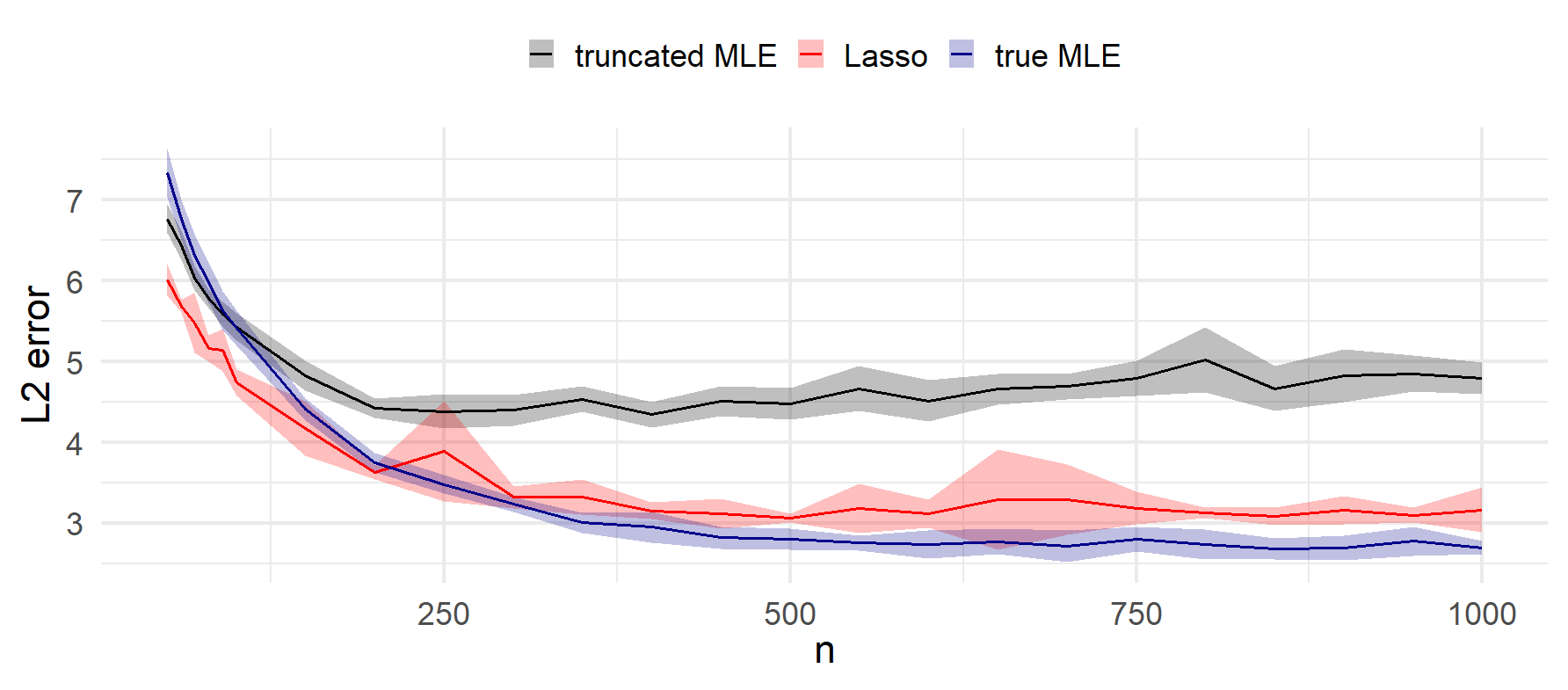}
\caption{$L_2$ errors $\pm$ one standard deviation of truncated MLE, true MLE and Lasso for different values of $n$ and $T=10$ (top) and $T=100$ (bottom).}
    \label{fig: n laplace int1}
\end{figure}
Lastly, we investigate  the impact of the inter-observation distance $\Delta_n$. For this we again choose a very liberal truncation setting of $b=\eta=1000$. As before we simulate $T\cdot 10^2$ points along the trajectory of the process, but now assume to have observed only $n$ approximately
equidistant of them. Therefore, increasing $n$ from 50 to 1000 implies higher-frequency observations and smaller $\Delta_n$. 

The BDLP is again chosen to be the sum of a standard Brownian motion and a compound Poisson process with intensity $1$ and Laplace-distributed jumps 
and we investigate the impact of $\Delta_n$ for $T=10$ and $T=100$. The results of this study can be found in Figure \ref{fig: n laplace int1}. For $T=10$ we see that the errors of both MLE and Lasso estimator stay almost constant, even for very small values of $n$ corresponding to large values of $\Delta_n$. This indicates that the stochastic error dominates in this regime as predicted by our theory. Similarly, for $T=100,$ we observe that the errors are stable as soon as $n\geq 300$ or $\Delta_n\leq 1/3$. However, for larger values of $\Delta_n$ the errors decay, which also agrees with our theoretical results. 
\section{Conclusion and Outlook}\label{sec: conclusion}
In this work we studied high-dimensional drift estimation for L{\'e}vy-driven OU processes based on discrete observations. Under sparsity assumptions on the drift matrix we showed that Lasso and Slope estimators can achieve minimax optimal results in a high frequency regime by deriving sharp nonasymptotic oracle inequalities. These bounds quantify bias, truncation error, discretization error and stochastic fluctuations. We also identified the required sample complexity depending on the tails of the BDLP's L{\'e}vy measure under very general assumptions. 

There are several possible generalizations for our work. A natural approach is extending the analysis from L{\'e}vy-driven OU processes to more general L{\'e}vy-driven diffusion processes, analogously to \cite{amorino2025sampling} for continuous diffusion processes. This requires a more in-depth analysis of the discretization error; since we focused on OU processes we could exploit the fact that their dynamics are explicitly known. Another possible extension is to the non-ergodic case. Indeed, the ergodicity assumption \ref{ass: ergodicity} requires the sparsity $s$ to be larger than the dimension $d$, which does not permit settings with extreme sparsity. For scalar OU processes the non ergodic case has for example been investigated in Chapter 3.5 of \cite{kutoyants2004statistical}. However a high-dimensional analysis needs a more refined approach; the concentration of the covariance matrix being possibly the most challenging component. One specific structural assumption for the non-ergodic case is a low-rank structure of the drift matrix. In the classical i.i.d. case this setting has been investigated by \cite{7b0bc9f7-163f-35f7-a646-75dc9ea6a85f} and possibly similar results are obtainable for OU processes. 

Since extending results from classical OU processes to L{\'e}vy-driven OU processes is closely related to generalizing statistics with sub-Gaussian noise to heavy tailed errors, another promising approach for generalizing our results is to follow work from this field. A common theme of this field is to replace the usual $L_2$ criterion in the definition of the Lasso estimator, as it penalizes the outliers occurring due to heavy-tailed noise, too strictly. One example is \cite{fan14}, which employs a $L_1$-penalized form of quantile regression, which does not require any assumptions on the tails of the noise. However, contrary to this work, in our setting it is not only the observation noise that is heavy-tailed but also the design distribution. To the best of our knowledge this has only been explored in the recent paper \cite{zhou2026minimaxoptimalrobustsparse}, of which we were unaware during the writing of this work.
\section{Acknowledgements}
This work is supported by ERC grant A2B (grant agreement no. 101124751). Part of this work
has been carried out while the authors visited the Simons Institute for the Theory of Computing
in Berkeley.
\appendix
\section{Auxiliary Results}\label{app}
\begin{lemma}\label{lemma: basic ou}
    For any time points $0 \leq t_1 < t_2 \leq T$ it holds that
    $$
    X_{t_2} = \e^{-(t_2 - t_1)\A_0}X_{t_1} + \int_{t_1}^{t_2} \e^{-(t_2 - u)\A_0}\d Z_u.
    $$
\end{lemma}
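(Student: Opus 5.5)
The identity follows from the explicit solution \eqref{eq: ou explicit} together with the semigroup property of the matrix exponential. We write \eqref{eq: ou explicit} at the two times $t_1<t_2$,
\[
X_{t_2}=\e^{-t_2\A_0}X_0+\int_0^{t_2}\e^{-(t_2-u)\A_0}\d Z_u,\qquad X_{t_1}=\e^{-t_1\A_0}X_0+\int_0^{t_1}\e^{-(t_1-u)\A_0}\d Z_u .
\]
The case $t_1=0$ is exactly \eqref{eq: ou explicit}, so we may take $t_1>0$. Since $\e^{-(t_2-t_1)\A_0}$ and $\e^{-t_1\A_0}$ are powers of the same matrix, they commute and $\e^{-(t_2-t_1)\A_0}\e^{-t_1\A_0}=\e^{-t_2\A_0}$. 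Multiplying the second identity by the deterministic matrix $\e^{-(t_2-t_1)\A_0}$ therefore gives
\[
\e^{-(t_2-t_1)\A_0}X_{t_1}=\e^{-t_2\A_0}X_0+\int_0^{t_1}\e^{-(t_2-u)\A_0}\d Z_u .
\]
Pulling the constant matrix inside the stochastic integral is justified by linearity of the stochastic integral with respect to deterministic matrix-valued integrands.

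Subtracting this from the expression for $X_{t_2}$ cancels the initial-value terms. It leaves
\[
\int_0^{t_2}\e^{-(t_2-u)\A_0}\d Z_u-\int_0^{t_1}\e^{-(t_2-u)\A_0}\d Z_u=\int_{t_1}^{t_2}\e^{-(t_2-u)\A_0}\d Z_u ,
\]
by additivity of the stochastic integral over adjacent intervals. The integrand is deterministic, continuous and bounded in $u$, and $\Z$ is a semimartingale, so this additivity holds. This yields the claim.

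No step is a genuine obstacle. The only points needing care are the justifications already noted: that the matrix exponentials commute, and that a constant matrix may be moved inside the integral. If one prefers to avoid \eqref{eq: ou explicit} entirely, an alternative is to apply It\^o's product formula to $\e^{u\A_0}X_u$ on $[t_1,t_2]$. This gives $\d(\e^{u\A_0}X_u)=\e^{u\A_0}\d Z_u$, since the drift terms cancel and there is no covariation term because $\e^{u\A_0}$ is of finite variation. Integrating from $t_1$ to $t_2$ and multiplying by $\e^{-t_2\A_0}$ gives the same identity.
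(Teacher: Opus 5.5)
Your proof is correct and is essentially the paper's argument. The paper solves the explicit formula at $t_1$ for $X_0$, using invertibility of $\e^{t_1\A_0}$, and substitutes this into the formula at $t_2$; you instead multiply the $t_1$ identity by $\e^{-(t_2-t_1)\A_0}$ and subtract, which is the same computation and rests on the same facts (moving a deterministic matrix inside the stochastic integral, and additivity over adjacent intervals), which you state explicitly while the paper leaves them implicit.
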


\begin{proof}
    Recall the explicit form of the solution to the Ornstein--Uhlenbeck equation \eqref{eq: ou explicit} from which it follows that
    $$
    X_0 = \e^{t_1 \A_0} \left( X_{t_1} - \int_0^{t_1} \e^{-(t_1 - u)\A_0} \d Z_u \right).
    $$
    Substituting the above again to the formula \eqref{eq: ou explicit} for the solution but now at point $t_2$, we obtain
    \begin{align*}
    X_{t_2}&=\e^{-{t_2}\A_0}\e^{t_1 \A_0}\left( X_{t_1} - \int_0^{t_1} \e^{-(t_1 - u)\A_0} \d Z_u \right) +\int_0^{t_2}\e^{-(t_2-u)\A_0}\d Z_u \\
    &= \e^{-(t_2 - t_1)\A_0}X_{t_1} + \int_{t_1}^{t_2} \e^{-(t_2 - u)\A_0}\d Z_u.
    \end{align*}
\end{proof}
\begin{lemma}\label{lemma: ou jumps, levy jumps}
    For any $\eta >0$ it holds 
    $$
\Big\{\Vert \Delta \tilde{Z}_i\Vert< \eta_-\Big\}  \cap\{X_{t_{i-1}}\in B \}\subseteq    \{\Vert \Delta X_i \Vert< \eta\}\cap\{X_{t_{i-1}}\in B \} \subseteq  \Big\{\Vert \Delta \tilde{Z}_i\Vert< \eta_+\Big\}  \cap\{X_{t_{i-1}}\in B \} $$
    and
\begin{align*}
  \{\Vert \Delta X_i \Vert\geq \eta\}\cap\{X_{t_{i-1}}\in B \}
  \subseteq \Big\{\Vert \Delta \tilde{Z}_i\Vert\geq \eta_-\Big\},
\end{align*}
where
\begin{align}    \label{eq: eta+ eta-}\eta_{+}&\coloneq\eta+b(\exp(\Delta_n\Vert \A_0\Vert_{\operatorname{op}})-1),\\
    \eta_{-}&\coloneq  \eta-b(\exp(\Delta_n\Vert \A_0\Vert_{\operatorname{op}})-1).\notag
\end{align}
\end{lemma}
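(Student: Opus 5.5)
The plan is to use the one-step representation from Lemma \ref{lemma: basic ou} and then apply the triangle inequality in both directions. With $t_1=t_{i-1}$ and $t_2=t_i$, Lemma \ref{lemma: basic ou} gives
\[
\Delta X_i=(\e^{-\Delta_n\A_0}-\mathbb{I}_d)X_{t_{i-1}}+\Delta\tilde{Z}_i .
\]
The whole argument therefore reduces to a deterministic bound on the drift part $(\e^{-\Delta_n\A_0}-\mathbb{I}_d)X_{t_{i-1}}$ on the event $\{X_{t_{i-1}}\in B\}$.

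\textbf{Drift bound.} On $\{X_{t_{i-1}}\in B\}$ we have $\Vert X_{t_{i-1}}\Vert\le b$. I am reading $b=\operatorname{rad}(B)$ as a bound on $\sup_{x\in B}\Vert x\Vert$, which holds for balls centred at $0$ as in the canonical choice. Expanding the exponential series and using submultiplicativity of the operator norm gives
\[
\Vert \e^{-\Delta_n\A_0}-\mathbb{I}_d\Vert\le \sum_{k\ge1}\frac{(\Delta_n\Vert\A_0\Vert)^k}{k!}=\exp(\Delta_n\Vert\A_0\Vert)-1 .
\]
Hence $\Vert(\e^{-\Delta_n\A_0}-\mathbb{I}_d)X_{t_{i-1}}\Vert\le b(\exp(\Delta_n\Vert\A_0\Vert)-1)\eqqcolon\delta$. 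By definition, $\eta_\pm=\eta\pm\delta$.

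\textbf{Both inclusions.} Work on $\{X_{t_{i-1}}\in B\}$.
\begin{itemize}
\item If $\Vert\Delta\tilde{Z}_i\Vert<\eta_-$, then $\Vert\Delta X_i\Vert\le \Vert\Delta\tilde Z_i\Vert+\delta<\eta_-+\delta=\eta$. This gives the first inclusion.
\item If $\Vert\Delta X_i\Vert<\eta$, then $\Vert\Delta\tilde{Z}_i\Vert\le\Vert\Delta X_i\Vert+\delta<\eta+\delta=\eta_+$. This gives the second inclusion.
\end{itemize}
The intersection with $\{X_{t_{i-1}}\in B\}$ is carried along unchanged in both cases.

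\textbf{Complement statement.} This is the contrapositive of the first inclusion restricted to $\{X_{t_{i-1}}\in B\}$. If $\Vert\Delta X_i\Vert\ge\eta$ and $X_{t_{i-1}}\in B$, then $\Vert\Delta\tilde Z_i\Vert<\eta_-$ is impossible, so $\Vert\Delta\tilde Z_i\Vert\ge\eta_-$.

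The only step needing care is the drift bound, specifically the interpretation of $b$ as a norm bound on $B$. Nothing else is delicate. Positivity of $\eta_-$ is not needed for the inclusions, since they hold trivially when $\eta_-\le0$.
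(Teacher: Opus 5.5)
Your proposal is correct and is essentially the paper's proof. Both write $\Delta X_i=(\e^{-\Delta_n\A_0}-\mathbb{I}_d)X_{t_{i-1}}+\Delta\tilde{Z}_i$ via Lemma \ref{lemma: basic ou}, bound the drift part by $b(\exp(\Delta_n\Vert\A_0\Vert)-1)$ on $\{X_{t_{i-1}}\in B\}$ (reading $b$ as $\sup_{x\in B}\Vert x\Vert$, as the paper does), and apply the triangle inequality. The paper writes out only the inclusion into $\{\Vert\Delta\tilde{Z}_i\Vert<\eta_+\}$ and calls the rest analogous, whereas you spell out all three inclusions, including the contrapositive for the complement statement.
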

\begin{proof}
Note that by Lemma \ref{lemma: basic ou} it holds
\begin{align*}
    \Vert \Delta X_{i}\Vert &=\Vert X_{t_{i}}-X_{t_{i-1}}\Vert
    \\
    &=\Vert (\exp(-\Delta_n\A_0)-\mathbb{I}_d)X_{t_{i-1}}+\int_{t_{i-1}}^{t_{i}}\exp(-(t_{i}-u))\A_0\d Z_u\Vert
    \\
    &\geq \Vert \int_{t_{i-1}}^{t_{i}}\exp(-(t_{i}-u)\A_0)\d Z_u\Vert-\Vert (\exp(-\Delta_n\A_0)-\mathbb{I}_d)X_{t_{i-1}}\Vert
    \\
    &=\Vert \Delta \tilde{Z}_i\Vert-\Vert (\exp(-\Delta_n\A_0)-\mathbb{I}_d)X_{t_{i-1}}\Vert
\end{align*}
This implies 
\begin{align*}
  &\{\Vert \Delta X_i \Vert< \eta\}\cap\{X_{t_{i-1}}\in B \}
  \\
  &\subseteq \Big\{\Vert \Delta \tilde{Z}_i\Vert< \eta+\Vert (\exp(-\Delta_n\A_0)-\mathbb{I}_d)X_{t_{i-1}}\Vert\Big\}\cap\{X_{t_{i-1}}\in B \}
    \\
  &\subseteq \Big\{\Vert \Delta \tilde{Z}_i\Vert< \eta+b\Vert (\exp(-\Delta_n\A_0)-\mathbb{I}_d)\Vert_{\mathrm{op}}\Big\}\cap\{X_{t_{i-1}}\in B \}
      \\
  &\subseteq \Big\{\Vert \Delta \tilde{Z}_i\Vert< \eta+b(\exp(\Delta_n\Vert \A_0\Vert_{\operatorname{op}})-1)\Big\}\cap\{X_{t_{i-1}}\in B \}.
\end{align*}
The other set inclusions follow by analogous arguments.
\end{proof}
\begin{lemma}\label{lemma: ew z}
For any $i\in[n]$ it holds  
\[\lambda_{\max}\Big(\E[(\Delta \tilde{Z}_i)(\Delta \tilde{Z}_i)^\top \vert \mathcal{F}_{t_{i-1}}]\Big)\leq \Delta_n \lambda_{\max}(\C+\boldsymbol{\nu}_2)\exp(\Delta_n\Vert\A_0\Vert).\]
\end{lemma}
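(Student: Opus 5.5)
The plan is to compute the conditional second moment of $\Delta\tilde Z_i$ exactly, via independence of increments and the It\^o isometry for L\'evy integrals, and then to bound the eigenvalues of the resulting integral. Since $\Delta\tilde Z_i=\int_{t_{i-1}}^{t_i}\e^{-(t_i-u)\A_0}\d Z_u$ depends only on the increments of $\Z$ on $(t_{i-1},t_i]$, it is independent of $\mathcal F_{t_{i-1}}$. Hence the conditional expectation equals the unconditional one, which by stationarity of the increments does not depend on $i$.

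The first step is to identify the covariance. By \ref{ass: ergodicity}, $\Z$ is a square-integrable martingale, so $b$ is compensated and $Z_t=\bSigma W_t+\int_0^t\int z\,\tilde N(\md s,\md z)$. The It\^o isometry for the Brownian part and for the compensated Poisson integral, together with independence of the two parts, then gives
\[
\E[\Delta\tilde Z_i(\Delta\tilde Z_i)^\top]=\int_0^{\Delta_n}\e^{-s\A_0}(\C+\boldsymbol{\nu}_2)\e^{-s\A_0^\top}\d s .
\]

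The second step is the eigenvalue bound. For a unit vector $v$ I bound
\[
v^\top \e^{-s\A_0}(\C+\boldsymbol{\nu}_2)\e^{-s\A_0^\top}v\le\lambda_{\max}(\C+\boldsymbol{\nu}_2)\Vert \e^{-s\A_0^\top}v\Vert^2 .
\]
Taking the supremum over $v$ after integrating reduces the claim to the scalar estimate
\[
\int_0^{\Delta_n}\Vert\e^{-s\A_0}\Vert^2\d s\le \Delta_n\exp(\Delta_n\Vert\A_0\Vert).
\]

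This scalar estimate is the main obstacle. The crude bound $\Vert \e^{-s\A_0}\Vert\le \e^{s\Vert\A_0\Vert}$ only yields $(\e^{2x}-1)/(2x)\cdot\Delta_n$ with $x=\Delta_n\Vert\A_0\Vert$. Writing this as $\e^x\sinh(x)/x\cdot\Delta_n$ shows it is at least $\e^{x}\Delta_n$, so the stated constant cannot come from that route. I would instead use the logarithmic norm. Define
\[
\mu\coloneq\lambda_{\max}\big(-(\A_0+\A_0^\top)/2\big).
\]
Then $\frac{\md}{\md s}\Vert\e^{-s\A_0}v\Vert^2=-2\,(\e^{-s\A_0}v)^\top\A_0\,\e^{-s\A_0}v\le 2\mu\Vert\e^{-s\A_0}v\Vert^2$, and Gr\"onwall gives $\Vert\e^{-s\A_0}\Vert^2\le \e^{2s\mu}$. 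The target therefore follows whenever $2\mu\le\Vert\A_0\Vert$. It holds in particular when the symmetric part of $\A_0$ is positive semidefinite, which is the natural situation for $\A_0\in M_+(\R^d)$ of the diagonally dominant type used in the simulations. In that case one even gets the sharper bound $\Delta_n\lambda_{\max}(\C+\boldsymbol{\nu}_2)$.

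For a general non-normal $\A_0\in M_+(\R^d)$ I would try an alternative route that splits the exponent symmetrically. The idea is to write $\e^{-s\A_0}(\C+\boldsymbol{\nu}_2)\e^{-s\A_0^\top}$ and control only one factor by $\e^{s\Vert\A_0\Vert}$ while absorbing the other via the same Gr\"onwall argument. If neither $2\mu\le\Vert\A_0\Vert$ nor this splitting closes the gap, I would state the lemma under that extra condition rather than change the constant.
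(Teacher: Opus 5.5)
Your covariance computation via independence and the It\^o isometry, and your reduction to the scalar estimate $\int_0^{\Delta_n}\Vert\e^{-s\A_0}\Vert^2\d s\le\Delta_n\exp(\Delta_n\Vert\A_0\Vert)$, follow exactly the paper's route. Your diagnosis of the last step is also correct. The paper's proof ends with $\max_{\Vert u\Vert=1}\int_0^{\Delta_n}\Vert\e^{-(\Delta_n-s)\A_0^\top}u\Vert^2\d s\le\Delta_n\exp(\Delta_n\Vert\A_0\Vert)$ and gives no justification. The available bound $\Vert\e^{-r\A_0^\top}\Vert\le\e^{r\Vert\A_0\Vert}$ only gives $\Delta_n\e^{x}\sinh(x)/x\ge\Delta_n\e^{x}$ with $x=\Delta_n\Vert\A_0\Vert$. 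Your Gr\"onwall bound $\Vert\e^{-s\A_0}\Vert^2\le\e^{2s\mu}$ is correct and proves the lemma when $2\mu\le\Vert\A_0\Vert$. The real gap is the general case. Neither your splitting idea nor any other argument can close it, because the inequality fails for some $\A_0$ allowed by \ref{ass: ergodicity}.

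Here is a counterexample. Take $\Z$ a standard Brownian motion ($\C=\mathbb{I}_d$, $\nu=0$), let $\mathbf{J}$ be the nilpotent shift with entries $\mathbf{J}_{j,j+1}=1$, and set $\A_0=\epsilon\mathbb{I}_d+\Delta_n^{-1}\mathbf{J}$ with $\epsilon>0$. Then $\A_0\in M_+(\R^d)$ and $\Vert\A_0\Vert\le\epsilon+\Delta_n^{-1}$, so the claimed bound is at most $\Delta_n\e^{1+\epsilon\Delta_n}$. For the unit vector $v_j=(-1)^j/\sqrt{d}$ one has $v^\top(\mathbf{J}^\top)^kv=(-1)^k(1-k/d)$ for $k<d$. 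Hence
\[
v^\top\e^{-u\mathbf{J}^\top}v=\sum_{k=0}^{d-1}\frac{u^k}{k!}\Big(1-\frac{k}{d}\Big)\ge\e^{u}\Big(1-\frac{u}{d}\Big)-\sum_{k\ge d}\frac{u^k}{k!},
\]
and by Cauchy--Schwarz
\[
\lambda_{\max}\Big(\E\big[\Delta\tilde{Z}_i(\Delta\tilde{Z}_i)^\top\big]\Big)\ge\int_0^{\Delta_n}\big(v^\top\e^{-s\A_0^\top}v\big)^2\d s=\Delta_n\int_0^1\e^{-2\epsilon\Delta_n u}\big(v^\top\e^{-u\mathbf{J}^\top}v\big)^2\d u .
\]
As $d\to\infty$ and $\epsilon\to0$, this tends to $\Delta_n(\e^2-1)/2\approx3.19\,\Delta_n$, while the claimed bound tends to $\Delta_n\e\approx2.72\,\Delta_n$. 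Already $d=20$ and $\epsilon\Delta_n=0.01$ give a left side of at least $2.92\,\Delta_n$ against a bound of at most $2.75\,\Delta_n$. Consistently, here $\mu=\Delta_n^{-1}\cos(\pi/(d+1))-\epsilon$ is close to $\Vert\A_0\Vert$.

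So do not add a hypothesis that \ref{ass: ergodicity} does not provide; change the constant instead. The crude bound directly gives $\Delta_n\lambda_{\max}(\C+\boldsymbol{\nu}_2)\exp(2\Delta_n\Vert\A_0\Vert)$, which is also the factor Lemma \ref{lemma:prob bound} already uses. Alternatively, keep your sharper $\exp(2\Delta_n\max\{\mu,0\})$. Downstream quantities such as $\gamma(\Delta_n)$ change only by this factor. Separately, the row-diagonally-dominant matrices used in the simulations need not have a positive semidefinite symmetric part, since a single column can collect several large entries.
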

\begin{proof}
    Combining the Courant--Fischer theorem with It{\^o}'s isometry gives, since the $\Delta \tilde{Z}_i,$ are i.i.d.\ and $\Delta \tilde{Z}_i$ is independent of $\mathcal{F}_{t_{i-1}}$
    \begin{align*}
        &\lambda_{\max}\Big(\E[(\Delta \tilde{Z}_i)(\Delta \tilde{Z}_i)^\top \vert \mathcal{F}_{t_{i-1}}]\Big)
        \\
        &=\max_{u\in\R^d:\Vert u\Vert=1}\E[(u^\top\Delta \tilde{Z}_1)^2]
        \\
        &=\max_{u\in\R^d:\Vert u\Vert=1}\Bigg(\int_0^{\Delta_n}\Vert u^\top\exp(-(\Delta_n-s)\A_0)\bSigma\Vert^2\d s +\int_0^{\Delta_n}\int_{\R^d}\Vert u^\top\exp(-(\Delta_n-s)\A_0)z\Vert^2\nu(\md z)\d s\Bigg)
          \\
        &=\max_{u\in\R^d:\Vert u\Vert=1}u^\top\Bigg(\int_0^{\Delta_n}\exp(-(\Delta_n-s)\A_0)\Big(\C+\int_{\R^d}zz^\top\nu(\md z)\Big)\exp(-(\Delta_n-s)\A_0^\top)\d s \Bigg)u
                  \\
        &\leq \lambda_{\max}(\C+\boldsymbol{\nu}_2)\max_{u\in\R^d:\Vert u\Vert=1}\int_0^{\Delta_n}\Vert\exp(-(\Delta_n-s)\A_0^\top)u\Vert ^2\d s 
                   \\
        &\leq\Delta_n \lambda_{\max}(\C+\boldsymbol{\nu}_2)\exp(\Delta_n\Vert\A_0\Vert),
    \end{align*}
    which concludes the proof.
\end{proof}

\begin{proposition}\label{prop: mix}
Assume that the L{\'e}vy triplet of the BDLP $\Z$ is given as $(0,\C,0).$ Then for $T\geq \lambda_{\max}(\C_\infty)\lambda_{\min}(\C)^{-1}\log(2d \kappa(\C_{\infty})^2\kappa(\C)),$ it holds 

 \begin{align*}
        &\beta_{\X}(T)
        \leq \frac{3}{2}d\kappa(\C_{\infty})^{2}\kappa(\C)   \exp\left(-\frac{T\lambda_{\min}(\C)}{\lambda_{\max}(\C_\infty)} \right)
        +\frac{1}{\sqrt{2}} d\kappa(\C_{\infty}) \exp\left(-\frac{T\lambda_{\min}(\C)}{2\lambda_{\max}(\C_\infty)} \right).
    \end{align*}
\end{proposition}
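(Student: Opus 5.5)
The plan is to bound the $\beta$-mixing coefficient of the Gaussian stationary OU process through the total variation distance between Gaussian laws. Here $\Z$ is a Brownian motion with covariance $\C$. For a stationary Markov process,
\[
\beta_{\X}(T)=\int \|P_T(x,\cdot)-\mu\|_{TV}\,\mu(\md x).
\]
By Lemma \ref{lemma: basic ou}, $P_T(x,\cdot)=\mathcal N(\e^{-T\A_0}x,\C_T)$ with $\C_T=\int_0^T\e^{-s\A_0}\C\e^{-s\A_0^\top}\d s$, while $\mu=\mathcal N(0,\C_\infty)$. The difference $\C_\infty-\C_T=\e^{-T\A_0}\C_\infty\e^{-T\A_0^\top}$ is positive semidefinite. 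I will split the total variation via the triangle inequality through the intermediate law $\mathcal N(0,\C_T)$: a mean-shift term $\|\mathcal N(\e^{-T\A_0}x,\C_T)-\mathcal N(0,\C_T)\|_{TV}$ and a covariance term $\|\mathcal N(0,\C_T)-\mathcal N(0,\C_\infty)\|_{TV}$.

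The key ingredient is a dimension-friendly decay bound for $\e^{-t\A_0}$ that avoids $\|\A_0\|$. I will use the Lyapunov equation $\A_0\C_\infty+\C_\infty\A_0^\top=\C$. Differentiating $t\mapsto v^\top\e^{-t\A_0}\C_\infty\e^{-t\A_0^\top}v$ gives $-v^\top\e^{-t\A_0}\C\e^{-t\A_0^\top}v\le-\frac{\lambda_{\min}(\C)}{\lambda_{\max}(\C_\infty)}\, v^\top\e^{-t\A_0}\C_\infty\e^{-t\A_0^\top}v$. Grönwall then yields
\[
\e^{-T\A_0}\C_\infty\e^{-T\A_0^\top}\preceq \exp\!\Big(-\frac{T\lambda_{\min}(\C)}{\lambda_{\max}(\C_\infty)}\Big)\C_\infty,
\qquad
\|\e^{-T\A_0}\|^2\le\kappa(\C_\infty)\exp(-T\lambda_{\min}(\C)/\lambda_{\max}(\C_\infty)).
\]
Call $r_T\coloneq\exp(-T\lambda_{\min}(\C)/\lambda_{\max}(\C_\infty))$.

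For the mean term, Pinsker with the KL divergence between Gaussians with equal covariance gives $\frac12\sqrt{x^\top\e^{-T\A_0^\top}\C_T^{-1}\e^{-T\A_0}x}$. Jensen then bounds the $\mu$-average by $\frac12\sqrt{\tr(\C_T^{-1}\e^{-T\A_0}\C_\infty\e^{-T\A_0^\top})}$. The hypothesis on $T$ ensures $r_T\kappa(\C_\infty)\le 1/2$, so $\C_T\succeq\C_\infty/2$ in a crude sense, giving $\lambda_{\min}(\C_T)\ge\lambda_{\min}(\C_\infty)/2$. The trace is then at most $2d\kappa(\C_\infty) r_T$, producing the $\frac1{\sqrt2}d\kappa(\C_\infty)r_T^{1/2}$-type term; I will loosen $\sqrt d$ to $d$ if needed.

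For the covariance term, I will again use Pinsker with the Gaussian KL divergence, written in terms of the eigenvalues $\lambda_j$ of $\C_\infty^{-1/2}(\C_\infty-\C_T)\C_\infty^{-1/2}$, which lie in $[0,1/2]$. This gives $\mathrm{KL}=\frac12\sum_j(-\lambda_j-\log(1-\lambda_j))\le\sum_j\lambda_j^2$, or alternatively a direct bound linear in $\sum_j\lambda_j$. These are controlled by $d\,r_T$ times condition-number factors $\kappa(\C_\infty)^2\kappa(\C)$; the extra $\kappa(\C)$ enters when comparing $\C_T$ with $\C$ through $\C_T\succeq T\e^{-\ldots}$ bounds, or through $\lambda_{\min}(\C_\infty)\ge\lambda_{\min}(\C)/(2\|\A_0\|)$-type comparisons. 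Combining the two terms yields the stated form.

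The main obstacle is the bookkeeping of condition numbers so that the exact constants $\frac32 d\kappa(\C_\infty)^2\kappa(\C)$ and the threshold $T\ge\lambda_{\max}(\C_\infty)\lambda_{\min}(\C)^{-1}\log(2d\kappa(\C_\infty)^2\kappa(\C))$ appear. I would first try the linear trace bound for the covariance term (total variation at most $\sum_j\lambda_j$-type bounds), which matches the full $r_T$ rather than $r_T^{1/2}$ in the first summand. I would then tune the threshold so that $\lambda_j\le1/2$ holds.
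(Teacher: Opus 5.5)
Your proof is correct. It shares the paper's skeleton: $\beta_{\X}(T)=\int\Vert P_T(x,\cdot)-\mu\Vert_{\operatorname{TV}}\,\mu(\d x)$, the triangle inequality through $P_T(0,\cdot)$, Pinsker for the mean shift, and a threshold guaranteeing $\lambda_{\min}(\C_T)\ge\lambda_{\min}(\C_\infty)/2$. The key estimates, however, are obtained differently, and more elementarily.

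\begin{itemize}
\item \emph{Decay of $\e^{-T\A_0}$.} The paper cites Corollary 5 of \cite{xu97} for a Frobenius bound $\Vert\e^{-T\A_0}\Vert_2^2\le d\kappa(\C_\infty)r_T$.
\item \emph{Covariance term.} The paper uses the total variation estimate $\frac32\Vert\C_\infty^{-1/2}(\C_\infty-\C_T)\C_\infty^{-1/2}\Vert_2$ of \cite{devroye2023total}. It then \emph{integrates} $\lambda_{\max}(\C)\Vert\e^{-s\A_0}\Vert_2^2$ over $[T,\infty)$. That integration is exactly where $\kappa(\C)$ and the second factor $\kappa(\C_\infty)$ come from, and the same integrated bound produces the stated threshold on $T$.
\item \emph{Mean term.} The paper bounds it pointwise by $\frac12\Vert x\Vert\lambda_{\min}(\C_T)^{-1/2}\Vert\e^{-T\A_0}\Vert_2$ and averages with $\E[\Vert X_0\Vert^2]\le d\lambda_{\max}(\C_\infty)$, where you use Jensen.
\end{itemize}

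Your Lyapunov/Gr\"onwall bound $\e^{-T\A_0}\C_\infty\e^{-T\A_0^\top}\preceq r_T\C_\infty$, combined with the exact identity $\C_\infty-\C_T=\e^{-T\A_0}\C_\infty\e^{-T\A_0^\top}$, makes the argument self-contained. It also removes the condition numbers altogether. The paper's route buys brevity by quoting off-the-shelf results; yours yields an essentially condition-number-free bound and needs only $r_T\le\frac12$ rather than the stated threshold.

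The ``bookkeeping'' you flag as the main obstacle is therefore not one. You only need to be \emph{below} the stated constants, and your speculation about where $\kappa(\C)$ enters can be dropped. The argument closes as follows.
\begin{itemize}
\item Conjugating your Loewner bound by $\C_\infty^{-1/2}$ gives $\lambda_j\le r_T$ directly.
\item The hypothesis on $T$ gives $r_T\le(2d\kappa(\C_\infty)^2\kappa(\C))^{-1}\le\frac12$, hence $\C_T\succeq(1-r_T)\C_\infty\succeq\frac12\C_\infty$.
\item Using $-\lambda-\log(1-\lambda)\le\lambda^2$ on $[0,\frac12]$ and Pinsker, the covariance term is at most $\frac12\sqrt{d}\,r_T$.
\item Using $\C_T^{-1}\preceq 2\C_\infty^{-1}$ in your trace, the mean term is at most $\sqrt{d/2}\,r_T^{1/2}$. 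Your cruder $\sqrt{d\kappa(\C_\infty)/2}\,r_T^{1/2}$ also suffices.
\end{itemize}
Both bounds lie below the respective summands of the statement.

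One caution: use the quadratic KL bound, or a total variation bound linear in $\sum_j\lambda_j$. A \emph{KL} bound linear in $\sum_j\lambda_j$ would only give an $r_T^{1/2}$ rate after Pinsker, not the $r_T$-linear first summand.
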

\begin{proof}
First recall that  $\C_{\infty}=\int_0^\infty \e^{-s\A_0}\C\e^{-s\A_0^\top}\d s,$ solves the Lyapunov equation
    \[-\A_0 \C_\infty-\C_\infty \A_0^\top=-\C, \]
    and thus Corollary 5 in \cite{xu97} gives for any $T>0$
    \begin{align}
        \left\Vert\e^{-T\A_0}\right\Vert^2_{2}\notag
        &\leq \kappa(\C_{\infty}) \tr\left(\exp\left(-T\lambda_{\max}(\C_\infty)^{-1}\C \right) \right)\notag
        \\
        &\leq d \kappa(\C_{\infty}) \exp\left(-T\lambda_{\max}(\C_\infty)^{-1}\lambda_{\min}(\C) \right).\label{eq: xu bound}
    \end{align}
    By Propositions 2.1 and 2.2 in \cite{mas04}, respectively Theorems 3.1, 4.1 and 4.2 in \cite{sato84}, $\mu$ and $P_T(x,\cdot)$ are infinitely divisible for any $x\in\R^d$ with respective L{\'e}vy triplets $(0,\C_{\infty}, 0)$ and $(\e^{-T\A_0}x,\C_T, 0),$ where
    \[\C_T\coloneq\int_0^T \e^{-s\A_0}\C\e^{-s\A_0^\top}\d s,\] i.e.\ they follow a normal distribution. Now for any $x\in\R^d,T\geq 0$ it holds,
    \begin{equation}\label{eq: mix triangle}
        \Vert P_T(x,\cdot)-\mu\Vert_{\operatorname{TV}}
        \leq \Vert P_T(x,\cdot)-P_T(0,\cdot)\Vert_{\operatorname{TV}}+\Vert P_T(0,\cdot)-\mu\Vert_{\operatorname{TV}},
    \end{equation}
    and by Theorem 1.1 and equation (2) of \cite{devroye2023total} and \eqref{eq: xu bound} we get
        \begin{align}
      &\Vert P_T(0,\cdot)-\mu\Vert_{\operatorname{TV}}\notag
      \\
      &\leq \frac{3}{2}\left\Vert \C_\infty^{-1/2}\int_T^\infty \e^{-s\A_0}\C\e^{-s\A_0^\top}\d s \C_\infty^{-1/2} \right\Vert_2\notag
      \\  
      &\leq \frac{3}{2}\Vert \C_\infty^{-1/2} \Vert^2\left\Vert \int_T^\infty \e^{-s\A_0}\C\e^{-s\A_0^\top}\d s \right \Vert_2 \notag
      \\ 
      &\leq \frac{3}{2}\lambda_{\min}\left(\C_\infty\right)^{-1}\int_T^\infty \left\Vert \e^{-s\A_0}\C\e^{-s\A_0^\top}\right\Vert_2\d s  \notag
      \\
      &\leq \frac{3}{2}\lambda_{\min}\left(\C_\infty\right)^{-1}\lambda_{\max}(\C)\int_T^\infty \left\Vert \e^{-s\A_0}\right\Vert^2_2\d s\notag
      \\
      &\leq \frac{3}{2}d\lambda_{\min}(\C_\infty)^{-1}\lambda_{\max}(\C)\kappa(\C_{\infty})\int_T^\infty   \exp\left(-s\lambda_{\max}(\C_\infty)^{-1}\lambda_{\min}(\C) \right)\d s\notag
      \\
      &=\frac{3}{2}d\kappa(\C)\kappa(\C_{\infty})^2   \exp\left(-T\lambda_{\max}(\C_\infty)^{-1}\lambda_{\min}(\C)\right).\label{eq: mix bound 1}
    \end{align}
    Additionally, Pinsker's inequality and classical results for the Kullback--Leibler divergence of normal distributions give together with \eqref{eq: xu bound}
    \begin{align*}
        &\Vert P_T(x,\cdot)-P_T(0,\cdot)\Vert_{\operatorname{TV}}
        \\
        &\leq \sqrt{\frac{1}{2}\operatorname{KL}(P_T(x,\cdot),P_T(0,\cdot))}
        \\
        &=\frac{1}{2}\sqrt{x^\top\e^{-T\A^\top_0}\C_T^{-1}\e^{-T\A_0}x}
        \\
        &=\frac{1}{2} \left\Vert\C_T^{-1/2}\e^{-T\A_0}x\right\Vert 
        \\
        &\leq \frac{1}{2}  \Vert x\Vert \left\Vert\C_T^{-1/2}\e^{-T\A_0}\right\Vert
        \\
        &\leq \frac{1}{2}  \Vert x\Vert \lambda_{\min}\left(\C_T\right)^{-1/2}\left\Vert\e^{-T\A_0}\right\Vert_{2}
         \\
        &\leq \frac{1}{2}  \Vert x\Vert \lambda_{\min}\left(\C_T\right)^{-1/2}\left( d\kappa(\C_{\infty}) \exp\left(-T\lambda_{\max}(\C_\infty)^{-1}\lambda_{\min}(\C) \right)\right)^{1/2}.
    \end{align*}

Furthermore the min-max Theorem implies together with \eqref{eq: xu bound}
\begin{align*}
    \lambda_{\min}\left(\C_T\right)
    &=\min_{x\in\R^d:\Vert x\Vert=1}\int_0^T x^\top\e^{-s\A_0}\C\e^{-s\A_0^\top}x\d s
    \\
    &\geq\min_{x\in\R^d:\Vert x\Vert=1}\int_0^\infty x^\top\e^{-s\A_0}\C\e^{-s\A_0^\top}x\d s- \max_{x\in\R^d:\Vert x\Vert=1}\int_T^\infty x^\top\e^{-s\A_0}\C\e^{-s\A_0^\top}x\d s
      \\
    &\geq \lambda_{\min}(\C_\infty)-\lambda_{\max}(\C)\int_T^\infty \Vert \e^{-s\A_0^\top}\Vert_2^2\d s
    \\
    &\geq \lambda_{\min}(\C_\infty)-d \kappa(\C_{\infty})\lambda_{\max}(\C)\int_T^\infty  \exp\left(-s\lambda_{\max}(\C_\infty)^{-1}\lambda_{\min}(\C) \right)\d s
    \\
    &= \lambda_{\min}(\C_\infty)-d \lambda_{\max}(\C_\infty)\kappa(\C_{\infty})\kappa(\C) \exp\left(-T\lambda_{\max}(\C_\infty)^{-1}\lambda_{\min}(\C) \right),
\end{align*}
which gives for $T\geq \lambda_{\max}(\C_\infty)\lambda_{\min}(\C)^{-1}\log(2d \kappa(\C_{\infty})^2\kappa(\C))\eqcolon T_0,$
\[\lambda_{\min}\left(\C_T\right)\geq \frac{1}{2}\lambda_{\min}(\C_\infty), \]
and thus for $T\geq T_0$ it holds 
 \begin{align}
        &\Vert P_T(x,\cdot)-P_T(0,\cdot)\Vert_{\operatorname{TV}}\notag
         \\
        &\leq   \Vert x\Vert (2\lambda_{\min}\left(\C_{\infty}\right))^{-1/2}\left( d\kappa(\C_{\infty})\exp\left(-T\lambda_{\max}(\C_\infty)^{-1}\lambda_{\min}(\C) \right)\right)^{1/2}.\label{eq: mix bound 2}
    \end{align}
    Combining \eqref{eq: mix triangle}, \eqref{eq: mix bound 1} and \eqref{eq: mix bound 2}, we obtain for $T\geq T_0$ by H{\"o}lder's inequality
    \begin{align*}
        \beta_{\X}(T)&=\int \Vert P_T(x,\cdot)-\mu\Vert_{\operatorname{TV}}\mu(\d x)
        \\
        &\leq \frac{3}{2}d\kappa(\C)\kappa(\C_{\infty})^2   \exp\left(-T\lambda_{\max}(\C_\infty)^{-1}\lambda_{\min}(\C) \right)
        \\
        &\quad+ (2\lambda_{\min}\left(\C_{\infty}\right))^{-1/2}\left( d\kappa(\C_{\infty})\exp\left(-T\lambda_{\max}(\C_\infty)^{-1}\lambda_{\min}(\C) \right)\right)^{1/2}\E[\Vert X_0\Vert^2]^{1/2}
                \\
        &\leq \frac{3}{2}d\kappa(\C)\kappa(\C_{\infty})^2   \exp\left(-T\lambda_{\max}(\C_\infty)^{-1}\lambda_{\min}(\C) \right)
        \\
        &\quad+ \frac{1}{\sqrt{2}}d\kappa(\C_{\infty}) \exp\left(-T\lambda_{\max}(\C_\infty)^{-1}\lambda_{\min}(\C)/2 \right),
    \end{align*}
    which concludes the proof.
\end{proof}

\printbibliography
\end{document}